\numberwithin{equation}{section}
\newtheorem{lemma} {Lemma} [section]
\newtheorem{proposition} [lemma] {Proposition}
\newtheorem{theorem} [lemma] {Theorem}
\newtheorem{corollary} [lemma] {Corollary}
\newtheorem{definition}[lemma] {Definition}
\newtheorem{example}[lemma] {Example}
\newtheorem{remark}[lemma]{Remark}
\renewenvironment{proof}{{\sc Proof:}}{{\hspace*{\fill} $\square$\\}}
\numberwithin{}{}
\newcommand{\thistheoremname}{}
\newtheorem{genericthm}[lemma]{\thistheoremname}
\newtheorem*{genericthm*}{\thistheoremname}
\newenvironment{namedthm*}[1]
  {\renewcommand{\thistheoremname}{#1}%
   \begin{genericthm*}}
  {\end{genericthm*}}
\def\@maketitle{%
  \newpage
  \null
  \vskip 2em%
  \begin{center}%
  \let \footnote \thanks
    {\Large\bfseries \@title \par}%
    \vskip 1.5em%
    {\normalsize
      \lineskip .5em%
      \begin{tabular}[t]{c}%
        \@author
      \end{tabular}\par}%
    \vskip 1em%
    {\normalsize \@date}%
  \end{center}%
  \par
  \vskip 1.5em}
\def\vbgs{{VB-groupoids} \ }
\def\vbg{{VB-groupoid} \ }
\def\vba{{VB-algebroid} \ }
\def\bea{\begin{eqnarray*}}
\def\eea{\end{eqnarray*}}
\def\bl{\begin{lemma}}
\def\el{\end{lemma}}
\def\br{\begin{remark}}
\def\er{\end{remark}}
\def\G{\mathcal{G}}
\def\V{\V}
\def\l{\lambda}
\renewcommand{\hom} {\mathrm{hom}}
\newcommand{\khom} {k\mbox{-}\hom}
\def\C{{C^\infty}}
\def\Ck{{C^\infty_{\khom}}}
\def\CE{CE}
\def\VE{\mathrm{VE}}
\def\VEk{{\VE_{\khom}}}
\def\toto{{\rightrightarrows}}
\def\Ga{\Gamma}
\def\Gak{{\Gamma_{\khom}}}
\def\rra{\rightrightarrows}
\def\No{\mathbb{N}_0}
\def\ase{{\ast_E}}
\def\R {\mathbb{R}}
\def\Arrow {\rightarrow}
\def\i{\iota}
\def\<{\langle}
\def\>{\rangle}
\def\V {\mathcal{V}}
\def\D{\mathcal{D}}
\def\E{\mathcal{E}}
\newcommand{\sour}        {\mathsf{s}}
\newcommand{\tar}         {{\mathsf{t}}}
\newcommand{\mult}     {{\mathsf{m}}}
\renewcommand{\1}         {\mathbf{1}}
\newcommand{\g}         {\mathfrak{g}}
\newcommand{\h}         {\mathfrak{h}}
\renewcommand{\v}       {\mathfrak{v}}
\renewcommand{\H}       {\mathcal{H}}   
\newcommand{\bG}        {\mathbb{G}}
\newcommand{\bM}        {\mathbb{M}}
\newcommand{\bA}        {\mathbb{A}}
\newcommand{\bpartial}  {\cancel{\partial}}
\newcommand{\bs}        {\mathbbm{s}}
\newcommand{\frakf}         {\mathfrak{F}}
\newcommand{\vl}       {{\mathrm{v}}}       %vertical lift
\newcommand{\Lie}       {\mathcal{L}}
\newcommand{\pr}        {\mathrm{pr}}
\newcommand{\sk}        {\mathrm{sk}}
\newcommand{\spl}        {\mathrm{spl}}
\newcommand{\ext}       {\mathrm{ext}}
\newcommand{\f}         {\frakf}
\newcommand{\T}         {\mathbb{T}}
\newcommand{\Z}         {\mathbb{Z}}
\def\muE{\mu_{E}}
\def\muC{\mu_{C}}
\newcommand{\comment}{}
\begin{document}

\title{Van Est isomorphism for homogeneous cochains}
\author{Alejandro Cabrera%
  \thanks{Electronic address: \texttt{acabrera@labma.ufrj.br};}}
\affil{Departamento de Matematica Aplicada, Instituto de Matematica\\
Universidade Federal do Rio de Janeiro\\
 Caixa Postal 68530, Rio de Janeiro RJ 21941-909, Brasil.}

\author{Thiago Drummond%
  \thanks{Electronic address: \texttt{drummond@im.ufrj.br}}}
\affil{Departamento de Matematica, Instituto de Matematica\\
Universidade Federal do Rio de Janeiro\\
 Caixa Postal 68530, Rio de Janeiro RJ 21941-909, Brasil.}

\date{\today}

\maketitle

\begin{abstract}
VB-groupoids define a special class of Lie groupoids which carry a compatible linear structure. In this paper, we show that their differentiable cohomology admits a refinement by considering the complex of cochains which are $k$-homogeneous on the linear fiber. Our main result is a Van Est theorem for such cochains. We also work out two applications to the general theory of representations of Lie groupoids and algebroids. The case $k=1$ yields a Van Est map for representations up to homotopy on 2-term graded vector bundles. Arbitrary $k$-homogeneous cochains on suitable VB-groupoids lead to a novel Van Est theorem for differential forms on Lie groupoids with values in a representation.
\end{abstract}

\tableofcontents

\section{Introduction}
The Van Est theorem \cite{vE1, vE2} is a classical result relating the differentiable cohomology associated to a Lie group with the underlying Lie algebra cohomology. More precisely, given a Lie group $G$ with Lie algebra $\g$, the Van Est map is a chain map
$$\VE: C^p(G)=\{f\in \C(G^p): f(g_1,..,g_p)=0 \text{ if } g_i=e\} \longrightarrow CE(\g) = \Lambda^p \g^*$$ taking (normalized) differentiable $p$-cochains on $G$ to Lie algebra $p$-cochains. It is defined by 
\begin{equation}\label{1}
\VE(f)(u_1,..,u_p)= \sum_{\sigma \in S_p} sgn(\sigma) R_{u_{\sigma(1)}} \dots R_{u_{\sigma(p)}} f,
\end{equation}
where $R_u: C^p(G) \to C^{p-1}(G)$ is the operator which differentiates $f(\cdot, g_2, \dots, g_p)$ at the unit $e$ with respect to the right-invariant vector field corresponding to $u$. The map $\VE$ can be seen as a model for the pull-back of functions along the projection of the universal $G$-bundle $EG\to BG$. The Van Est theorem then states that, if $G$ is (topologically) $p_0$-connected, the map induced by $\VE$ in cohomology is an isomorphism for $p\leq p_0$ and injective for $p=p_0+1$.
In the setting of Lie groupoids, the Van Est Theorem was first studied by A. Weinstein and P. Xu \cite{XW} for $p_0=1$ and later generalized for arbitrary degrees by M. Crainic \cite{Cr} (see also the more recent work of D. Li-Bland and E. Meinrenken, \cite{L-BM}). 

In this paper, we provide a refinement of this theorem for a particular class of Lie groupoids endowed with a compatible linear structure, called VB-groupoids \cite{Prad2}(see also \cite{BCdH, GM08, GM10} and references therein). In this case, the linear structure allows us to refine the Van Est theorem by looking at \emph{homogeneous cochains} and  we are able to derive several interesting applications from this general result.

To illustrate our approach, we shall examine here a simple situation involving a Lie group $G$ and a linear representation $\rho_G: G \to Aut(V)$ on a (finite dimensional) real vector space $V$. The associated complex of differentiable cochains for $G$ with values in $V$ is $C^p(G,V) = \{ f: G^p \to V: f(g_1,..,g_p)=0 \ if \ g_i=e\}$ with a differential $\delta: C^p(G,V) \to C^{p+1}(G,V)$ which encodes $\rho_G$ (see Example \ref{ex:repG} below for an explicit formula). Infinitesimally, associated to the induced Lie algebra representation $\rho_\g: \g \to End(V)$, we have the Chevalley-Eilenberg complex of Lie algebra cochains with values in $V$, namely $CE^p(\g,V) = \Lambda^p \g^* \otimes V$. In this setting, there exists a natural analogue of the Van Est map
\begin{equation}\label{2}
\Psi_\rho: C^p(G,V) \to  \CE^p(\g,V).
\end{equation}
%%%%%
How to prove a Van Est theorem for $\Psi_\rho$? 
There are two approaches: the first one is to try re-prove the statement from scratch mimicking the proof of the standard case. % without coefficients. %(including the topological parts of the arguments). 
The second one is to \emph{deduce} the desired result from the known Van Est theorem for {\em Lie groupoids} by relating the map \eqref{2} to the Van Est map $\VE$ for the action groupoid $\V=V^* \rtimes G$. It is the second approach we pursue in this paper.

%The Lie algebra corresponding to $G$ is $\h = V^* \rtimes \g$ and the ordinary Van Est map gives us a map $$\VE: C^p(G) \to \Lambda^p \h^*.$$

To relate $\VE$ and $\Psi_\rho$, notice that both $\V$ and its space of $p$-composable arrows $B_p\V$ define vector bundles $\V \to G$ and $B_p\V \to G^p$, respectively. (Actually, $B_p\V$ is isomorphic to $V^* \times G^p$.) One can then show that the differentiable cochains $f \in C^{\infty}(B_p\V)$ which are fiberwise $k$-homogeneous define a subcomplex 
\[ 
C^p_{\khom}(\V) \subset C^p(\V).
\]
Analogously, the Lie algebroid $\v= V^* \rtimes \g$ of $\V$ also defines a trivial vector bundle $\v \to \g$ and the fiberwise $k$-homogeneous cochains define a subcomplex 
$$
CE^p_{\khom}(\v) \subset \Lambda^p \v^*.
$$
The key point is that $\VE$ preserves $k$-homogeneous cochains, thus restricting to a map
$$ \VE_{\khom}:  C^p_{\khom}(\V) \to CE^p_{\khom}(v)$$
which, from a simple homological algebra argument (see Section \ref{sec:vanest} below), is an isomorphism (resp. injective) in cohomology whenever $\VE$ is. Finally, to obtain the Van Est theorem for $V$-valued cochains one has to verify that
$$ H^p(C^\bullet_{1\mbox{-}\hom}(\V)) \simeq H^p(C^\bullet(G,V)), \ \ H^p(CE^\bullet_{1\mbox{-}\hom}(\v)) \simeq H^p(\Lambda^\bullet \g^* \otimes V), \ \ \VE_{1\mbox{-}\hom} \simeq \Psi_\rho.$$

In this paper, we follow the same reasoning but with $\V$ replaced by an arbitrary $VB$-groupoid. The main arguments follow directly as above but non-trivial computational effort needs to go into the last ingredient of the argument, namely, into relating the complexes of homogeneous (groupoid and algebroid) cochains to certain complexes already introduced in the literature from different perspectives. In particular, we shall obtain explicit formulae for the underlying Van Est maps. 

We work out two applications: in the first, we deduce a Van Est theorem for representations up to homotopy in 2-term graded vector bundles \cite{AC2, AC3,GM08,GM10} by looking at $1$-homogeneous cochains and generalizing the case of $\rho$ above, recovering results from \cite{AS}. In the case of the \emph{adjoint representation}, our approach can be seen as the realization of the original idea proposed in \cite{CrMo} for showing a rigidity result for certain proper groupoids. (This last result was also proven in \cite{AS} using different methods.) The second application provides a new Van Est theorem for differential forms on Lie groupoids with coefficients in a representation, generalizing the work of \cite{AC} on the Bott-Shulmann complex and \cite{CSS} on Spencer operators. It is interesting to notice that, in this second application, another idea is incorporated (which has its roots in \emph{supergeometry} and was used in a Lie-theoretic context by Mehta \cite{Mehta}): forms in $\Lambda^k V^*$ are $k$-homogeneous functions on $V^k$. 
For this application, we shall need the refinement of the Van Est theorem in its full extent (i.e. for $k$-homogeneous cochains, where $k$ is arbitrary). Even in the particular case of differential forms with trivial coefficients, our proof of the corresponding Van Est theorem is new and can be seen as illustration of the usefulness of homogeneous cochains.

%As a consequence, the combination of our main two ideas allows us to derive non-trivial results about pull-back of forms with coefficients along the projection of the classifying $G$-bundle $EG\to BG$  by means of the classical Van Est theorem, without having to re-prove the topological parts of the statement.
%\comment{I think one should be careful here. I don't know how one relates the representation $\Omega(B_\bullet\G, t^*C)$ with differential forms on $BG$ with coefficients. There is the problem of understanding how the representation $C$ defines a ``local system of coefficients'' in the geometric realization...Maybe one can rephrase it as:... One should be able to prove that the complex of differential forms with coefficients studied here defines differential forms on $BG$ with coefficients. In this case, our Van Est map should be seen as a model for the pull-back of forms with coefficients along the projection $EG \to BG$.}

%It is interesting to note that this question originally appeared in \cite{CrMo} related to the rigidity conjecture regarding the deformation cohomology of Lie algebroids. The conjecture was later proved by C. Arias Abad and F. Schaetz  \cite{AS} using an alternative approach. We here show how to obtain the rigidity conjecture following the original strategy outlined in \cite{CrMo}.

\paragraph{Outline of the paper.} 
\begin{itemize}
\item In Section \ref{sec:homcoch} we set up some notation, introduce homogeneous cochains on VB-groupoids and algebroids and provide our main result: the corresponding refinement of the Van Est theorem.
\item In Section \ref{sec:ruth}, we speciallize to $1$-homogeneous cochains and deduce a Van Est result for representations up to homotopy. Along the way, we mention how this argument can be used to provide an alternative proof of the rigidity conjecture as originally proposed in \cite{CrMo}.
\item In Section \ref{sec:forms}, by means of $k$-homogeneous cochains in suitable VB-groupoids and algebroids, we prove a Van Est theorem for differential forms with coefficients in a representation.
\end{itemize}

To keep the main text as simple as possible, we decided to postpone to the Appendix some of the more technical or computational parts of the arguments in Section \ref{sec:forms}. Most of the explicit formulas contained in the Appendix follow from extensions of known lift properties of vector fields to Lie groupoids (see \cite{Mac-Xu, Mac-Xu3}). We would like to mention that part of this paper grew out of the project of understanding the Lie theory of multiplicative tensors on Lie groupoids \cite{bd}.

\bigskip
\noindent{\bf Acknowledgments.} We would like to thank H. Bursztyn and O. Brahic for useful discussions. A. C. would also like to thank R. Mehta for his insightful ideas in the early stages of this work.

\section{Homogeneous cochains and the Van Est map for VB-groupoids}\label{sec:homcoch}
In this section, we present a refinement of groupoid and algebroid cohomology theory for \vbgs and VB-algebroids by considering $k$-homogeneous cochains. We also show that an analogue of the Van Est theorem holds for such homogeneous cochains.

\subsection{Homogeneous functions on vector bundles}
Given any vector bundle $\pi: V\to B$, fiberwise multiplication by non-negative scalars $h: \R_+ \times V \Arrow V$ defines an action of the multiplicative monoid $\R_+$ which we shall call \textit{the homogeneous structure of $V \Arrow B$}. Following \cite{GR}, we recall that the homogeneous structure completely characterizes the underlying vector bundle structure and that, in particular, a smooth map between the total spaces defines a vector bundle morphism if and only if it commutes with the underlying $\R_+$-actions. (See \cite{BCdH} for applications of these ideas in a Lie theoretic context.)

For each $k\in \mathbb{N}$, we consider
$$
\Ck(V) := \{f\in \C(V): h_\l^*f = \l^k f, \ \forall \l \geq 0\},
$$
the set of \text{fiberwise $k$-homogeneous functions}. Note that
$$
C^{\infty}_{0\mbox{-}\hom}(V) = \{f \in \C(V): \exists f_0 \in \C(B), \,  f = f_0\circ \pi\} \cong C^\infty(B).
$$
Multiplication of functions gives a map
$
C_{\khom}^{\infty}(V) \times C_{k'\mbox{-}\hom}^\infty(V) \to C_{k+k'\mbox{-}\hom}^\infty(V)
$
and, in particular, each $C^{\infty}_{\khom}(V)$ is a $C^{\infty}(B)$-module. In fact,
$
C^{\infty}_{\khom}(V) \cong \Gamma(B,S^k V^*),
$
for the symmetric algebra bundle $S^{k} V^*\to B$. The isomorphism $\Ga(B,V^*) \cong C^{\infty}_{1\mbox{-}\hom}(V)$ takes a section $\mu \in \Gamma(B, V^*)$ to the fiberwise-linear function $\ell_{\mu} \in C^{\infty}_{1\mbox{-}\hom}(V)$ given  by
$$
\ell_{\mu}(v) = \<\mu(b), v\>, \,\,\, v \in V_b, \, b \in B.
$$
The $k$-th derivative along the fiber defines a projection $P_{\khom}: C^{\infty}(V) \to \Ck(V)$, 
\begin{equation}\label{proj:hom}
P_{\khom}(f) = \frac{1}{k!}\frac{d^k}{d\l^k}(h_\l^*f)|_{\l=0}.
\end{equation}
If $(x, \xi_1, \dots, \xi_n)$ are trivializing coordinates on $V$, then
$$
P_{\khom} (f) (x,\xi) = \sum_{k_1 +\dots+ k_n=k} \frac{1}{k_1!\dots k_n!}\frac{\partial^k f}{\partial \xi_1^{k_1}\dots \partial \xi_{n}^{k_n}} (x,0) \,\xi_1^{k_1} \dots \xi_n^{k_n}.
$$

\subsection{Homogeneous groupoid cochains}
Let $\G \rightrightarrows M$ be a Lie groupoid with source and target maps, $\sour, \tar: \G \Arrow M$, unit $\mathbf{1}: M \to \G$, inversion $\iota: \G \to \G$ and multiplication $\mult: \G_{\,\,\sour}\!\!\times_\tar \G \to \G$. We denote by $B_p\G$ the manifold of composable $p$-tuples ($B_0\G=M)$. The nerve of $\G$ is the simplicial manifold whose space of $p$-simplices is $B_p\G$ with the simplicial structure given by the face maps: $\partial_i : B_p\G \to B_{p-1}\G$, $i=0,\dots, p$, defined by
$$
\partial_i(g_1, \dots, g_p) =
\begin{cases}
(g_2, \dots, g_p), & \text{ if } i=0,\\
(g_1, \dots, g_{i-1}, g_{i}g_{i+1},  g_{i+2}, \dots, g_{p}), & \text{ if } 1 \leq i \leq p-1,\\
(g_1, \dots, g_{p-1}), & \text{ if } i=p,
\end{cases}
$$
and the degeneracy maps: $s_i: B_{p-1}\G \to B_p\G$, $i=0,\dots, p-1$, defined by
$$
s_i(g_1,\dots, g_{p-1}) = (g_1, \dots, g_{i}, 1_{\tar(g_{i+1})}, g_{i+1}, \dots, g_{p-1}).
$$
For $p=1$, $\partial_0 = \tar, \, \partial_1 = \sour$ and $s_0 = \mathbf{1}$.

The nerve defines a functor $B_{\bullet}$ from the category of Lie groupoids to the category of simplicial manifolds. For a groupoid morphism $\phi: \G_1 \to \G_2$, the morphism $B\phi: B\G_1 \to B\G_2$ is defined by $B_p\phi(g_1, \dots, g_p) = (\phi(g_1), \dots, \phi(g_p))$.

The space of \textit{(normalized) p-cochains} $C^p(\G)$ on $\G$ consists of smooth functions $f: B_p\G \to \R$ such that $s_i^* f=0$, for $i=0,\dots, p-1$. These define a cochain complex with differential
$
\delta : C^{p-1}(\G)  \to C^{p}(\G) 
$
defined by
\begin{equation}\label{G_diff}
\delta = \sum_{i=0}^{p} (-1)^{i} \partial_i^*.
\end{equation}
The \emph{differentiable cohomology} of $\G$ is the cohomology of the complex $(C^{\bullet}(\G), \delta)$ and we shall denote it by $H^{\bullet}(\G)$.
For $f_1 \in C^p(\G), f_2 \in C^{p'}(\G)$, the cup product $f_1 \star f_2 \in C^{p+p'}(\G)$ is defined by
\begin{equation}\label{cup_prod}
(f_1 \star f_2)(g_1, \dots, g_{p+p'})= f_1(g_1, \dots, g_p)f_2(g_{p+1}, \dots, g_{p+p'}).
\end{equation}
It defines an algebra structure on $C^{\bullet}(\G)$ which passes to cohomology due to the Leibniz formula
$$
\delta(f_1 \star f_2) = \delta(f_1)\star f_2 + (-1)^p f_1 \star \delta(f_2).
$$

In the following, we shall investigate how the differentiable cohomology of a \emph{VB-groupoid} interacts with its underlying homogeneous structure. 

\begin{definition}
A VB-groupoid is given by a commutative square
\begin{equation}\label{vb_diag1}
\xymatrix{
\V \ar@<-3pt>[d] \ar@<3pt>[d] \ar[r] & \G \ar@<-3pt>[d]\ar@<3pt>[d]\\
E \ar[r] & M,\\
}\end{equation}
where the left and right sides are Lie groupoids and the top and bottom sides are vector bundles satisfying the following compatibility condition: 
\begin{equation}
\label{vb_diag2}
\xymatrix{
\V \ar@<-3pt>[d] \ar@<3pt>[d] \ar[r]^{h_\l^{\scriptscriptstyle \G}} & \V \ar@<-3pt>[d]\ar@<3pt>[d]\\
E \ar[r]_{h_\l} & E,\\
}\end{equation}
defines a Lie grupoid morphism for each $\lambda \in \R_+$, where $h_\l^\G: \V \to \V$, $h_\l: E \to E$ are the homogeneous structures corresponding to $\V \to \G$, $E \to M$, respectively. We shall denote the structure maps of $\V\rightrightarrows E$ by $\sour_\V,\tar_\V, \mathbf{1}_\V, \iota_\V, m_\V$.
\end{definition} 

Instead of looking at the homogeneous structure, VB-groupoids can be alternatively defined by focusing on the fiberwise defined sum (see \cite{GM10}). Our choice of definition comes from \cite{BCdH} where the two definitions are shown to be equivalent (see Theorem 3.2.3 therein). 

VB-groupoids have found several applications in the latter years (\cite{BC,BCdH,bd,GM10,Mac-Xu, Mac-Xu3} just to mention a few). Natural examples of VB-groupoids are given by the tangent $T\G\rightrightarrows TM$ and cotangent $T^*\G\rightrightarrows Lie(\G)^*$ groupoids, which provide intrinsic versions of the adjoint and coadjoint representations (up to homotopy, see Section \ref{sec:ruth} below)  of a Lie groupoid $\G$. Ordinary representations also provide examples of VB-groupoids, as we shall see in detail in Example \ref{ex:repG} below. 

From now on, we shall focus on introducing \emph{homogeneous cochains} on a VB-groupoid and to study their properties with respect to the Van Est map, while having in mind the applications to be developed in Sections \ref{sec:ruth} and \ref{sec:forms}.
The first result states that $B_\bullet$ restricts to a functor from VB-groupoids to simplicial vector bundles.

\begin{lemma}\label{p_vector}
Let $\V \toto E$ be a VB-groupoid over $\G \toto M$. The space of $p$-composable arrows $B_p\V$ is a vector bundle over $B_p\G$. Moreover, the face and degeneracy maps are all vector bundle maps.
\end{lemma}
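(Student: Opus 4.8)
The plan is to unwind the definitions and reduce everything to the characterization of vector bundle structures via homogeneous structures recalled from \cite{GR}. Concretely, I would first observe that $B_p\V$ is a submanifold of $\V^p$ (the $p$-fold Cartesian product) cut out by the composability conditions $\sour_\V(v_i) = \tar_\V(v_{i+1})$, and that this submanifold is a fibered product over the corresponding conditions in $\V$. To put a vector bundle structure on $B_p\V \to B_p\G$, the cleanest route is to exhibit the fiberwise scalar multiplication directly: define $h_\l^{B_p\G} : B_p\V \to B_p\V$ by $h_\l^{B_p\G}(v_1,\dots,v_p) = (h_\l^{\scriptscriptstyle\G}(v_1),\dots,h_\l^{\scriptscriptstyle\G}(v_p))$. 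The first thing to check is that this is well-defined, i.e. that it preserves composability; but this is exactly the content of diagram \eqref{vb_diag2}, since $h_\l^{\scriptscriptstyle\G}$ commutes with $\sour_\V$ and $\tar_\V$ up to the base map $h_\l$, so $\sour_\V(h_\l^{\scriptscriptstyle\G}(v_i)) = h_\l(\sour_\V(v_i)) = h_\l(\tar_\V(v_{i+1})) = \tar_\V(h_\l^{\scriptscriptstyle\G}(v_{i+1}))$.

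Next I would verify that $(h_\l^{B_p\G})_{\l \in \R_+}$ genuinely is a homogeneous structure on $B_p\V \to B_p\G$ in the sense of \cite{GR}: it is a smooth action of the multiplicative monoid $\R_+$, it covers the identity at $\l = 1$, and it is fiberwise over $B_p\G$ (each $h_\l^{\scriptscriptstyle\G}$ covers $\mathrm{id}_\G$, hence the product covers $\mathrm{id}_{B_p\G}$). One then invokes the equivalence between homogeneous structures and vector bundle structures (stated in the excerpt) to conclude that $B_p\V \to B_p\G$ is a vector bundle, with the caveat that one should check the regularity/properness conditions in the \cite{GR} characterization are inherited from those for $\V \to \G$; since $B_p\V \hookrightarrow \V^p$ is a closed embedding compatible with the product homogeneous structure, this is a formal consequence. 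Alternatively — and this is probably the slicker write-up — one can note that $B_p\V \to B_p\G$ is the pullback of the vector bundle $\V^p \to \G^p$ (which is obviously a vector bundle) along the inclusion $B_p\G \hookrightarrow \G^p$, once one checks that $B_p\V = B_p\G \times_{\G^p} \V^p$ as sets and that this identification is by a diffeomorphism; the fibered-product description makes the vector bundle structure immediate and avoids re-deriving the \cite{GR} hypotheses.

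For the second assertion, that the face and degeneracy maps $\partial_i : B_p\V \to B_{p-1}\V$ and $s_i : B_{p-1}\V \to B_p\V$ are vector bundle morphisms over the corresponding maps of $\G$, I would again use the homogeneous-structure criterion: a smooth map between total spaces of vector bundles is a vector bundle morphism if and only if it intertwines the two homogeneous structures. So it suffices to check, for each $i$, that $\partial_i \circ h_\l^{B_p\G} = h_\l^{B_{p-1}\G} \circ \partial_i$ and likewise for $s_i$. For $\partial_0$ and $\partial_p$ this is trivial (they just drop a factor). For the middle face maps $\partial_i$, $1 \le i \le p-1$, the only nontrivial ingredient is that $h_\l^{\scriptscriptstyle\G}(m_\V(v,w)) = m_\V(h_\l^{\scriptscriptstyle\G}(v), h_\l^{\scriptscriptstyle\G}(w))$, which is precisely the statement that \eqref{vb_diag2} is a groupoid morphism (it respects multiplication $m_\V$). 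For the degeneracies $s_i$, one needs $h_\l^{\scriptscriptstyle\G}(\mathbf{1}_\V(\tar_\V(v))) = \mathbf{1}_\V(\tar_\V(h_\l^{\scriptscriptstyle\G}(v)))$, which again follows from \eqref{vb_diag2} respecting units $\mathbf{1}_\V$ together with the compatibility of $h_\l^{\scriptscriptstyle\G}$ and $\tar_\V$.

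The main obstacle, such as it is, is not conceptual but bookkeeping: one must be careful that $B_p\V$ is a manifold at all (this is where one uses that $\sour_\V$, or equivalently $\sour_\V$ restricted to each fiber together with the submersion $\sour : \G \to M$, is a submersion, so the iterated fibered products are transverse and smooth) and that the product homogeneous structure on $\V^p$ restricts to $B_p\V$ — both of which, as noted, reduce to the compatibility square \eqref{vb_diag2} plus the standard fact that the nerve $B_p\G$ is a smooth manifold. I expect the cleanest exposition to phrase the whole proof through the pullback/fibered-product description, relegating the intertwining-of-homogeneous-structures checks to one line each.
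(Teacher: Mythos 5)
Your primary argument --- realizing $B_p\V$ as a submanifold of $\V^p$, restricting the product homogeneous structure to it, checking invariance via the fact that \eqref{vb_diag2} is a groupoid morphism, and then verifying that the face and degeneracy maps intertwine the homogeneous structures (which for the middle faces reduces to $h_\l^{\scriptscriptstyle\G}$ respecting $m_\V$) --- is correct and is essentially the paper's proof.

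However, the alternative you advertise as ``probably the slicker write-up'' is wrong, and since you seem to prefer it for the final exposition it is worth stopping you here. The identification $B_p\V = B_p\G \times_{\G^p} \V^p$ fails in general: the right-hand side consists of \emph{all} tuples $(v_1,\dots,v_p)\in\V^p$ whose base arrows $(g_1,\dots,g_p)$ are composable in $\G$, whereas membership in $B_p\V$ additionally requires $\sour_\V(v_i)=\tar_\V(v_{i+1})$ in $E$, a genuinely stronger condition whenever $E\to M$ has positive rank. (Take $\V=T\G$: then $B_2\V=T(B_2\G)$ demands $d\sour(X_1)=d\tar(X_2)$, which the naive pullback does not.) So $B_p\V$ is a \emph{proper} subbundle of the restriction $\V^p|_{B_p\G}$, not the full pullback, and the ``immediate'' vector bundle structure you hoped to import from the pullback is not available; you must argue, as in your first route and as the paper does, that this subset is a smooth submanifold invariant under the homogeneous structure of $\V^p\to\G^p$, and then invoke the characterization from \cite{GR}. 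Keep the homogeneous-structure argument and drop the pullback shortcut.
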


\begin{proof}
Consider $\V^p= \V \times \dots \times \V$ as a vector bundle over $\G^p$. We shall present $B_p\V$ as a subbundle of $\V^p$ restricted to $B_p\G \subset \G^p$. It follows from the commutativity of \eqref{vb_diag1} that  $B_p\V$ projects onto $B_p\G$. As $B_p\V$ is a smooth submanifold of $\V^p$, it remains to check that it is invariant by the homogeneous structure of the vector bundle $\V^p \to \G^p$ (see \cite{GR}). This is a straightforward consequence of the fact that \eqref{vb_diag2} is a groupoid morphism. The statement regarding the face and degeneracy maps follows now from the fact that the multiplication $m_\V: B_2\V \to \V$ is a vector bundle map (see also \cite{BCdH}).
\end{proof}

Note that the homogeneous structure $h_\l^{B_p\G}: B_p\V \to B_p\V$ of the vector bundle $B_p\V \to B_p\G$ satisfies
$$
B_p h_\l^\G = h_\l^{B_p\G}.
$$

It is now a straightforward consequence of Lemma \ref{p_vector} that homogeneous cochains define a subcomplex of the differentiable cohomology of $\V$.

\begin{proposition}
Let $\V \rra E$ be a VB-groupoid. If $P_{\khom}^{\G,p}: \C(B_p\V) \to \Ck(B_p\V)$ is the projection \eqref{proj:hom} induced by $h^{B_p\G}_\l$, then
$$
P_{\khom}^{\G,p+1} \circ \delta = \delta \circ P^{\G,p}_{\khom}.
$$
In particular, 
$$
\delta ( \Ck(B_p\V) ) \subset \Ck(B_{p+1}\V).
$$
\end{proposition}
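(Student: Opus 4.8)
The plan is to show that the differential $\delta = \sum_{i=0}^{p+1}(-1)^i \partial_i^*$ commutes with the projection $P_{\khom}$ onto $k$-homogeneous functions, and then the inclusion $\delta(\Ck(B_p\V)) \subset \Ck(B_{p+1}\V)$ follows immediately: if $f \in \Ck(B_p\V)$ then $P_{\khom}^{\G,p}(f) = f$, hence $\delta f = \delta P_{\khom}^{\G,p}(f) = P_{\khom}^{\G,p+1}(\delta f)$, which is manifestly $k$-homogeneous. So the whole content is the commutation identity.

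First I would reduce the commutation with $\delta$ to the commutation of each $\partial_i^*$ with the homogeneity projection. Recall from \eqref{proj:hom} that $P_{\khom}$ is built entirely out of the pullbacks $h_\l^*$ by the homogeneous structure maps (together with a $\l$-derivative at $\l=0$, which is a linear operation that commutes with any pullback by a map independent of $\l$). So it suffices to prove, for each face map $\partial_i: B_{p+1}\V \to B_p\V$, that
\begin{equation*}
\partial_i \circ h_\l^{B_{p+1}\G} = h_\l^{B_p\G} \circ \partial_i,
\end{equation*}
because then $\partial_i^* \circ (h_\l^{B_p\G})^* = (h_\l^{B_{p+1}\G})^* \circ \partial_i^*$, and applying $\tfrac{1}{k!}\tfrac{d^k}{d\l^k}|_{\l=0}$ to both sides gives $\partial_i^* \circ P_{\khom}^{\G,p} = P_{\khom}^{\G,p+1}\circ \partial_i^*$. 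Summing over $i$ with signs yields $\delta \circ P_{\khom}^{\G,p} = P_{\khom}^{\G,p+1}\circ \delta$.

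The remaining identity $\partial_i \circ h_\l^{B_{p+1}\G} = h_\l^{B_p\G}\circ \partial_i$ is exactly the statement that each face map $\partial_i$ is a vector bundle morphism covering the corresponding face map downstairs, which is precisely what Lemma \ref{p_vector} gives us (via the characterization in \cite{GR} that a smooth map is a VB-morphism iff it intertwines the homogeneous structures, together with the observed identity $B_p h_\l^\G = h_\l^{B_p\G}$). For $0 < i < p+1$ this ultimately comes down to multiplicativity of $h_\l^\G$, i.e. the fact that \eqref{vb_diag2} is a groupoid morphism so $h_\l^\G(gg') = h_\l^\G(g)h_\l^\G(g')$; for $i=0$ and $i=p+1$ it is immediate from $h_\l^{B_{p+1}\G}$ acting coordinatewise. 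So one simply cites Lemma \ref{p_vector}.

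I do not expect a genuine obstacle here; the proposition is a formal corollary of Lemma \ref{p_vector}. The only mild subtlety to state carefully is that $P_{\khom}$ is natural with respect to pullback along any VB-morphism, i.e. for a VB-morphism $\phi: W_1 \to W_2$ one has $\phi^* \circ P_{\khom}^{W_2} = P_{\khom}^{W_1} \circ \phi^*$; this is the elementary fact that $\phi^* h_\l^{W_2,*} = h_\l^{W_1,*}\phi^*$ followed by the $\l$-derivative, and it is worth isolating as a one-line observation since it is the engine of the argument. Applying this with $\phi = \partial_i$ and summing with signs finishes the proof.
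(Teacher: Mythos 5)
Your argument is correct and is precisely the one the paper intends: the proposition is stated as a ``straightforward consequence of Lemma \ref{p_vector}'', and your proof simply makes explicit that each face map is a VB-morphism, hence intertwines the homogeneous structures, so that $\partial_i^*$ commutes with $P_{\khom}$ and summing with signs gives the claim. No gap; nothing further needed.
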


Thus, for a \vbg $\V \toto E$, we define natural subcomplexes of $(C^{\bullet}(\V), \delta)$ by considering the set of fiberwise $k$-homogeneous functions,
$$
C^{\bullet}_{\khom}(\V): =  \Ck(\V^{(\bullet)}) \text{ and }  H^{\bullet}_{\khom}(\V) = H(C^{\bullet}_{\khom}(\V)).
$$ 

\begin{remark}\em
For $k=0$, $C^{\bullet}_{0\mbox{-}\hom} (\V) \simeq C^\bullet(\G)$ and the cup product \eqref{cup_prod} on $C^\bullet(\V)$ induces a right $C^{\bullet}(\G)$-module (resp. $H^\bullet(\G)$-module) structure on $C^{\bullet}_{\khom}(\V)$ (resp. $H^\bullet_{\khom}(\V)$).
\end{remark}

\begin{example}\label{ex:repG}
Let $C \to M$ be a (left) representation of the Lie groupoid $\G \toto  M$. The vector bundle $\V=\tar^*C^* \to \G$ carries a VB-groupoid structure $\tar^* C^* \toto C^*$ defined by
\begin{align*}
& \sour_\V(g, \xi) = \Delta_g^*(\xi) , \,\,\, \, \tar_\V(g,\xi) = \xi\\ 
& \iota_\V(g,\xi) = (g^{-1}, \Delta_g^*(\xi)), \,\, \mathbf{1}_\V(\xi)= (\mathbf{1}_{\pi(\xi)}, \xi) \,\text{ and } \, \mult_\V((g,\xi_1), (h,\xi_2)) = (gh, \xi_1),
\end{align*}
where $\Delta_g: C_{\sour(g)} \to C_{\tar(g)}$ is the action of $g \in \G$. Note that $\tar^*C^* = C^* \rtimes \G$, the action groupoid for the adjoint action of $\G$ on $C^*$. As vector bundles over $B_p\G$, one has that $B_p(\tar^* C^*) = \tar_p^*\,C^*$
where $\tar_p: B_p\G \to M$ is given by $\tar_p(g_1, \dots, g_p) = \tar(g_1)$ and the isomorphism is given by
$
((g_1, \xi_1), \dots, (g_p, \xi_p)) \mapsto ((g_1, \dots, g_p), \xi_1).
$
In particular,
$$
C_{1\mbox{-}\hom}^p(\V) \cong \Gamma(B_p\G, \tar_p^*\,C).
$$
The right $C^\bullet(\G)$-module structure on $C_{1\mbox{-}\hom}^\bullet(\V)$ corresponds to a right module structure on $\Gamma(B_\bullet\G, \tar^*_\bullet C)$ given by
\begin{equation}\label{star_prod}
(\phi \star f)(g_1, \dots, g_{p+p'}) =  \phi(g_1,\dots, g_p) \, f(g_{p+1}, \dots, g_{p+p'}), \, f \in C^{p'}(\G), \,\phi \in \Gamma(B_p\G, \tar_p^*C).
\end{equation}
Moreover, the differential on $C_{1\mbox{-}\hom}^{\bullet}(\V)$ corresponds to the differential on $\Gamma(B_{\bullet}\G, \tar_\bullet^*\,C)$ given by
$$
(\delta \phi)(g_1, \dots, g_{p+1}) = \Delta_{g_1}(\phi(g_2, \dots, g_p)) + \sum_{i=1}^{p-1} (-1)^i \phi(g_1, \dots, g_ig_{i+1}, \dots, g_p) + (-1)^p \phi(g_1,\dots, g_{p-1}).
$$
Hence, as $H^\bullet(\G)$-modules, $H^{\bullet}_{1\mbox{-}\hom}(\V) \cong H^\bullet(\G, C)$,
the cohomology of $\G$ with coefficients on the representation $C$ (see \cite{Cr}). %***\marginpar{are the conventions the same as in \cite{Cr}?}).
More generally, $H^{\bullet}_{\khom}(\V) \cong H^{\bullet}(\G, S^kC)$.
\end{example}

\subsection{Homogeneous algebroid cochains}
Given a VB-groupoid $\V \toto E$, its Lie algebroid $\v \to E$ inherits the structure of a VB-algebroid (see \cite{BCdH} and references therein). As for VB-groupoids, we take our working definition from \cite{BCdH}.

\begin{definition}
A VB-algebroid is given by a commutative square
\begin{equation}\label{vba_diag1}
\xymatrix{
\v \ar[d] \ar[r] & \g \ar[d]\\
E \ar[r] & M,\\
}\end{equation}
where the left and right sides are Lie algebroids and the top and bottom sides are vector bundles satisfying the following compatibility condition: 
\begin{equation}
\label{vba_diag2}
\xymatrix{
\v \ar[d] \ar[r]^{h_\l^{\scriptscriptstyle \g}} & \v \ar[d]\\
E \ar[r]_{h_\l} & E,\\
}\end{equation}
defines a Lie algebroid morphism for each $\lambda \in \R_+$, where $h_\l^\g$, $h_\l$ are the homogeneous structures of the vector bundles $\v \to \g$ and $E \to M$, respectively. 
\end{definition}

Parallel to VB-groupoids, VB-algebroids together with Lie theory for VB-objects have found several applications in the latter years (again, we list a just few of the available references \cite{BC,BCdH,bd,GM08,  Mac-Xu, Mac-Xu3}). The tangent $TA\to TM$ and the cotangent lift   $T^*A \to A^*$ define examples of VB-algebroids corresponding to $T\G$ and $T^*\G$ when $A=Lie(\G)$,  providing intrinsic versions of the adjoint and coadjoint representations (up to homotopy, see Section \ref{sec:ruth} below)  of a Lie algebroid $A$. Ordinary representations of $A$ also provide examples of VB-groupoids, as explained in Example \ref{example:rep} below. We shall now investigate the infinitesimal version of the notion of \emph{homogeneous cochains}.

For any Lie algebroid $A \to M$, let $\CE^p(A):=\Ga(M,\Lambda^p A^*)$ and
$
d: \CE^p(A)\to \CE^{p+1}(A)
$
be the (Chevalley-Eilenberg) differential. The Lie algebroid cohomology $H^{\bullet}(A)$ is the cohomology of the complex $(\CE^{\bullet}(A), d)$. The wedge product on $\Gamma(M, \Lambda^\bullet A^*)$ induces a graded commutative algebra structure on $H^\bullet(A)$. 

When considering a VB-algebroid $A=\v$, the dual $\v^*$ is always taken with respect to the Lie algebroid side $\v \to E$, so that $\CE^p(\v) =  \Ga(E,\Lambda^p \v^*)$. The space of fiberwise (with respect to $\v \to \g$) $k$-homogeneous $p$-forms on $\v \to E$ is
\begin{equation}
\Gak(E,\Lambda^p \v^*) := \{\alpha \in \Ga(E,\Lambda^q \v^*):  h_{\l}^{\g \,\,*} \alpha = \l^k \alpha, \ \forall \l \geq 0\}.
\end{equation}

The wedge product induces a map
$$
\cdot \wedge \cdot: \Gamma_{\khom}(E, \Lambda^p \v^*) \times \Gamma_{k'\mbox{-}\hom}(E, \Lambda^{p'} \v^*) \to \Gamma_{k+k'\mbox{-}\hom}(E,  \Lambda^{p+p'} \v^*).
$$

Similarly to \eqref{proj:hom}, there exists a projection $P_{\khom}^{\g,p}: \Ga(E, \Lambda^p \v^*) \to \Gak(E, \Lambda^p \v^*)$ defined by
\begin{equation}\label{proj:hom2}
P_{\khom}^{\g,p}\alpha = \frac{1}{k!}\frac{d^k}{d\l^k}(h_\l^{\g\,\,*}\alpha)|_{\l=0}.
\end{equation}

\begin{proposition}
Let $\v \to E$ be a VB-algebroid. For each $k\in \No$ and every $p \geq 0$,
$$
P_{\khom}^{\g,p+1} \circ d = d \circ P_{\khom}^{\g,p}.
$$
In particular,
$$ 
d(\Gak(E,\Lambda^p \v^*)) \subset \Gak(E,\Lambda^{p+1} \v^*)
$$
\end{proposition}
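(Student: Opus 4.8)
The plan is to reduce the statement about the Chevalley--Eilenberg differential $d$ to the already-established fact that the homogeneous structure $h_\l^{\g}$ defines a Lie algebroid morphism of $\v \to E$ for each $\l \in \R_+$. Concretely, since $h_\l^{\g}: \v \to \v$ covers $h_\l: E \to E$ and is a Lie algebroid morphism, its pullback on forms $h_\l^{\g\,*}: \Ga(E, \Lambda^p \v^*) \to \Ga(E, \Lambda^p \v^*)$ is a chain map, i.e. $h_\l^{\g\,*} \circ d = d \circ h_\l^{\g\,*}$, for every $\l > 0$. By continuity (smoothness of $\l \mapsto h_\l^{\g\,*}\alpha$) this identity extends to $\l = 0$ as well, so we have a one-parameter family of commuting relations indexed by $\l \geq 0$.

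The key step is then to differentiate this family $k$ times in $\l$ at $\l = 0$. Since $d$ is $\R$-linear and commutes with the operation $\tfrac{1}{k!}\tfrac{d^k}{d\l^k}(\,\cdot\,)|_{\l=0}$ — both being local, continuous operators that act in different "variables" — applying $\tfrac{1}{k!}\tfrac{d^k}{d\l^k}|_{\l=0}$ to both sides of $h_\l^{\g\,*} d\alpha = d\, h_\l^{\g\,*}\alpha$ yields exactly
$$
P_{\khom}^{\g,p+1}(d\alpha) = d\bigl(P_{\khom}^{\g,p}\alpha\bigr),
$$
which is the desired identity $P_{\khom}^{\g,p+1} \circ d = d \circ P_{\khom}^{\g,p}$. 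For the inclusion $d(\Gak(E,\Lambda^p\v^*)) \subset \Gak(E,\Lambda^{p+1}\v^*)$, observe that if $\alpha$ is already $k$-homogeneous then $P_{\khom}^{\g,p}\alpha = \alpha$, so $d\alpha = d(P_{\khom}^{\g,p}\alpha) = P_{\khom}^{\g,p+1}(d\alpha)$, whence $d\alpha$ lies in the image of the projection $P_{\khom}^{\g,p+1}$, i.e. $d\alpha \in \Gak(E, \Lambda^{p+1}\v^*)$. Alternatively, one can argue directly: $h_\l^{\g\,*}(d\alpha) = d(h_\l^{\g\,*}\alpha) = d(\l^k \alpha) = \l^k d\alpha$.

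The only point requiring care — and the main (mild) obstacle — is justifying that $d$ commutes with differentiation in the auxiliary parameter $\l$, i.e. that $\tfrac{d^k}{d\l^k}(d\, h_\l^{\g\,*}\alpha)|_{\l=0} = d\bigl(\tfrac{d^k}{d\l^k}(h_\l^{\g\,*}\alpha)|_{\l=0}\bigr)$. This is a standard interchange-of-limits argument: in local trivializing coordinates the Chevalley--Eilenberg differential is a first-order differential operator with smooth coefficients independent of $\l$, the family $h_\l^{\g\,*}\alpha$ depends smoothly and polynomially (away from $\l=0$, with a controlled expansion at $\l=0$) on $\l$, and differentiation in $\l$ commutes with differentiation in the base and fiber coordinates by smoothness. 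This is entirely parallel to the proof of the analogous statement for the groupoid differential $\delta$ in the previous subsection, where the same reasoning was applied using that the face maps $\partial_i$ are vector bundle morphisms (Lemma \ref{p_vector}); here the role of "$\partial_i$ is a VB map" is played by "$h_\l^{\g}$ is a Lie algebroid morphism". I would therefore present the proof briefly, emphasizing the Lie-algebroid-morphism property of $h_\l^{\g}$ and the interchangeability of $d$ with $\partial_\l$, and leave the coordinate verification to the reader as routine.
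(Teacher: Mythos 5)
Your proof is correct and follows essentially the same route as the paper's: both rest on the two facts that $h_\l^{\g}$ is a Lie algebroid morphism (so $h_\l^{\g\,*}$ commutes with $d$) and that $d$, being a local first-order operator, commutes with differentiation in the auxiliary parameter $\l$ of the smooth family $h_\l^{\g\,*}\alpha$. Your write-up merely spells out the interchange-of-derivatives step in more detail than the paper does.
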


\begin{proof}
Since the Chevalley-Eilenberg differential $d$ is a local operator, we can assume $\v \to E$ is trivial. By looking at $h^{\v \,\,*}_\lambda \alpha$ as a smooth 1-parameter family of forms, one can see that $d$ commutes with $d/d\lambda$. The statement then follows from the fact that $h_\l^\v$ is a Lie algebroid morphism and, hence, $h_\l^{\v\,\,*}$ commutes with $d$.
\end{proof}

Thus, for each $k \in \No$, the $k$-homogeneous forms define a subcomplex $\CE^{\bullet}_{\khom}(\v)$ of $(\CE^\bullet(\v),d)$. The notation we shall use is
$$
\CE^p_{\khom}(\v) := \Gak(E,\Lambda^p \v^*) \text{ and } H_{\khom}^\bullet(\v) = H(\CE^{\bullet}_{\khom}(\v)).
$$

\br 
For $k=0$, $
\Gamma_{0\mbox{-}\hom}(E, \Lambda^p \v^*) \cong \Gamma(M,\Lambda^p\g^*)$
and the wedge product turns $\Gamma_{\khom}(E, \Lambda^\bullet \v^*)$ (resp. $H_{\khom}^{\bullet}(\v)$) into a right $\Gamma(M,\Lambda^\bullet \g^*)$-module (resp. $H^\bullet(\g)$-module).
\er

\begin{example}\label{example:rep}
Let $C \to M$ be a representation of the Lie algebroid $\g \to M$ defined by a $\g$-connection $\nabla: \Gamma(\g)\times \Gamma(C) \to \Gamma(C)$. Consider the vector bundle $\v = C^* \times_M \g \to C^*$. Given $u \in \Gamma(\g)$, let $\chi_u: C^* \to \v$ be the section given by
\begin{equation}\label{linear_C}
\chi_u(\xi) = (\xi, u(m)), \, \text{ for } \xi \in C_m^*.
\end{equation}
The sections $\chi_u$ with $u$ varying on $\Gamma(\g)$ generate $\Gamma(C^*, \v)$ as a $C^{\infty}(C^*)$-module. One can now show that the action algebroid structure $C^* \rtimes \g \to C^*$, determined by
\begin{align*}
[\chi_{u_1}, \chi_{u_2}] & = \chi_{[u_1,u_2]},\,\, \,u_1, \,u_2 \in \Gamma(\g);\\
\rho_\v(\chi_{u_1})(\ell_{\mu}) & = \ell_{\nabla_{u_1} \mu} \text{ and } \rho_\v(\chi_{u_1})(f \circ \pi) = (\Lie_{\rho(u_1)} f) \circ \pi, \,\, f \in C^\infty(M), \, \mu \in \Gamma(C),
\end{align*}
endows $\v \to C^*$ with a VB-algebroid structure, where $\pi: C^* \to M$ is the projection. The chain complex $\CE^{\bullet}_{1\mbox{-}\hom}(\v)$ is naturally isomorphic to $\Gamma(\Lambda^{\bullet} \g^* \otimes C)$ with the Koszul differential
\begin{align*}
d_{\nabla}\gamma(u_1, \dots, u_{p+1}) & = \sum_{i=1}^{p+1} (-1)^{i+1}\nabla_{u_i} \gamma(u_1, \dots, \hat{u_i}, \dots, u_{p+1})\\
& \hspace{-50pt} + \sum_{1 \leq i < j \leq p+1} (-1)^{i+j} \gamma([u_i, u_j], u_1, \dots, \hat{u_i}, \dots, \hat{u_j}, \dots, u_{p+1}), \, \gamma \in \Gamma(\Lambda^p \g^*\otimes C).
\end{align*}
More precisely, the evaluation map $ev: \CE^{p}_{1\mbox{-}\hom}(\v) \to \Gamma(\Lambda^p \g^* \otimes C)$,
$$
\<ev(\alpha)(u_1, \dots, u_p), \xi\> = \alpha(\chi_{u_1}(\xi), \dots, \chi_{u_p}(\xi)), \,\, \text{for} \,\,\, u_1, \dots, u_p \in \Gamma(\g), \, \xi \in C^*
$$
defines a chain isomorphism. The induced right $\Gamma(\Lambda^\bullet \g^*)$-module structure on $\Gamma(\Lambda^\bullet \g^* \otimes C)$ is wedge multiplication on the right in the $\Lambda \g^*$ factor. In particular, as $H(\g)$-modules, $H_{1\mbox{-}\hom}^{\bullet}(\v) \cong H^{\bullet}(\g, C)$, the cohomology of $\g$ with values in the representation $C$. As for groupoids, $H_{\khom}^{\bullet}(\v) \cong H^{\bullet}(\g, S^kC)$.

\end{example}

\subsection{Van Est theorem for homogeneous cochains}\label{sec:vanest}
Let $\G \rra M$ be a Lie groupoid with Lie algebroid $\g$. For every section $u \in \Gamma(\g)$, consider the corresponding right invariant vector field $\overrightarrow{u} \in \mathfrak{X}(\G)$. In the following, we shall denote by $B_pu$ the vector field on the space of $p$-composable arrows $B_p\G$ given by
\begin{equation}\label{Bp_vector}
B_pu(g_1,\dots, g_p) = (\overrightarrow{u}(g_1), 0_{g_2}, \dots, 0_{g_p}).
\end{equation}
Let us now recall the definition of the Van Est map. First, using the degeneracy map $s_0: B_{p-1}\G \to B_p\G$, we define $R_{u}: C^p(\G) \to C^{p-1}(\G)$ by
$$
R_{u} = s_0^* \circ \Lie_{B_pu}.
$$ 
The Van Est map
$\VE:  C^p(\G) \to \CE^p(\g)$
is defined as follows (\cite{Cr}): for a $p$-cochain $f \in C^p(\G)$, 
\begin{equation}\label{vanest}
\VE(f)(u_1, \dots, u_p) = \sum_{\sigma \in S_p} sgn(\sigma) \, R_{u_{\sigma(1)}}\dots R_{u_{\sigma(p)}} (f).
\end{equation}
In \cite{Cr} it is shown that it defines a chain map between the underlying complexes which preserves the corresponding product structures. We shall also need the following naturality result about $\VE$.

\begin{lemma}\label{commut_vannest}
Let $\mathcal{H}_1, \mathcal{H}_2$ be Lie groupoids with Lie algebroids $\mathfrak{h}_1, \mathfrak{h}_2$, respectively. If $\phi: \mathcal{H}_1 \to \mathcal{H}_2$ is a Lie groupoid morphism with the corresponding Lie algebroid morphism $Lie(\phi): \mathfrak{h}_1 \to \mathfrak{h}_2$, then
$$
\VE(B_p\phi^*f) = Lie(\phi)^* \VE(f), \ \ \forall f \in C^p(\mathcal{H}_2).
$$
\end{lemma}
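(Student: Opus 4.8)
The plan is to reduce the statement to the naturality of the two basic ingredients of \eqref{vanest}: the right-invariant vector fields and the degeneracy maps. First I would record the compatibility of $\phi$ with the nerve and with infinitesimal data: for each $p$, the map $B_p\phi: B_p\H_1 \to B_p\H_2$ commutes with all face and degeneracy maps (this is just functoriality of $B_\bullet$), and for every $u \in \Gamma(\h_1)$ the right-invariant vector fields are $\phi$-related, i.e. $T\phi \circ \overrightarrow{u} = \overrightarrow{Lie(\phi)(u)} \circ \phi$. Since $B_p\phi$ acts as $\phi$ on the first leg and the remaining legs carry the zero vector in \eqref{Bp_vector}, it follows that $B_p u$ is $B_p\phi$-related to $B_p(Lie(\phi)(u))$.

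The second step is to push these relations through $R_u = s_0^* \circ \Lie_{B_p u}$. Because $B_p u$ and $B_p(Lie(\phi)(u))$ are $B_p\phi$-related, the Lie derivatives satisfy $\Lie_{B_p u}(B_p\phi^* f) = B_p\phi^*(\Lie_{B_p(Lie(\phi)(u))} f)$ for any $f \in C^p(\H_2)$; this is the standard fact that pullback along a smooth map intertwines Lie derivatives along related vector fields. Combining this with $s_0 \circ B_{p-1}\phi = B_p\phi \circ s_0$ gives
\[
R_u\big(B_p\phi^* f\big) = B_{p-1}\phi^*\big(R_{Lie(\phi)(u)} f\big).
\]
Iterating this identity $p$ times, one obtains for any $u_1,\dots,u_p \in \Gamma(\h_1)$ that $R_{u_1}\cdots R_{u_p}(B_p\phi^* f) = R_{Lie(\phi)(u_1)}\cdots R_{Lie(\phi)(u_p)}(f)$, where on the right the iterated $R$'s are applied to $f \in C^p(\H_2)$ (the $B_0\phi^* = \mathrm{id}_M$-type identification at the end being harmless since the result is a function on $M$, but note one should pull back along the base map of $\phi$; I would make this bookkeeping explicit). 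Summing over $\sigma \in S_p$ with signs and using that $\VE(f)(u_1,\dots,u_p)$ is built exactly from these iterated operators, while $(Lie(\phi)^*\VE(f))(u_1,\dots,u_p) = \VE(f)(Lie(\phi)(u_1),\dots,Lie(\phi)(u_p))$ by definition of pullback of forms, yields the claimed equality $\VE(B_p\phi^* f) = Lie(\phi)^*\VE(f)$.

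The main point requiring care — rather than a genuine obstacle — is the first step: verifying that $B_p u$ is $B_p\phi$-related to $B_p(Lie(\phi)(u))$, which hinges on $\overrightarrow{u}$ being $\phi$-related to $\overrightarrow{Lie(\phi)(u)}$. This is where the groupoid-morphism hypothesis is used in an essential way: $\phi$ intertwines right translations and sends units to units, so it intertwines the right-invariant extensions of sections, and $Lie(\phi)$ is precisely the induced map on the units-normal data. I would also double-check that $B_p\phi^* f$ is again a normalized cochain (so that $R_u$ is defined on it), which is immediate from $s_i \circ B_{p-1}\phi = B_p\phi \circ s_i$ and $s_i^* f = 0$. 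Everything else is routine functoriality of pullbacks and Lie derivatives.
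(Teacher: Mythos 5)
There is a genuine gap at the very first step of your argument, and it is not mere bookkeeping. You assert that $\overrightarrow{u}$ is $\phi$-related to $\overrightarrow{Lie(\phi)(u)}$. But $Lie(\phi)$ is a vector bundle map $\h_1 \to \h_2$ covering the base map $\phi_0: M_1 \to M_2$, so for $u \in \Gamma(\h_1)$ the composite $Lie(\phi)\circ u$ is a section of $\phi_0^*\h_2$, \emph{not} of $\h_2$; the symbol $\overrightarrow{Lie(\phi)(u)}$ is undefined unless $\phi_0$ is a diffeomorphism (or the pushforward happens to be projectable). Concretely, if $\phi(g)=\phi(g')$ with $\tar(g)\neq\tar(g')$, then $T\phi(\overrightarrow{u}(g))$ and $T\phi(\overrightarrow{u}(g'))$ are right translates by the same arrow of possibly different elements of the fiber $(\h_2)_{\phi_0(\tar(g))}$, so $\overrightarrow{u}$ does not project to any vector field on $\H_2$. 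This generality matters for the paper: the lemma is later applied to morphisms such as $h_0^{\G}$ and the zero maps $0_i^{\G}$, whose base maps are not diffeomorphisms. The same defect propagates to your final formula, where $R_{Lie(\phi)(u_i)}$ is applied to $f\in C^p(\H_2)$ although $Lie(\phi)(u_i)$ is not a section of $\h_2$, and to your reading of $Lie(\phi)^*\VE(f)$: its pointwise (tensorial) meaning is fine, but it cannot be computed by the iterated-$R$ formula with ``sections'' that do not exist.

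The paper's proof repairs exactly this point: it writes $Lie(\phi)\circ\chi = \sum_i \gamma_i\,(\tilde\chi_i\circ\phi_0)$ with $\gamma_i\in \C(M_1)$ and $\tilde\chi_i\in\Gamma(\h_2)$, derives $R_\chi((B_p\phi)^*f) = \sum_i (\tar_{p-1}^*\gamma_i)\,(B_{p-1}\phi)^*(R_{\tilde\chi_i}f)$, and then iterates. The iteration is where a second subtlety lives: the Leibniz rule produces terms in which the Lie derivative hits the coefficient functions $\tar_{p-1}^*\gamma_i$, and these extra terms must be shown to vanish, which is where the normalization $s_j^*f=0$ enters. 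Your argument, by treating everything as a single pair of $\phi$-related vector fields, never sees these terms and therefore cannot explain why normalization is needed at all. As written, your proof is valid only in the special case where $\phi_0$ is a diffeomorphism; in general you need the local decomposition of $Lie(\phi)\circ u$ and the vanishing of the resulting Leibniz terms on normalized cochains.
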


\begin{proof}
For any $\chi\in \Gamma (\h_1)$ we can write
$$ Lie(\phi)(\chi) = \sum_i \gamma_i \ (\tilde{\chi}_i\circ \phi_0) \in \Gamma (\phi_0^*\h_2),$$
where $\phi_0=B_0\phi: M_1 \to M_2$ denotes the map between objects induced by $\phi$, $\gamma_i \in \C(M_1)$ and $\tilde{\chi}_i \in \Gamma(\h_2)$. A direct computation shows that 
$$R_{\chi}((B_p\phi)^*f) =\sum_i (\tar_{p-1}^*\gamma_i) (B_{p-1}\phi)^* (R_{\tilde{\chi}_i}f), \,\,\,   \forall f\in C^p(\mathcal{H}_2).$$
If we apply the above formula $p$-times, we notice that most terms in $R_{\chi_1}...R_{\chi_p} (B_p\phi)^*f$ will vanish since $\VE$ is defined on normalized cochains (namely, $s_i^*f=0$) but the term
$$ \sum_{i_1,..,i_p} \gamma_{i_1} ... \gamma_{i_p} \phi_0^*(R_{\tilde{\chi}_1}...R_{\tilde{\chi}_p}f).$$
 We thus get  the statement of the Lemma.
%Let us assume that $\phi$ covers a diffeomorphism $\phi_0$ on the manifold of objects. In this case, the result follows directly from the formula
%$$
%R_{\chi}(B_p\phi^*f) = B_{p-1}\phi^* R_{Lie(\phi)_*(\chi)}(f), \,\,\, \forall \, \chi \in \Gamma(\h_1),
%$$
%where $Lie(\phi)_*: \Gamma(\h_1) \to \Gamma(\h_2)$ is the map $Lie(\phi)_*(\chi) = Lie(\phi)(\chi \circ \phi_0^{-1})$. The general case follows similarly by applying formula \eqref{vanest} pointwisely. 
\end{proof}

The main result about the Van Est map in the present context is the following Theorem due to M. Crainic.

\begin{theorem}[\cite{Cr}] \label{vanest_thm}
Let $\G$ be a Lie groupoid and let $\g$ be its Lie algebroid. The Van Est map \eqref{vanest} induces an algebra homomorphism 
$$
VE: H^{\bullet}(\G) \to H^{\bullet}(\g).
$$
Moreover, if $\G$ has $p_0$-connected source fibers, then $VE$  is an isomorphism in degrees $p \leq p_0$, and it is injective for $p=p_0+1$.
\end{theorem}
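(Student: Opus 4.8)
This is Crainic's theorem, and the plan is to reconstruct the double-complex argument underlying it --- a groupoid analogue of the classical Van Est double complex. First I would build a first-quadrant double complex $(C^{\bullet,\bullet},\delta_{\mathrm h},\delta_{\mathrm v})$ together with edge identifications: the $q=0$ edge $C^{\bullet,0}$ will be the differentiable complex $(C^\bullet(\G),\delta)$ (up to normalization), and the $p=0$ edge $C^{0,\bullet}$ will be the Chevalley--Eilenberg complex $(\CE^\bullet(\g),d)$. One convenient model takes $C^{p,q}$ to be the space of $q$-forms on $B_p\G$ foliated along the source fibres, the source-fibre structure on $B_p\G$ being pulled back along the composition map $B_p\G\to\G$; equivalently, $C^{p,q}$ consists of the differentiable $p$-cochains of $\G$ with values in the $\G$-module $\Omega^q_\sour$ of $q$-forms along source fibres. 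The horizontal differential $\delta_{\mathrm h}$ is the simplicial (groupoid) differential of the form \eqref{G_diff}, and $\delta_{\mathrm v}$ is the leafwise de Rham differential along the source fibres --- on the $p=0$ edge it becomes the Chevalley--Eilenberg differential once $C^{0,\bullet}$ is identified with $\CE^\bullet(\g)$ by right-invariant extension of forms. Verifying that $\delta_{\mathrm h}$ and $\delta_{\mathrm v}$ are well defined and commute, so that $(C^{\bullet,\bullet},\delta_{\mathrm h},\delta_{\mathrm v})$ is a genuine double complex, is part of the (somewhat fiddly) set-up. The total complex $\mathrm{Tot}^\bullet$ carries a product combining the cup product \eqref{cup_prod} in the horizontal direction with the wedge product of leafwise forms in the vertical one, for which $\delta_{\mathrm h}$ and $\delta_{\mathrm v}$ are derivations.

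Next I would run the two spectral sequences of this double complex. Taking horizontal cohomology first: for each fixed $q$ the coefficient module $\Omega^q_\sour$ is ``geometrically induced'' from the units, so the row $(C^{\bullet,q},\delta_{\mathrm h})$ is a resolution of $\CE^q(\g)$ --- exact in positive degrees, with $H^0=\CE^q(\g)$ --- via a contracting homotopy of bar type built from the groupoid structure and the degeneracy $s_0$. Hence this spectral sequence collapses onto the $p=0$ edge, and the inclusion $\CE^\bullet(\g)\hookrightarrow\mathrm{Tot}^\bullet$ induces an isomorphism $H^n(\g)\xrightarrow{\ \sim\ }H^n(\mathrm{Tot})$ in \emph{all} degrees $n$; no connectivity hypothesis enters here. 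Taking vertical cohomology first: at fixed $p$ the column $(C^{p,\bullet},\delta_{\mathrm v})$ is the leafwise de Rham complex of the foliation of $B_p\G$ by source fibres, whose leaves are $p_0$-connected by hypothesis. A fibrewise Poincar\'e lemma ``with parameters'' then shows this column is acyclic in degrees $1\le q\le p_0$, with $H^0_{\delta_{\mathrm v}}$ reproducing the $q=0$ edge $(C^\bullet(\G),\delta)$. The standard edge argument for a double complex with this partial column acyclicity then gives that the inclusion of the $q=0$ edge induces an isomorphism $H^n(\G)\xrightarrow{\ \sim\ }H^n(\mathrm{Tot})$ for $n\le p_0$ and an injection for $n=p_0+1$. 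Composing the two chains yields $H^n(\G)\cong H^n(\g)$ for $n\le p_0$ and an injection $H^{p_0+1}(\G)\hookrightarrow H^{p_0+1}(\g)$.

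It then remains to identify the composite $H^\bullet(\G)\to H^\bullet(\g)$ obtained this way with the map induced by the explicit formula \eqref{vanest}, and to check that it is a map of algebras. For the first point I would take a $\delta$-closed $f\in C^p(\G)$, carry its class into $H^p(\mathrm{Tot}^\bullet)$ through the $q=0$ edge map, represent that class by a total cocycle $\tilde f=f+f_1+\dots+f_p$ with $f_i\in C^{p-i,i}$, and read off its $(0,p)$-component $f_p\in C^{0,p}=\CE^p(\g)$. The zig-zag connecting $f$ to $f_p$ is a staircase each of whose steps is one application of $\delta_{\mathrm v}$ --- a leafwise de Rham derivative in the direction of a right-invariant vector field $\overrightarrow{u}$ --- followed by the bar homotopy for $\delta_{\mathrm h}$, whose relevant part is $s_0^*$; such a step is exactly an operator $R_u=s_0^*\circ\Lie_{B_p u}$, and tracking the signs together with the antisymmetrization inherent in evaluating a $p$-form on $u_1,\dots,u_p$ reproduces the alternating sum over $S_p$ in \eqref{vanest}. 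For the algebra statement, the product on $\mathrm{Tot}^\bullet$ is compatible with the cup product on $C^\bullet(\G)$ and the wedge product on $\CE^\bullet(\g)$ under the two edge inclusions, so the induced composite on cohomology is an algebra homomorphism; Lemma \ref{commut_vannest} is consistent with this, and may also be recovered from naturality of the double complex in $\G$.

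The step I expect to be the main obstacle is the fibrewise Poincar\'e lemma ``with parameters''. Here $p_0$-connectivity of the source fibres yields, via Hurewicz and de Rham, only the vanishing of the de Rham cohomology of each \emph{individual} source fibre in degrees $1\le q\le p_0$, whereas one needs acyclicity of the \emph{whole family}, i.e.\ of the leafwise de Rham complex over the entire base $B_p\G$. This is obtained from the local triviality of the submersion $\sour$ --- so that source fibres are locally products --- together with a Mayer--Vietoris / partition-of-unity patching argument over the base, and it requires particular care in top degree $p_0+1$, where only the one-sided (injectivity) statement survives. A secondary, more computational than conceptual, point is the staircase bookkeeping that identifies the edge composite with \eqref{vanest}.
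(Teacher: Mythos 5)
This theorem is not proved in the paper at all: it is imported verbatim from Crainic's work and used as a black box (the paper's own contribution begins only afterwards, with the Homological Lemma and Proposition \ref{hom_vanest}, which transfer Crainic's statement to the homogeneous subcomplexes). So there is no in-paper proof to compare against; what you have written is a reconstruction of the proof in \cite{Cr} itself, and it does follow that paper's strategy faithfully: the van Est double complex interpolating between $C^{\bullet}(\G)$ and $\CE^{\bullet}(\g)$, row-exactness via a bar-type homotopy with no connectivity hypothesis, column-acyclicity in the range $1\le q\le p_0$ via a fibrewise Poincar\'e lemma for the source foliation, and the staircase identification of the edge composite with the explicit antisymmetrized formula \eqref{vanest}. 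The one place where your sketch is loose is the treatment of the $q=0$ edge: if $C^{p,0}$ consists of $p$-cochains valued in leafwise $0$-forms, then $(C^{\bullet}(\G),\delta)$ is not literally equal to $C^{\bullet,0}$ but rather augments into it as the subcomplex of cochains valued in leafwise-constant functions, with $H^0_{\delta_{\mathrm v}}$ of each column recovering exactly that subcomplex when the fibres are connected; this is the standard augmentation formulation and is how the edge isomorphism should be phrased. With that adjustment your outline is the argument of \cite{Cr}, and you have correctly isolated the two genuinely technical points (the parametrized Poincar\'e lemma and the sign bookkeeping in the zig-zag).
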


To get our refinement of Theorem \ref{vanest_thm} for homogeneous cochains on VB-groupoids and algebroids, we first state the following simple homological algebra fact. 

\begin{namedthm*}{Homological Lemma}
Let $(C^{\bullet}_i, \delta_i)$ be differential complexes, $i=1,2$, endowed with projections $P_i: C^\bullet_i \to C^\bullet_i$ (i.e. $P_i \circ \delta_i = \delta_i \circ P_i$ and $P_i^2=P_i$). If $F: C_1^\bullet \to C_2^\bullet$ is a morphism satisfying $F \circ P_1 = P_2 \circ F$, then for each $p$ such that $F: H^{p}(C_1) \to H^p(C_2)$ is injective (resp. surjective) its restriction $F_r: H^p(S_1) \to H^p(S_2)$ is also injective (resp. surjective), where $S_i^\bullet = P_i(C_i^\bullet)$.
\end{namedthm*}

We are thus left with studying the behaviour of the projections onto homogeneous cochains under the Van Est map.
%\begin{proof}
%This is a straightforward exercise in homological algebra.
%
%\textit{Injectivity:} Let $[s_1] \in H^p(S_1)$ such that $F_r([s_1])=0$. As $F$ is injective, it follows that there exists $c_1 \in C_1^{p-1}$ such that $\delta_1 c_1 = s_1$. Then, $\delta P_1 c_1 = P_1 \delta_1 c_1 = P_1 s_1 = s_1$. This implies $[s_1] = 0$ in $H^{p}(S_1)$. 
%
%\textit{Surjectivity:} Let $[s_2] \in H^{p}(S_2)$. By assumption, there exists $c_1 \in C^p_1$ such that $F([c_1]) = [s_2]$. Take $[P_1 c_1] \in H^p(S_1)$. One has $F_r([P_1c_1]) = [P_2 F(c_1)] = [P_2 s_2] = [s_2]$. This concludes the proof.
%\end{proof}
To that end, let $\V \toto E$ be a VB-groupoid over $\G \toto M$ and let $\v \to E$ be its Lie algebroid. 

\begin{proposition}\label{hom_vanest}
For each $k \in \No$ and every $p \geq 0$,
$$
\VE \circ P_{\khom}^{\G,p} = P_{\khom}^{\g,p} \circ \VE.
$$
In particular, $\VE(\Ck(B_p\V)) \subset \Gak(\Lambda^p \,\v^*)$.
\end{proposition}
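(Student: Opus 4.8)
The plan is to track how the homogeneous structure $h_\lambda^{B_p\G}$ interacts with each ingredient in the definition \eqref{vanest} of the Van Est map. Concretely, the Van Est map is built out of the operators $R_u = s_0^* \circ \Lie_{B_p u}$, so it suffices to show that each $R_u: C^p(\V) \to C^{p-1}(\V)$ intertwines the relevant projections, i.e. $R_u \circ P_{\khom}^{\G,p} = P_{k\mbox{-}\hom}^{\g,p-1} \circ R_u$ --- wait, this cannot be literally right, since $R_u$ lands in the groupoid complex of $\V$, not its algebroid; the cleaner formulation is that $R_u$ commutes with the groupoid-side projections $P_{\khom}^{\G,p}$ on $C^\bullet(\V)$, and then separately that the final ``evaluation on sections'' step converting a groupoid cochain into an element of $\CE^p(\g)$ matches $P^\g$ with $P^\G$. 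So the first step is to prove that $R_u$ preserves $k$-homogeneity, equivalently $P^{\G,p-1}_{\khom} \circ R_u = R_u \circ P^{\G,p}_{\khom}$.

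The key structural input for that first step is Lemma \ref{p_vector} together with the identity $B_p h_\lambda^\G = h_\lambda^{B_p\G}$ noted just after it: the degeneracy map $s_0: B_{p-1}\V \to B_p\V$ is a vector bundle morphism over $s_0: B_{p-1}\G \to B_p\G$, so $s_0^*$ commutes with the respective $P_{\khom}$ projections. It then remains to understand $\Lie_{B_p u}$. Here the point is that the vector field $B_p u$ on $B_p\V$ --- defined by $B_p u(\mathfrak{g}_1,\dots,\mathfrak{g}_p) = (\overrightarrow{u}(\mathfrak{g}_1), 0, \dots, 0)$ using the right-invariant vector field on $\V$ attached to a section of $\v$ --- should be \emph{projectable} along $h_\lambda^{B_p\G}$, i.e. $h_\lambda^{B_p\G}$-related to the corresponding vector field $B_p u_0$ downstairs on $B_p\G$ (where $u_0$ is the base section $\g \to \g$ underlying $u$, coming from the VB-algebroid structure $\v \to \g$). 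This is exactly the infinitesimal counterpart of \eqref{vba_diag2} being a Lie algebroid morphism: right-invariant vector fields for sections of $\v$ that are $h_\lambda^\g$-homogeneous of degree $0$ are $h_\lambda^\G$-projectable, and the needed section $u$ is of this type. Granting that projectability, $\Lie_{B_p u}$ intertwines $h_\lambda^{B_p\G\,*}$ with $h_\lambda^{B_p\G\,*}$ up to the pushforward on the base, and differentiating the scaling relation $k$ times in $\lambda$ at $\lambda = 0$ yields $\Lie_{B_p u} \circ P^{\G}_{\khom} = P^{\G}_{\khom} \circ \Lie_{B_p u}$. Composing with the already-established commutation for $s_0^*$ gives the claim for $R_u$, and iterating and antisymmetrizing as in \eqref{vanest} gives it for $\VE$; the final bookkeeping step is checking that ``evaluation of a homogeneous groupoid cochain on a $p$-tuple of right-invariant vector fields'' produces a $k$-homogeneous element of $\Gamma(E,\Lambda^p\v^*)$, which again follows because $\VE(f)$ is a genuine Lie algebroid cochain on $\v$ and its homogeneity weight is dictated by that of $f$ under the $\v \to \g$ scaling.

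I expect the main obstacle to be the second half of the first step: making precise the claim that $B_p u$ is $h_\lambda^{B_p\G}$-projectable and identifying what it projects to. One must choose the section $u \in \Gamma(\v)$ carefully --- it should be (fiberwise with respect to $\v \to \g$) of homogeneity degree $0$, i.e. a ``core-free'' / linear-over-the-base section pulled up from $\Gamma(\g)$ --- because the definition \eqref{vanest} only requires plugging in sections of $\g$, not arbitrary sections of $\v$, and those lift to exactly the projectable right-invariant vector fields. Once the correct class of sections is fixed, the projectability is a direct consequence of \eqref{vba_diag2} (or of the integrated statement \eqref{vb_diag2}) via the standard functoriality of right-invariant vector fields under groupoid morphisms, and the rest is the routine $\lambda$-differentiation argument already used in the proof of the analogous Proposition for $d$ and $P^{\g}_{\khom}$. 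An alternative, perhaps slicker, route that avoids vector-field bookkeeping altogether: invoke the naturality Lemma \ref{commut_vannest} applied to the groupoid morphism $h_\lambda^\G: \V \to \V$ (which covers $h_\lambda: E \to E$ and whose Lie functor is $h_\lambda^\g: \v \to \v$), giving $\VE \circ (h_\lambda^{B_\bullet\G})^* = (h_\lambda^{\g})^* \circ \VE$; differentiating this identity $k$ times in $\lambda$ at $\lambda=0$ and using \eqref{proj:hom}, \eqref{proj:hom2} immediately yields $\VE \circ P^{\G,p}_{\khom} = P^{\g,p}_{\khom} \circ \VE$, and the ``in particular'' clause follows since $P^{\g,p}_{\khom}$ has image $\CE^p_{\khom}(\v)$. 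I would present this second argument as the main proof, with the vector-field discussion as a remark if space permits.
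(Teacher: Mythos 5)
Your proposed ``main proof'' --- applying the naturality Lemma \ref{commut_vannest} to the groupoid morphism $h_\lambda^\G:\V\to\V$ with $Lie(h_\lambda^\G)=h_\lambda^\g$ to get $\VE\circ h_\lambda^{\G\,*}=h_\lambda^{\g\,*}\circ\VE$, then differentiating in $\lambda$ at $\lambda=0$ to recover the projections \eqref{proj:hom}, \eqref{proj:hom2} --- is exactly the argument the paper gives. The proposal is correct and takes essentially the same approach; the longer $R_u$-by-$R_u$ discussion is unnecessary, as you yourself conclude.
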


\begin{proof}
Let $h_{\l}^{\G}: \V \to \V$, $h_{\l}^{\g}: \v \to \v$ be the homogeneous structures of the vector bundles $\V \to \G$, $\v \to \g$, respectively. By Lemma \ref{commut_vannest}, the fact that $h_{\l}^{\G}$ is a groupoid homomorphism with $Lie(h_\l^\G) = h_\l^\g$ implies that
$$
\VE \circ h_\l^{\G \,\, *} = h_\l^{\g\,\,*} \circ \VE, \ \forall \lambda.
$$
%\comment{Just to check the commutation of $\VE$ with the $\frac{d}{d\l}|_{\l=0}$ operation.}
Hence, by applying $\frac{d}{d\l}|_{\l=0}$ on both sides, one obtains the commutation relation between $\VE$ and the projections $P_{hom,k}^{\cdot\,,\, p}$. The result now follows directly.
\end{proof}

The restriction of the Van Est map to the subcomplex of $k$-homogeneous cochains shall be denoted by
$$
\VEk : = \VE|_{C_{\khom}^p(\V)} : C^{p}_{\khom}(\V) \to CE^p_{\khom}(\v).
$$

\begin{example}[0-homogeneous cochains]
For $k=0$, using the isomorphisms $C_{0\mbox{-}\hom}^p(\V) \cong C^p(\G)$ and $CE^p_{0\mbox{-}\hom}(\v) \cong \CE^p(\g)$, one can check that $\VE_{0\mbox{-}\hom} \cong \VE_\G: C^p(\G) \to \CE^p(\g)$. To see this, take $f \in C_{0\mbox{-}\hom}^p(\V)$ and $\chi_1, \dots, \chi_p \in \v$, and notice that $\VE_{0\mbox{-}\hom}(f)(\chi_1, \dots, \chi_p)$ only depends on the projections $u_i \in \g$ of $\chi_i$, $i=1,\dots, p$. Hence, to compute $\VE_{0\mbox{-}\hom}$, it suffices to take $\chi_1, \dots, \chi_p$ linear sections \footnote{A linear section $\chi$ of $\v$ is a section $\chi: E \to \v$ which is a vector bundle homomorphism covering a section $u: M \to \g$ (see \cite{GM08}).} of $\v$ covering $u_1, \dots, u_p \in \Gamma(\g)$. In this case, 
$$
\VE_{0\mbox{-}\hom}(f)(\chi_1, \dots, \chi_p) = \pi_E^*\VE_\G(f_0)(u_1, \dots, u_p),
$$ 
where $f= \pi_{B_p\G}^*f_0$, $f_0 \in C^{\infty}(B_p\G)$ and  $\pi_E: E \to M$, $\pi_{B_p\V}: B_p\V \to B_p\G$ are the vector bundle projections.
\end{example}

We are now ready to state and prove our main theorem. 
\begin{theorem}\label{main}
Let $\G \toto M$ be a Lie groupoid with Lie algebroid $\g$. For a \vbg $\V \toto E$ over $\G$ with underlying \vba $\v\to E$, the Van est map on $k$-homogeneous cochains induces a module homomorphism
$$
\VEk: H^\bullet_{\khom}(\V) \to H^\bullet_{\khom}(\v)
$$
covering the algebra homomorphism $\VE_\G: H^{\bullet}(\G) \to H^{\bullet}(\g)$. Moreover, if $\G$ has $p_0$-connected source fibers, then $\VE_{\khom}$ is an isomorphism for all $p\leq p_0$ and it is injective for $p=p_0+1$.
\end{theorem}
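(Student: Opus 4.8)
The plan is to assemble the theorem from the three ingredients already in place: Crainic's Van Est theorem (Theorem~\ref{vanest_thm}), the compatibility of the Van Est map with the homogeneity projections (Proposition~\ref{hom_vanest}), and the Homological Lemma. First I would recall that the ambient complexes are $C^\bullet(\V)$ and $\CE^\bullet(\v)$, that $\VE: C^\bullet(\V)\to\CE^\bullet(\v)$ is a chain map by \cite{Cr}, and that the degreewise projections $P_{\khom}^{\G,\bullet}$ and $P_{\khom}^{\g,\bullet}$ are chain maps (by the two Propositions preceding the definition of $C^\bullet_{\khom}(\V)$ and $\CE^\bullet_{\khom}(\v)$) and are idempotent (clear from the local formula~\eqref{proj:hom}, since reparametrizing $\lambda$ leaves a $k$-homogeneous function fixed). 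Proposition~\ref{hom_vanest} gives exactly the intertwining hypothesis $F\circ P_1=P_2\circ F$ of the Homological Lemma with $C_1^\bullet=C^\bullet(\V)$, $P_1=P_{\khom}^{\G,\bullet}$, $C_2^\bullet=\CE^\bullet(\v)$, $P_2=P_{\khom}^{\g,\bullet}$, $F=\VE$. Since $\V\toto E$ is a Lie groupoid over $\G\toto M$ with the same source fibers as... no: the source fibers of $\V$ are vector-bundle extensions of those of $\G$, hence $p_0$-connected whenever those of $\G$ are; so Theorem~\ref{vanest_thm} applies to $\V$ and tells us $\VE: H^p(\V)\to H^p(\v)$ is an isomorphism for $p\le p_0$ and injective for $p=p_0+1$. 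Feeding this into the Homological Lemma yields that $\VE_{\khom}: H^p_{\khom}(\V)=H^p(P_1(C_1^\bullet))\to H^p_{\khom}(\v)=H^p(P_2(C_2^\bullet))$ is an isomorphism for $p\le p_0$ and injective for $p=p_0+1$, which is the main assertion.

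It remains to address the module-homomorphism statement. Here I would invoke the $k=0$ case: $C^\bullet_{0\text{-}\hom}(\V)\cong C^\bullet(\G)$ and $\CE^\bullet_{0\text{-}\hom}(\v)\cong\CE^\bullet(\g)$, and by the Example on $0$-homogeneous cochains $\VE_{0\text{-}\hom}$ is identified with $\VE_\G$, which is an algebra homomorphism by \cite{Cr}. The cup product~\eqref{cup_prod} on $C^\bullet(\V)$ restricts to a pairing $C^p_{\khom}(\V)\times C^{p'}_{0\text{-}\hom}(\V)\to C^{p+p'}_{\khom}(\V)$ (since multiplying a $k$-homogeneous function by a $0$-homogeneous one is again $k$-homogeneous — this is the multiplicativity $C_{\khom}^\infty(V)\times C_{k'\text{-}\hom}^\infty(V)\to C_{k+k'\text{-}\hom}^\infty(V)$ of the first subsection with $k'=0$), giving $H^\bullet_{\khom}(\V)$ its right $H^\bullet(\G)$-module structure; similarly the wedge product gives $H^\bullet_{\khom}(\v)$ its right $H^\bullet(\g)$-module structure. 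Because $\VE$ preserves products (\cite{Cr}) and preserves homogeneity degree (Proposition~\ref{hom_vanest}), the identity $\VE(f\star \phi)=\VE(f)\wedge\VE(\phi)$ restricts to the case $f\in C^p_{\khom}(\V)$, $\phi\in C^{p'}_{0\text{-}\hom}(\V)$, which after passing to cohomology says precisely that $\VE_{\khom}$ is a morphism of modules covering the algebra morphism $\VE_\G$.

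The main obstacle I anticipate is not in the homological bookkeeping — that is essentially automatic once the three ingredients are granted — but in the one geometric point that needs care: the hypothesis of Theorem~\ref{vanest_thm} is on the source fibers of the \emph{total} groupoid $\V$, so I must check that $p_0$-connectedness of the source fibers of $\G$ implies $p_0$-connectedness of those of $\V$. This follows because the source map $\sour_\V$ fits into the VB-groupoid square, so each $\sour_\V$-fiber fibers as a vector bundle over the corresponding $\sour$-fiber of $\G$ (with fiber a vector space, via the linear structure on $\V\to\G$ restricted to the source fiber); a vector bundle over a $p_0$-connected base is itself $p_0$-connected, being homotopy equivalent to its base. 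I would state this as a short preliminary observation. Everything else — idempotency of $P_{\khom}$, the chain-map property, the product compatibility — is either already proved in the excerpt or an immediate consequence of the displayed formulas, so the proof is a genuinely short assembly.

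\begin{proof}
We first note that the source fibers of $\V$ are $p_0$-connected whenever those of $\G$ are: for each $x\in M$, the source map $\sour_\V$ restricts to a surjective submersion $\sour_\V^{-1}(E_x)\to \sour^{-1}(x)$ which, by the compatibility~\eqref{vb_diag2} of the linear and groupoid structures, is a vector bundle; hence $\sour_\V^{-1}(E_x)$ is homotopy equivalent to $\sour^{-1}(x)$ and therefore $p_0$-connected. Thus Theorem~\ref{vanest_thm} applies to the Lie groupoid $\V\toto E$: the Van Est map $\VE: H^p(\V)\to H^p(\v)$ is an isomorphism for $p\le p_0$ and injective for $p=p_0+1$.

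Now apply the Homological Lemma with $(C_1^\bullet,\delta_1)=(C^\bullet(\V),\delta)$, $(C_2^\bullet,\delta_2)=(\CE^\bullet(\v),d)$, $P_1=P_{\khom}^{\G,\bullet}$, $P_2=P_{\khom}^{\g,\bullet}$ and $F=\VE$. The operators $P_i$ are chain maps by the Propositions preceding the definitions of $C^\bullet_{\khom}(\V)$ and $\CE^\bullet_{\khom}(\v)$, and they are idempotent since a $k$-homogeneous function (resp. form) is fixed by $h_\l^{\scriptscriptstyle\G\,*}$ (resp. $h_\l^{\scriptscriptstyle\g\,*}$) after the rescaling defining~\eqref{proj:hom},~\eqref{proj:hom2}. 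The intertwining hypothesis $F\circ P_1=P_2\circ F$ is exactly Proposition~\ref{hom_vanest}. Since $S_1^\bullet=P_1(C_1^\bullet)=C^\bullet_{\khom}(\V)$ and $S_2^\bullet=P_2(C_2^\bullet)=\CE^\bullet_{\khom}(\v)$, the Homological Lemma gives that $\VE_{\khom}: H^p_{\khom}(\V)\to H^p_{\khom}(\v)$ is an isomorphism for $p\le p_0$ and injective for $p=p_0+1$.

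Finally, the cup product~\eqref{cup_prod} restricts to $C^p_{\khom}(\V)\times C^{p'}_{0\text{-}\hom}(\V)\to C^{p+p'}_{\khom}(\V)$ (multiplicativity of homogeneous functions with $k'=0$) and likewise the wedge product restricts to $\Gamma_{\khom}(E,\Lambda^p\v^*)\times\Gamma_{0\text{-}\hom}(E,\Lambda^{p'}\v^*)\to\Gamma_{\khom}(E,\Lambda^{p+p'}\v^*)$; under $C^\bullet_{0\text{-}\hom}(\V)\cong C^\bullet(\G)$ and $\CE^\bullet_{0\text{-}\hom}(\v)\cong\CE^\bullet(\g)$ these induce the stated module structures on $H^\bullet_{\khom}(\V)$ and $H^\bullet_{\khom}(\v)$. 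Since $\VE$ is multiplicative \cite{Cr} and preserves homogeneity degree (Proposition~\ref{hom_vanest}), the identity $\VE(f\star\phi)=\VE(f)\wedge\VE(\phi)$ specializes to $f\in C^p_{\khom}(\V)$, $\phi\in C^{p'}_{0\text{-}\hom}(\V)$, and passing to cohomology shows that $\VE_{\khom}$ is a morphism of right modules covering $\VE_\G: H^\bullet(\G)\to H^\bullet(\g)$ (which is an algebra homomorphism by Theorem~\ref{vanest_thm} together with the $k=0$ case discussed above).
\end{proof}
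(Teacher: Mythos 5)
Your proof is correct and follows the paper's argument essentially verbatim: reduce to Crainic's Theorem~\ref{vanest_thm} applied to $\V\toto E$, feed the resulting isomorphism/injectivity into the Homological Lemma via the intertwining of Proposition~\ref{hom_vanest}, and obtain the module statement from multiplicativity of $\VE$ together with $\VE_{0\text{-}\hom}\cong\VE_\G$. One small imprecision: a source fiber of $\V\toto E$ is $\sour_\V^{-1}(e)$ for a single point $e\in E$ (an \emph{affine} bundle over the corresponding source fiber of $\G$), not $\sour_\V^{-1}(E_x)$ as you wrote; the connectivity conclusion is unaffected, since an affine bundle is still homotopy equivalent to its base.
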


\begin{proof}
The $H^\bullet(\G)$-module structure on $H_{\khom}^\bullet(\v)$ comes from the cup product of $C^\bullet_{\khom}(\V)$ and $C_{0\mbox{-}\hom}^\bullet(\V) \cong C^\bullet(\G)$. So, the first statement follows from the fact that $\VEk$ is the restriction of the Van Est map of $\V$ to homogeneous cochains and that $\VE_{0\mbox{-}\hom} \cong \VE_\G$. 

Let us now assume that $\G$ has $p_0$-connected source fibers. First note that this implies that $\V \toto E$ is also source $p_0$-connected. Indeed, a source fiber of $\V \toto E$ is a affine bundle over the corresponding source fiber of $\G \toto M$. So, the Van Est Theorem \ref{vanest_thm} implies that $\VE: H^p(\V) \to H^p(\v)$ is an isomorphism for $p \leq p_0$ and injective for $p=p_0+1$. The result will now follow from Proposition \ref{hom_vanest} by applying the Homological Lemma to $F= \VE$, $(C_1^\bullet, \delta_1) = (\C(B_{\bullet}\V), \delta)$,$(C_2^\bullet, \delta_2) = (\Gamma(E,\wedge^\bullet \v^*), d)$ with projections  $P_1 = P_{\khom}^{\G,\bullet}:\C(B_{\bullet}\V) \to \Ck(B_{\bullet}\V)$ and $P_2=P_{\khom}^{\g, \bullet}: \Gamma(E,\Lambda^\bullet \v^*) \to \Gak(E,\Lambda^\bullet \v^*)$. 
\end{proof}

\section{1-Homogeneous cochains and representations up to homotopy}\label{sec:ruth}
In \cite{GM08,GM10}, it was shown that VB-groupoids and VB-algebroids provide an intrinsic version of the notion of (2-term) representation up to homotopy, generalizing the example given in the introduction, as well as Examples \ref{ex:repG} and \ref{example:rep} above. In this section, we show how Theorem \ref{main}, when applied to $1$-homogeneous cochains, recovers a Van Est result for the underlying 2-term representations up to homotopy of \cite{AS}. We also comment on how this approach realizes the original porposal in \cite{CrMo} for proving a rigidity conjecture.

\subsection{VB-groupoid and VB-algebroid cohomology}\label{sec:vbcom}
Following \cite{GM10}, given \vbg $\pi:\V \to \G$ we define 
$C^p_{VB}(\V)$ to be the space 
of $1$-homogeneous cochains $\phi \in C^\infty_{1\mbox{-}\hom}(B_p\V)$ satisfying the following two additional conditions: 
\begin{enumerate}
\item \label{cond1} $\phi(0_g, \xi_1,..,\xi_{p-1}) = 0$,
\item \label{cond2} $\phi(0_g\cdot \xi_1,..,\xi_{p}) = \phi(\xi_1,..,\xi_{p})$, 
\end{enumerate}
for all $(\xi_1,..,\xi_p) \in B_p\V$ and $g\in \G$ such that $(0_g,\xi_1)\in B_2\V$.
As observed in \cite{GM10}, condition \ref{cond1} above implies that $\phi(\xi_1,\xi_2,..,\xi_p)$ only depends on $\xi_1$ and on the projections $g_i=\pi(\xi_i)\in \G, i=1,..,p$, while condition \ref{cond2} is a left-invariance property. 

It is shown in \cite{GM10} that $C^\bullet_{VB}(\V)$ defines a subcomplex of $C^\bullet_{1\mbox{-}\hom}(\V)$. Moreover, the cup product with $C_{0\mbox{-}\hom}^\bullet{\V} \cong C^\bullet(\G)$ defines a right $C^\bullet(\G)$-submodule structure on $C^\bullet_{VB}(\V)$. The following Lemma relates the cohomology of the two complexes.

\bl\label{lma:vbgc0}
The inclusion $\i: C_{VB}^{\bullet}(\V) \hookrightarrow C_{1\mbox{-}\hom}^{\bullet}(\V)$ induces a isomorphism of right $H^{\bullet}(\G)$-modules in cohomology.
\el

\begin{proof}
We need to show that every cocycle in $ C^\infty_{1\mbox{-}\hom}(B_p\V)$ is cohomologous to an element of the subcomplex $C^p_{VB}(\V)$. To that end, first notice that if $\delta \phi =0$ and $\phi$ satisfies condition \ref{cond1}, then it satisfies condition \ref{cond2}. This follows directly from evaluating \[ 0= (\delta \phi)(0_g,\xi_1,...,\xi_p).\]
We are thus left with showing that,  for each cocycle $\phi \in C^\infty_{1\mbox{-}\hom}(B_p\V)$ there exists a $\psi \in C^\infty_{1\mbox{-}\hom}(B_{p-1}\V)$ such that $\phi + \delta \psi$ satisfies \ref{cond1}. This, in turn, follows by applying recursively the following claim: if $\delta \phi =0 $ and 
\begin{equation}\label{eq:phi} 
\phi(\xi_0,..,\xi_{p-1}) = 0,
\end{equation} 
for all $(\xi_0,..,\xi_{p-1})\in B_p\V$ such that $\xi_i = 0_{g_i}, i=0,..,l\leq p-1$ then there exists a $\psi \in C^\infty_{1\mbox{-}\hom}(B_{p-1}\V)$ such that $\phi+\delta \psi$ satisfies \eqref{eq:phi} for all $(\xi_0,..,\xi_{p-1})\in B_p\V$ such that $\xi_i = 0_{g_i}, i=0,..,l-1$. Notice that for $l=p-1$, eq. \eqref{eq:phi} follows from $\phi$ being homogeneous of degree $1$. To prove this claim for $l<p-1$, one choses any $\psi \in C^\infty_{1\mbox{-}\hom}(B_{p-1}\V)$ such that
\[ \psi(\xi_1,..,\xi_{p-1}) = \phi(0_{\pi(\xi_{p-1})^{-1}\cdot \cdot \cdot \pi(\xi_{1})^{-1}}, \xi_1, ... \xi_{p-1}) \] for all $(\xi_1,..,\xi_{p-1})\in B_{p-1}\V$ such that $\tar_\V(\xi_1) = 0_{\tar(\pi(\xi_1))}$. This is always possible since the subset of such elements in $B_{p-1}\V$ is a smooth embedded submanifold since the target map is a submersion. What needs to be shown now is 
\[ (\phi + \delta \psi)(\xi_0,..,\xi_{p-1}) = 0, \ \forall (\xi_0,..,\xi_{p-1})\in B_p\V : \  \xi_i = 0_{g_i}, i=0,..,l-1.\]
Finally, this last identity follows by evaluating
\[ 0=(\delta \phi)(0_{\pi(\xi_{p-1})^{-1}\cdot \cdot \cdot \pi(\xi_{0})^{-1}}, \xi_0,..,\xi_{p-1}),\]
and using the recursion hypothesis.
\end{proof}

For a \vba $\v \to A$, the \emph{VB-algebroid cochain complex} is defined exactly as the complex of $1$-homogeneous cochains 
$$
CE^k_{VB}(\v):=CE^k_{1\mbox{-}\hom}(\v).
$$
The restriction of the Van Est map to $1$-homogeneous cochains as in Section \ref{sec:vanest} provides a chain map $\VE_{1\mbox{-}\hom}: C^\bullet_{1\mbox{-}\hom}(\V) \to CE^\bullet_{VB}(\v)$. Its restriction to the subcomplex $C^\bullet_{VB}(\V) \subset C^\bullet_{1\mbox{-}\hom}(\V)$ defines a chain map that we shall denote by $$\VE_{VB}: C^\bullet_{VB}(\V)\to CE^\bullet_{VB}(\v).$$

\begin{corollary}\label{cor:van_VB}
With the notations above, the Van Est map
$$
\VE_{VB} : H^\bullet(C_{VB}(\V)) \to H^\bullet(CE_{VB}(\v))
$$
is a right-module homomorphism over $\VE_\G: H^\bullet(\G) \to H^{\bullet}(\g)$. Moreover, 
if $\G$ is source $p_0$-connected, then $\VE_{VB}$ is an isomorphism in degree $p$, for all $p\leq p_0$ and it is injective for $p=p_0+1$.
\end{corollary}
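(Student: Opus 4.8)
The plan is to deduce Corollary \ref{cor:van_VB} by combining Theorem \ref{main} (the Van Est isomorphism for $1$-homogeneous cochains) with Lemma \ref{lma:vbgc0} (which identifies the cohomology of $C^\bullet_{VB}(\V)$ with that of $C^\bullet_{1\mbox{-}\hom}(\V)$), together with an analogous identification on the algebroid side. The key structural observation is that $CE^\bullet_{VB}(\v)$ is defined to be \emph{all} of $CE^\bullet_{1\mbox{-}\hom}(\v)$, so that on the infinitesimal side there is nothing to prove: $H^\bullet(CE_{VB}(\v)) = H^\bullet_{1\mbox{-}\hom}(\v)$ on the nose. So the only comparison needed is the groupoid one.

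First I would assemble the commuting square of chain maps: the inclusion $\i: C^\bullet_{VB}(\V) \hookrightarrow C^\bullet_{1\mbox{-}\hom}(\V)$, the two Van Est maps $\VE_{VB}$ and $\VE_{1\mbox{-}\hom}$, and the identity $CE^\bullet_{VB}(\v) = CE^\bullet_{1\mbox{-}\hom}(\v)$. By construction $\VE_{VB} = \VE_{1\mbox{-}\hom} \circ \i$, so this square commutes strictly at the cochain level, hence also in cohomology. Next, Lemma \ref{lma:vbgc0} tells us that $\i_*: H^\bullet(C_{VB}(\V)) \to H^\bullet_{1\mbox{-}\hom}(\V)$ is an isomorphism of $H^\bullet(\G)$-modules. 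Therefore $\VE_{VB}$ in cohomology is conjugate (via $\i_*$ on the source and the identity on the target) to $\VE_{1\mbox{-}\hom}: H^\bullet_{1\mbox{-}\hom}(\V) \to H^\bullet_{1\mbox{-}\hom}(\v)$, which is $\VE_{\khom}$ from Theorem \ref{main} in the case $k=1$. Thus $\VE_{VB}$ inherits verbatim the conclusions of that theorem: it is a module homomorphism over $\VE_\G: H^\bullet(\G)\to H^\bullet(\g)$ (the module structures being compatible with $\i_*$ because $\i$ is a map of right $C^\bullet(\G)$-modules, as noted before Lemma \ref{lma:vbgc0}), and if $\G$ has $p_0$-connected source fibers it is an isomorphism for $p \leq p_0$ and injective for $p = p_0+1$.

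I would also record why the source-connectivity hypothesis transfers unchanged: it is already handled inside the proof of Theorem \ref{main}, where one observes that the source fibers of $\V \toto E$ are affine bundles over those of $\G \toto M$ and hence equally connected; nothing new is needed here since $C^\bullet_{VB}(\V)$ is a subcomplex of $C^\bullet_{1\mbox{-}\hom}(\V)$ and we invoke Theorem \ref{main} as a black box.

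There is no serious obstacle: the statement is essentially a formal consequence of Theorem \ref{main} and Lemma \ref{lma:vbgc0}. The only point demanding a line of care is checking that the right $H^\bullet(\G)$-module structures match up under $\i_*$ — i.e. that the cup product of $C^\bullet_{VB}(\V)$ with $C^\bullet_{0\mbox{-}\hom}(\V) \cong C^\bullet(\G)$ lands in $C^\bullet_{VB}(\V)$ and agrees, via $\i$, with the module structure on $C^\bullet_{1\mbox{-}\hom}(\V)$ — but this is immediate from the formula \eqref{star_prod} for the cup product and was already asserted in the discussion preceding Lemma \ref{lma:vbgc0}. Hence the proof is short: cite Theorem \ref{main} for $k=1$, use Lemma \ref{lma:vbgc0} to replace $C^\bullet_{1\mbox{-}\hom}(\V)$ by $C^\bullet_{VB}(\V)$ in the source, note the target is unchanged by definition, and conclude.
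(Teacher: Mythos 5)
Your argument is correct and is precisely the (implicit) reasoning the paper intends: since $CE^\bullet_{VB}(\v)$ is by definition all of $CE^\bullet_{1\mbox{-}\hom}(\v)$ and Lemma \ref{lma:vbgc0} identifies $H^\bullet(C_{VB}(\V))$ with $H^\bullet_{1\mbox{-}\hom}(\V)$ as $H^\bullet(\G)$-modules, the corollary follows by transporting the $k=1$ case of Theorem \ref{main} along the inclusion. Nothing further is needed.
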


\paragraph{Cohomological vanishing for proper groupoids}
The VB-groupoid cohomology can be shown to be trivial in several cases as shown by the following proposition.
\begin{proposition}\label{prop:cohomovan}
When $\G$ is a proper groupoid or, more generally, admits a Haar system $d\mu$ together with a cut off function $c\in \C(M)$ (see, e.g., \cite{AC3} and the proof below), then $$H^p(C^\bullet_{VB}(\V))=0, \ p\geq 2.$$
\end{proposition}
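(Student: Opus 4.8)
The plan is to show that, for a proper groupoid (or one admitting a Haar system with a cut-off function), every VB-cocycle of degree $p \geq 2$ is a coboundary, by constructing an explicit contracting homotopy via averaging. Recall that $C^p_{VB}(\V)$ consists of $1$-homogeneous cochains $\phi$ on $B_p\V$ satisfying conditions \ref{cond1} and \ref{cond2}; by the remark after those conditions, such a $\phi$ depends only on $\xi_1$ and on the base points $g_i = \pi(\xi_i)$, and is left-invariant in the sense of \ref{cond2}. First I would recall the standard averaging operator $h: C^p(\G) \to C^{p-1}(\G)$ used to prove vanishing of differentiable cohomology for proper groupoids (see \cite{Cr}): given a Haar system $\{d\mu^x\}$ on the source fibers $\G_x = \sour^{-1}(x)$ and a cut-off function $c \in \C(M)$ (so that $\int_{\G_x} c(\tar(g))\, d\mu^x(g) = 1$ for all $x$), one sets
$$
(h f)(g_1, \dots, g_{p-1}) = \int_{\G_{\sour(g_1)}} c(\tar(g))\, f(g g_1^{-1}, \dots)\, d\mu \quad \text{(with the appropriate simplicial insertion)},
$$
which satisfies $\delta h + h \delta = \mathrm{id}$ in degrees $\geq 1$. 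The key point is that this same formula makes sense on $B_p\V$ because, by Lemma \ref{p_vector}, the face and degeneracy maps are vector bundle maps, and because integration over the (compact-support, by the cut-off) source fiber is a \emph{fiberwise-linear} operation: it commutes with the $\R_+$-homogeneous structure $h_\l^{B_p\G}$, hence preserves $1$-homogeneity.

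Next I would check that $h$ preserves the two defining conditions of $C^\bullet_{VB}$, so that it restricts to an operator $C^p_{VB}(\V) \to C^{p-1}_{VB}(\V)$. Condition \ref{cond1} ($\phi$ vanishes when the first entry is a zero vector) is preserved because the averaging integrates over a variable that is \emph{inserted into the first slot}, so $(h\phi)(0_g, \xi_2, \dots)$ involves $\phi$ evaluated on tuples whose first entry is of the form (zero-vector composed with the integration variable's lift), and here one uses condition \ref{cond2} together with \ref{cond1} of $\phi$ to conclude the integrand vanishes; condition \ref{cond2} (left-invariance) is preserved because it is a pointwise identity in the arguments not touched by the integration and the integral is over the remaining direction. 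Then the homotopy identity $\delta h + h \delta = \mathrm{id}$, which already holds on all of $C^\bullet(\V)$ in degrees $\geq 1$ by Crainic's argument applied to the Lie groupoid $\V \toto E$ (whose source fibers are affine bundles over those of $\G$, hence also carry a Haar system induced from that of $\G$ together with the linear Haar measure on the fibers — but in fact it is cleaner to run the homotopy purely over $\G$, integrating only the $\G$-variable), restricts to $C^\bullet_{VB}(\V)$. Therefore any $\phi \in C^p_{VB}(\V)$ with $\delta\phi = 0$ and $p \geq 2$ satisfies $\phi = \delta(h\phi)$ with $h\phi \in C^{p-1}_{VB}(\V)$, giving $H^p(C^\bullet_{VB}(\V)) = 0$.

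The main obstacle I expect is the bookkeeping in verifying that the averaging operator genuinely lands back in $C^\bullet_{VB}(\V)$ — specifically, that conditions \ref{cond1} and \ref{cond2} survive. The subtlety is that the homotopy operator in Crainic's proof inserts the integration variable in a way that interacts nontrivially with the first slot of the cochain, which is precisely the slot singled out by conditions \ref{cond1}--\ref{cond2}; one must choose the variant of the homotopy (there are several, differing by which face map is used) so that the insertion respects left-invariance, and then use the interplay of \ref{cond1} and \ref{cond2} for $\phi$ itself exactly as in the proof of Lemma \ref{lma:vbgc0}. A secondary (but routine) point is confirming that the cut-off/Haar data on $\G$ suffices — no extra data on $E$ or $\V$ is needed — because the $1$-homogeneity reduces every cochain to something linear in $\xi_1$ over the base tuple, so only integration along the $\G$-direction is ever required. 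Once the operator is correctly set up, the homotopy identity is inherited for free from the known vanishing of $H^{\geq 1}$ of proper Lie groupoids, so no new homological input is needed.
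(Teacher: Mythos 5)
Your overall strategy --- averaging against the Haar system and the cut-off function --- is the same as the paper's, but the proposal stops short of the one step that carries the actual content. To average a cochain $\phi\in C^p_{VB}(\V)$ you must insert the integration variable $h\in\G$ into a slot of $B_p\V$, i.e.\ you must lift $h$ to an arrow of $\V$ composable with the others; your formula is written entirely downstairs on $\G$ and never says how this lift is chosen nor why the result is independent of the choice. The paper resolves this by inserting the lift $\sigma(h,\sour_\V(\xi_{p-1}))$ into the \emph{last} slot, where $\sigma$ is an arbitrary linear splitting of $\tar_\V:\V\to\tar^*E$: independence of $\sigma$ is exactly the consequence of condition 1, namely that a VB-cochain depends only on $\xi_1$ and the base arrows, so slots beyond the first only see the base. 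Your choice of first-slot insertion is the one place where this reasoning fails --- the first slot is precisely where $\phi$ carries its linear dependence, so there the choice of lift genuinely matters and the averaged cochain is not well defined without further input. You correctly flag the slot-choice issue as the main obstacle, but resolving it (by moving the insertion to the last slot and invoking condition 1) is the proof, not a routine check.

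A second gap: you want to inherit the full homotopy identity $\delta h+h\delta=\mathrm{id}$ ``for free'' from Crainic's vanishing theorem applied to $\V\toto E$. But $\V\toto E$ need not be proper even when $\G$ is (the fibers of $(\sour_\V,\tar_\V)$ contain positive-dimensional affine subspaces in general), and no Haar system or cut-off on $\V\toto E$ is provided; your own parenthetical retreat to ``integrating only the $\G$-variable'' brings you back to the undefined operator above. The paper sidesteps all of this: it never constructs a contracting homotopy on $C^\bullet(\V)$, but only proves the weaker identity $\delta\kappa(\phi)=(-1)^p\phi$ \emph{for cocycles} $\phi\in C^p_{VB}(\V)$, $p\geq 2$, by substituting $\delta\phi=0$ inside the integral and then using left-invariance of the Haar system together with the normalization of $c$. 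That weaker identity suffices for the vanishing statement, and the short computation establishing it is where the work actually lies.
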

\begin{proof}
The idea is to define a map $C_{VB}^p(\V) \ni \phi \mapsto \kappa(\phi)\in C_{VB}^{p-1}(\V), p\geq 2$ by the formula 
$$ 
\kappa(\phi)(\xi_1,..,\xi_{p-1}) = \int_{\tar^{-1}(\sour(g_{p-1}))} \phi(\xi_1,..,\xi_{p-1},\sigma(h,\sour_\V(\xi_{p-1}))) \ c(s(h)) \ d\mu(h),
$$
where $g_i=\pi(\xi_i)\in\G,i=1,..,p-1$ as before and $\sigma: \tar^*E \to \V$ is any linear splitting of the epimorphism $\tar_\V:\V \to \tar^*E$. Notice that the r.h.s. in the formula above is independent of the choice of $\sigma$ since $\phi$ only depends on $(g_1,..,g_{p-1},h)$ and $\xi_1$. The key point is that, for $\delta \phi = 0, \phi \in C^p_{VB}(\V), p\geq 2,$ then $\delta \kappa(\phi) =(-1)^p \phi$,  hence leading to the above cohomological vanishing. This statement can be checked by direct computation: let us denote $\xi_{p+1}(h) = \sigma(h,\sour_\V(\xi_{p}))$ for $h\in \tar^{-1}(\sour(g_{p}))$ and $\eta_p(k)=\sigma(k,\sour_\V(\xi_{p-1}))$ for $k\in \tar^{-1}(\sour(g_{p-1}))$, then
\begin{align*}
 \delta \kappa(\phi) (\xi_1,..,\xi_p) & =  \int_{\tar^{-1}(\sour(g_{p}))}\left[ \phi(\xi_2,..,\xi_{p},\xi_{p+1}(h))+\sum_{i=1}^{p-1}(-1)^i \phi(\xi_1,.., \xi_i\xi_{i+1},..,\xi_p,\xi_{p+1}(h)) \right] \ c(s(h)) \ d\mu(h) \\
 & \hspace{-20pt} + (-1)^p \int_{\tar^{-1}(\sour(g_{p-1}))} \phi(\xi_1,..,\xi_{p-1},\eta_p(k)) \ c(s(k)) \ d\mu(k) \\
 & =  (-1)^p \int_{\tar^{-1}(\sour(g_{p}))} [- \phi(\xi_1,..,\xi_{p-1},\xi_p\xi_{p+1}(h)) + \phi(\xi_1,..,\xi_p) ]\ c(s(h)) \ d\mu(h)  \\
  & \hspace{-20pt} + (-1)^p \int_{\tar^{-1}(\sour(g_{p-1}))} \phi(\xi_1,..,\xi_{p-1},\eta_p(k)) \ c(s(k)) \ d\mu(k) \\
 % &=& (-1)^p \phi(\xi_1,..,\xi_p) - (-1)^p \int_{\tar^{-1}(\sour(g_{p}))}  \phi(\xi_1,..,\xi_{p-1},\xi_p\xi_{p+1}(h)) \ c(s(h)) \ d\mu(h) \\ 
  %& & + (-1)^p \int_{\tar^{-1}(\sour(g_{p-1}))} \phi(\xi_1,..,\xi_{p-1},\eta_p(k)) \ c(s(k)) \ d\mu(k) \\
  &= (-1)^p \phi(\xi_1,..,\xi_p).
\end{align*}
Above, the first equality follows from the definitions of $\delta$ and $\kappa$, the second equality follows by applying $\delta \phi = 0$ inside the square brackets and, finally, the third equality follows by the normalization condition $\int_{\tar^{-1}(x)} c(s(h)) \ d\mu(h) = 1$ and by the left invariance of the measure $\int_{\tar^{-1}(s(g))} f(gh) \ d\mu(h) = \int_{\tar^{-1}(\tar(g))} f(k) \ d\mu(k)$ together with the independence of $\phi(\xi_1,..,\xi_p)$ on the $\xi_j$'s for $j>1$, as was mentioned before.
\end{proof}

%%%%%%%%%%%%%%%% rigidity conjecture
Let us now mention an application of the above general vanishing result, following \cite{CrMo}.
Given a Lie algebroid $\g \to M$, there exists a complex $C^\bullet_{\mathrm{def}}(\g)$ controlling the deformations of $\g$ and which is related to VB-cohomology as follows. Consider the induced linear Poisson structure on $\g^*$, $\pi \in \Gamma(\wedge^2 T\g^*)$. The cotangent Lie algebroid $T^*\g\to \g^*$ has the property that its Chevalley-Eilenberg complex  $(\CE(T^*\g),  d)$ is isomorphic to the Poisson complex $(\mathfrak{X}(\g^*), [\pi,\cdot])$ (see \cite{Mac-Xu}). Under this isomorphism, the subcomplex $\CE^\bullet_{VB}(T^*\g)\subset \CE^\bullet(T^*\g)$ corresponds to the so called {\em linear Poisson complex} $\mathfrak{X}_{\ell in}(\g^*)$ of $\g^*$. On the other hand, Proposition 7 in \cite{CrMo} shows that $\mathfrak{X}^\bullet_{\ell in}(\g^*) \cong C^\bullet_{\mathrm{def}}(\g)$, so that
$$
\CE^\bullet_{VB}(T^*\g)\cong \mathfrak{X}^\bullet_{\ell in}(\g^*) \cong C^\bullet_{\mathrm{def}}(\g).
$$  
On the groupoid side, for $\G\rightrightarrows M$ a Lie groupoid,  the complex $C_{VB}(T^*\G)$  was shown in \cite{CrMeSt} to be isomorphic to the complex $C_{\mathrm{def}}(\G)$ controlling deformations of the Lie groupoid structure. 

In this context, Corollary \ref{cor:van_VB} recovers a result from \cite{CrMeSt}: the map
$$
\VE_{\mathrm{def}}: H^\bullet_{\mathrm{def}}(\G) \to H^\bullet_{\mathrm{def}}(\g)
$$
defines a (graded) module homomorphism covering $\VE_\G: H^\bullet(\G) \to H^{\bullet}(\g)$ which induces isomorphisms in degrees $p \leq p_0$ and monomorphism in degree $p = p_0+1$ when $\G$ is source $p_0$-connected. 

By combining this result with our general vanishing criteria (Proposition \ref{prop:cohomovan} above), we further obtain an independent proof of the (cohomological) rigidity conjecture of \cite{CrMo}: if $\G$ is proper and source 2-connected, then $H^2_{\mathrm{def}}(\g)=0$. Note that the map $\VE_{\mathrm{def}}$ is the ``\textit{lin}-version'' of the Van Est map which was assumed to exist in \cite{CrMo} as a step towards proving their conjecture. %and that we are thus following exactly their original strategy.
 %By the above discussion, we only need to check that $H^2_{VB}(T^*\G)=0$ for a proper groupoid $\G$. 

\begin{remark}
The conjecture was originally proved in \cite{AS} using a Van Est result for representations up to homotopy. In particular, they use a vanishing result for cohomologies with coefficients in representations up to homotopy established in \cite{AC3}. Our vanishing result should be considered as a geometric counterpart to theirs in the 2-term case (see below).  
\end{remark}

%%%%%%%%%%%%%%%%%%%

\subsection{Splittings and representations up to homotopy}
VB-groupoids and VB-algebroids can be (non-canonically) \emph{split} into the base Lie groupoid/algebroid data and representation-like information on the fibers (recall examples \ref{ex:repG}, \ref{example:rep}). It turns out that the correct notion encoding this split data is that of a ($2$-term) \emph{representations up to homotopy} (\cite{AC2, AC3,GM08,GM10}), which we now recall.

%Let $E$ and $C$ be vector bundles over $M$,
Let $\G \rightrightarrows M$ be a Lie groupoid with Lie algebroid $\g \to M$ and $\E = C[1] \oplus E$ a graded vector bundle over $M$ with $C$ in degree $-1$ and $E$ in degree $0$.
% Consider the graded vector bundle $\E = C \oplus E$ with $C = C$ concentrated in degree $-1$ and $E=E$ in degree $0$. 
The associated space of $\E$-valued (normalized) $p$-cochains is defined as
\comment{Change $\mu_{-1}, \mu_0$ to $\mu_C, \mu_E$ and $\E=\E_{-1}\oplus E_0$ to $C[1]\oplus E$.}
$$
C(\G,\E)^p := \{\mu:=(\muE, \muC) \in \Gamma(B_p\G; \tar_p^*E) \oplus \Gamma(B_{p+1}\G; \tar_{p+1}^*C) \,\, | \,\, s_i^* \muE = 0, s_i^*\muC=0\},
$$
where $s_i: B_{\bullet} \G \to B_{\bullet+1} \G$ is the $i$-th degeneracy map. There is a (right) $C^{\bullet}(\G)$-module structure on $C(\G,\E)^{\bullet}$ defined by 
$
\mu \star f = (\muE \star f, \muC \star f),
$
where each component is given by formula \eqref{star_prod}.
A representation up to homotopy of $\G$ on $\E$ is a $\R$-linear map $\D_\G: C(\G,\E)^\bullet \to  C(\G,\E)^{\bullet+1}$ satisfying
$\D_\G^2=0$ and
$$
\D_\G(\mu \star f) = \D_\G(\mu) \star f + (-1)^{p} \mu \star (\delta f), \,\, \mu \in C(\G, \E)^p, f \in C^{p'}(\G).
$$
The resulting cohomology is denoted by $H(\G,\E)$. Note that $\star$ defines a right $H(\G)$-module structure on $H(\G, \E)$.

A representation up to homotopy on $\E$ can be alternatively given by quasi-actions $\Delta^{E}, \Delta^{C}$ of $\G$ on $E$ and $C$, respectively, a bundle map $\partial: C \to E$ and a smooth correspondence which, for each $(g_1, g_2) \in B_2 \G$, gives a linear map $\Omega_{(g_1,g_2)}: E|_{\sour(g_2)} \to C|_{\tar(g_1)}$, satisfying certain structural equations (see \cite{AC3, GM10}).
%\paragraph{Relation with VB-groupoids}
Moreover, in analogy with the case of an ordinary representation (c.f. Example \ref{ex:repG}), a representation up to homotopy of $\G$ on $\E$ endows $\V = \sour^*E^* \oplus_{\G} \tar^*C^* \toto C^*$ with a VB-groupoid structure \cite{GM10}. The structure maps are given by
\begin{equation}\label{vb:struct}
\begin{array}{rl}
\sour_\V(\xi, g, \eta)& \!\! = (\Delta^{C}_g)^*\xi - \partial^*\eta, \,\,\tar_\V(\xi, g, \eta) = \xi, \,\,\,\,\, \xi \in C^*|_{\tar(g)},\, \eta \in E^*|_{\sour(g)}\\
(\xi_1, g_1, \eta_1)\cdot(\xi_2, g_2, \eta_2)  & \!\! =(\xi_1, \,g_1g_2,\, \Omega_{(g_1,g_2)}^*\xi_1 + (\Delta_{g_2}^{E})^*\eta_1 + \eta_2),\\
\end{array}
\end{equation}
for compatible arrows and $\1_\V(\xi) = (\xi, 1_m, 0)$, for $\xi \in C^*|_m$. Finally, in \cite{GM10} the authors show that every VB-groupoid can be presented (non-canonically) in this form, thus establishing a correspondence between VB-groupoids and $2$-term representations up to homotopy of $G$.

The above correspondence between VB-groupoid structures and representations up to homotopy can be understood from the following relation between the cochain complex associated to $\E$ and that of $1$-homogeneous cochains on $\V$. Consider the map
$
\Psi: C(\G, \E)^p  \to C^\infty_{1\mbox{-}\hom}(B_{p+1}\V)
$
defined by 
\begin{equation}\label{def:phi}
\Psi(\mu)((\xi_1,g_1, \eta_1),..,(\xi_{p+1}, g_{p+1}, \eta_{p+1})) = \langle \eta_1, \muE(g_2,..,g_{p+1}) \rangle + \langle \xi_1,\muC(g_1,..,g_{p+1}) \rangle.
\end{equation}
In \cite{GM10} (see  Theorem $5.6$), it is proven that $\Psi: C(\G, \E)^{\bullet} \to C_{1\mbox{-}\hom}^{\bullet+1}(\V)$ is a monomorphism of graded $C(\G)$-modules satisfying
\[ \Psi \circ (-\D_\G) = \delta \circ \Psi\]
whose image coincides with the VB-groupoid cochain complex $C_{VB}^\bullet(\V) \subset C_{1\mbox{-}\hom}^{\bullet}(\V)$ (shifted by one, hence the minus sign in the eq. above).
We then obtain, as a direct consequence of Lemma \ref{lma:vbgc0},
\begin{lemma}\label{lma:VBG_cohom}
The induced map in cohomology $\Psi: H^p(\G,\E) \to H^{p+1}_{1\mbox{-}\hom}(\V)$ is an isomorphism of right $H^\bullet(\G)$-modules for all $p$.
\end{lemma}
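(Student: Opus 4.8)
The plan is to factor the cochain map $\Psi$ through the VB-groupoid cochain complex and then combine the two structural facts already at our disposal: the result of \cite{GM10} that $\Psi$ is an injective cochain map onto $C_{VB}^{\bullet+1}(\V)\subset C_{1\mbox{-}\hom}^{\bullet+1}(\V)$, and Lemma \ref{lma:vbgc0}, which says that the inclusion $\i\colon C_{VB}^\bullet(\V)\hookrightarrow C_{1\mbox{-}\hom}^\bullet(\V)$ is a quasi-isomorphism of right $H^\bullet(\G)$-modules.

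First I would dispense with the sign discrepancy: the identity $\Psi\circ(-\D_\G)=\delta\circ\Psi$ costs nothing, since replacing a differential $D$ by $-D$ leaves the cocycles, the coboundaries, the cohomology, and any compatible module structure unchanged. Thus, for cohomological purposes, $\Psi$ is a morphism from $(C(\G,\E)^\bullet,\D_\G)$ to $(C_{1\mbox{-}\hom}^{\bullet+1}(\V),\delta)$, and by construction it is a morphism of graded right $C(\G)$-modules, since formula \eqref{def:phi} is visibly $\star$-linear. Since $\Psi$ is injective with image precisely $C_{VB}^{\bullet+1}(\V)$, it is an isomorphism of cochain complexes onto the subcomplex $(C_{VB}^{\bullet+1}(\V),\delta)$; hence it induces an isomorphism $H^p(\G,\E)\cong H^{p+1}(C_{VB}(\V))$ of right $H^\bullet(\G)$-modules (minding the degree shift by one).

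It then remains only to compose with the isomorphism $H^{p+1}(C_{VB}(\V))\cong H^{p+1}_{1\mbox{-}\hom}(\V)$ induced by $\i$, which is exactly the content of Lemma \ref{lma:vbgc0}; the composite $\i\circ\Psi$ is the map $\Psi$ of the statement, now displayed as a composition of two module isomorphisms. I do not expect any genuine obstacle here: the substantive work has already been done — the homotopy-correction argument producing a VB-representative for each $1$-homogeneous cocycle is Lemma \ref{lma:vbgc0}, and the identification $\mathrm{im}\,\Psi=C_{VB}^{\bullet+1}(\V)$ is imported from \cite{GM10}. The only points requiring a modicum of care are keeping the degree shift straight and checking that the $H^\bullet(\G)$-module structures match along both $\Psi$ and $\i$, both of which are immediate from the $\star$-linearity at the level of cochains.
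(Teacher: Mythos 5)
Your argument is exactly the one the paper intends: factor $\Psi$ through its image $C_{VB}^{\bullet+1}(\V)$ using the cited result of \cite{GM10}, observe that the sign in $\Psi\circ(-\D_\G)=\delta\circ\Psi$ is harmless in cohomology, and conclude by composing with the quasi-isomorphism of Lemma \ref{lma:vbgc0}. The paper states the lemma as a direct consequence of Lemma \ref{lma:vbgc0} for precisely these reasons, so your proposal matches its proof.
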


\paragraph{The infinitesimal counterpart.}
Let $\g$ be a Lie algebroid, $\E$ as before, and consider
$$
\Omega(\g, \E)^p = \Gamma(\Lambda^p \g^*\otimes E) \oplus \Gamma(\Lambda^{p+1} \g^* \otimes {C}).
$$
The space $\Omega(\g, \E)$ is a right $\Gamma(\Lambda^\bullet \g^*)$-module with multiplication defined by wedge product on the right on the $\Lambda^\bullet \g^*$ factor. A representation up to homotopy of $\g$ on $\E$ is a $\R$-linear map $\D_\g: \Omega(\g, \E)^{\bullet} \to \Omega(\g, \E)^{\bullet+1}$ satisfying $\D_\g^2=0$ and 
$
\D_\g(\omega \wedge \beta) = \D_\g(\omega) \wedge \beta + (-1)^p \omega \wedge d \beta, \, \,  \omega \in \Omega(\g, \E)^p, \, \beta \in \Gamma(\Lambda \g^*).
$
We denote the cohomology of $(\Omega(\g, \E), \D_\g)$ by $H(\g, \E)$.
%Alternatively, a representation up to homotopy of $\g$ on $\E$ amounts to $\g$-connections $\nabla^{E}, \nabla^C$ on $E$ and $C$, respectively, a map $\partial: C \to E$ and an element $R \in \Gamma(\Lambda^2\g^* \otimes Hom(E, C))$ satisfying certain compatibilities \cite{AC2,GM08}.

As in the VB-groupoid case, VB-algebroid structures on  $\v= C^* \times_M \g \times_M E^* \to C^*$ are in $1:1$ correspondence with representations up to homotopy of $\g$ on $\E = C[1]\oplus E$ (see \cite{GM08}). We recall here how this correspondence can be seen from the cohomological perspective. The space of sections $\Gamma(C^*, \v)$ is generated, as a $C^\infty(C^*)$-module, by sections:
\begin{align*}
\chi_u(\xi) = (\xi, u(m),0),  \,\,\Upsilon_\eta(\xi) = (\xi, 0, \eta(m)),
\end{align*}
for $\xi \in C^*|_m, u \in \Gamma(\g), \eta \in \Gamma(E^*)$.  Define a map 
\begin{equation}\label{evmap}
ev: \CE^{p+1}_{1\mbox{-}\hom}(\v) \to \Omega(\g, \E)^{p}, \ ev(\alpha) = (\hat{\alpha}_E, \hat{\alpha}_{C}),
\end{equation}
where $\hat{\alpha}_E \in \Gamma(\Lambda^p \g^*\otimes E)$ and $\hat{\alpha}_{C} \in \Gamma(\Lambda^{p+1} \g^*\otimes {C})$, by
\begin{align*}
\<\hat{\alpha}_E(u_1, \dots, u_p), \eta\> & = \alpha(\Upsilon_\eta, \chi_{u_1}, \dots, \chi_{u_p}) \in C^{\infty}_{0\mbox{-}\hom}(C^*) \cong C^\infty(M) \\
\hat{\alpha}_{C}(u_1, \dots, u_{p+1}) & = \alpha(\chi_{u_1}, \dots, \chi_{u_{p+1}}) \in C^{\infty}_{1\mbox{-}\hom}(C^*) \cong \Gamma(C),
\end{align*}
for $u_1, \dots, u_{p+1} \in \Gamma(\g), \eta \in \Gamma(E^*)$.

\begin{lemma}\label{lemma:ev}
Under the identification $\Gamma(\Lambda^\bullet \g^*) \cong CE_{hom,0}(\v)$, the map $ev$ is a (right) $\Gamma(\Lambda^\bullet \g^*)$-module isomorphism. 
\end{lemma}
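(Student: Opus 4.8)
The plan is to verify directly that $ev$ is a well-defined, $\R$-linear bijection intertwining the two right module structures. First I would make the module structures explicit: on the algebroid side, $\CE^{\bullet}_{1\mbox{-}\hom}(\v)$ carries the $\CE^\bullet_{0\mbox{-}\hom}(\v) \cong \Gamma(\Lambda^\bullet \g^*)$-module structure coming from the wedge product $\Gamma_{1\mbox{-}\hom} \times \Gamma_{0\mbox{-}\hom} \to \Gamma_{1\mbox{-}\hom}$ (the remark following Example~\ref{example:rep}), while on $\Omega(\g,\E)^\bullet$ the action is wedge product on the right in the $\Lambda^\bullet\g^*$ factor of each summand. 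Under the identification $\beta \in \Gamma(\Lambda^{p'}\g^*) \leftrightarrow \pi_M^*\beta \in \CE^{p'}_{0\mbox{-}\hom}(\v)$ (pulled back along $\pi_M \colon C^* \to M$ composed with the projection $\v \to \g$), one has $\alpha \wedge \pi_M^*\beta$ evaluated on the generating sections $\Upsilon_\eta, \chi_{u_1}, \dots$, and the key point is that $\pi_M^*\beta$ annihilates any $\Upsilon_\eta$ and restricts to $\beta$ on the $\chi_u$'s. A Leibniz/shuffle expansion of the wedge then shows $\<\widehat{(\alpha\wedge\pi_M^*\beta)}_E(u_1,\dots,u_{p+p'}),\eta\> = \<(\hat\alpha_E \wedge \beta)(u_1,\dots,u_{p+p'}),\eta\>$ and similarly for the $C$-component, i.e.\ $ev(\alpha \wedge \pi_M^*\beta) = ev(\alpha)\wedge\beta$.

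Next I would check $ev$ is an isomorphism of $C^\infty(M)$-modules (hence, together with the above, of $\Gamma(\Lambda^\bullet\g^*)$-modules). The inverse is constructed by prescribing a form $\alpha \in \CE^{p+1}_{1\mbox{-}\hom}(\v)$ on the generators $\chi_u, \Upsilon_\eta$: given $(\omega_E,\omega_C) \in \Omega(\g,\E)^p$, set $\alpha(\chi_{u_1},\dots,\chi_{u_{p+1}}) = \ell_{\omega_C(u_1,\dots,u_{p+1})}$ (the $1$-homogeneous function on $C^*$ associated to the section $\omega_C(u_1,\dots,u_{p+1}) \in \Gamma(C)$), $\alpha(\Upsilon_\eta,\chi_{u_1},\dots,\chi_{u_p}) = \pi_M^*\<\omega_E(u_1,\dots,u_p),\eta\>$, and $\alpha(\Upsilon_{\eta_1},\Upsilon_{\eta_2},\chi_{u_1},\dots) = 0$, extended $C^\infty(C^*)$-multilinearly and antisymmetrically. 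One must check this is consistent: since $\chi, \Upsilon$ generate $\Gamma(C^*,\v)$ over $C^\infty(C^*)$ but with relations (the values of $\alpha$ on products $f\chi_u$ must be $f\cdot$(value on $\chi_u$), and $h_\l^{\g*}$-homogeneity of degree one must hold), the homogeneity degrees are exactly matched — $\ell_{(\cdot)}$ is $1$-homogeneous, $\pi_M^*(\cdot)$ is $0$-homogeneous, and the two-$\Upsilon$ term vanishes — so the prescribed $\alpha$ lies in $\CE^{p+1}_{1\mbox{-}\hom}(\v)$ and $ev$ of it returns $(\omega_E,\omega_C)$. Conversely any $\alpha \in \CE^{p+1}_{1\mbox{-}\hom}(\v)$ is determined by its values on generators and the degree-$1$ homogeneity forces $\alpha(\Upsilon_{\eta_1},\Upsilon_{\eta_2},\dots) = 0$ and $\alpha(\Upsilon_\eta,\chi_{\bullet}) \in C^\infty_{0\mbox{-}\hom}(C^*)$, $\alpha(\chi_\bullet) \in C^\infty_{1\mbox{-}\hom}(C^*)$, so $ev$ is injective; surjectivity is the inverse construction.

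I expect the main obstacle to be the well-definedness/consistency of the inverse map — i.e.\ confirming that prescribing $\alpha$ on the (non-free) generating set $\{\chi_u, \Upsilon_\eta\}$ via $C^\infty(C^*)$-multilinearity actually yields a globally well-defined, smooth, $1$-homogeneous section of $\Lambda^{p+1}\v^*$, rather than just a densely-defined expression. This is handled by the observation (used already in Example~\ref{example:rep}) that $\v \to C^*$ is, fiberwise over $M$, spanned by the constant sections coming from $\g$ and $E^*$, so the "evaluation on generators" pairing extends uniquely; the homogeneity bookkeeping then falls out because $\ell_{(\cdot)}$ and $\pi_M^*(\cdot)$ carry the correct weights, as noted above. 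The module-map property and $\R$-linearity are then routine. Note this lemma is purely algebraic — it makes no reference to the differentials $\D_\g$, $d$ — so no compatibility with Chevalley--Eilenberg differentials needs to be checked here (that is presumably the content of a subsequent lemma).
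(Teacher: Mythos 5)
Your proof is correct and rests on the same structural fact the paper uses: a $1$-homogeneous section of $\Lambda^{p+1}\v^*$ is freely and uniquely determined by its ($0$-homogeneous) values on $(\Upsilon_\eta,\chi_{u_1},\dots,\chi_{u_p})$ and its ($1$-homogeneous) values on $(\chi_{u_1},\dots,\chi_{u_{p+1}})$, all evaluations involving two $\Upsilon$'s being forced to vanish by homogeneity. The paper establishes this by writing the local normal form of $\alpha$ in frames for $C^*$, $\g^*$, $E$ and matching coefficients, whereas you argue invariantly via pointwise spanning of the fibers and homogeneity-weight bookkeeping; the content is the same, and your explicit verification of the module property (via $\pi_M^*\beta$ killing the $\Upsilon$'s) is likewise sound.
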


\begin{proof}
Let $\{\xi^k\}_{k=1}^{\mathrm{rank}(C^*)}$, $\{\gamma^j\}_{j=1}^{\mathrm{rank}(\g^*)}$ and $\{e_i\}_{i=1}^{\mathrm{rank}(E)}$ be local frames for $C^*$, $\g^*$ and $E$ respectively. We shall identify $e_i$ (resp. $\gamma^j$) with the corresponding section of $\v^*$: $C^*|_m \ni \xi \mapsto (\xi, 0, e_i(m))$ (resp. $\xi \mapsto (\xi, \gamma^j(m), 0)$). Locally, any element of $\alpha \in CE_{1\mbox{-}\hom}^{p+1}(\v)$ is written
$$
\alpha(m, \xi) = a_k A_{j_1 \dots j_{p+1}}^k(m) \gamma^{j_1} \wedge \dots \gamma^{j_{p+1}} + B_{j_1 \dots j_{p}}^i(m) \, e_i \wedge \gamma^{j_1} \wedge \dots \gamma^{j_{p}},
$$
where $\xi = a_k \,\xi^k(m)$. From the definition, one sees that
\begin{align*}
A_{j_1 \dots j_{p+1}}^k(m) & = \<\hat{\alpha}_{C}(u_{j_1}, \dots, u_{j_{p+1}}), \xi^k(m)\>\\
B_{j_1 \dots j_{p}}^i(m) & = \<\hat{\alpha}_{E}(u_{j_1, \dots, u_{j_p}}), \eta^i(m)\>,
\end{align*}
where $\{u_j\}$, $\{\eta^i\}$ are local frames for $\g$ and $E^*$ dual to $\{\gamma^j\}$, $\{e_i\}$, respectively. It is now straightforward to prove the statement.
\end{proof}

Hence, the operator $\D_\g$ defined by $\D_\g \circ ev= ev \circ (-d)$, where $d$ is the Chevalley-Eilenberg differential of $\v$, defines a representation up to homotopy of $\g$ on $\E$. (Note that $ev$ shifts degree by minus one, hence the sign in the definition of $\D_\g$.) It is shown in \cite{GM08} that, moreover, every VB-algebroid can be split as $\v\simeq \C^* \times_M \g \times_M E^* \to C^*$, thus establishing a correspondence between VB-algebroids and $2$-term representations up to homotopy of $\g$.

Given a representation up to homotopy $\D_\G: C(\G, \E) \to C(\G, \E)$ of $\G$ on $\E$, the VB-groupoid $\V \toto C^*$ defined by \eqref{vb:struct}, seen as a Lie groupoid over $C^*$, has a Lie algebroid whose underlying bundle is precisely $\v= C^* \times_M \g \times_M E^* \to C^*$. In this case, the above construction of $\D_\g$ can understood as the differentiation of the representation $\D_\G$, namely, $\D_\g= Lie(\D_\G)$. (See also \cite{AS}.)

\begin{remark}
A representation up to homotopy of $\g$ on $\E$ can be alternatively described by a map $\partial: C \to E$, $\g$-connections $\nabla^{E}, \nabla^{C}$ on ${E}$ and ${C}$, respectively and a curvature term $R \in \Gamma(\Lambda^2 \g^*\otimes Hom({E},{C}))$ satisfying some compatibility equations (see \cite{AC2, GM08}). We refer to \cite{BCO} for the formulas of the operators $(\partial, \nabla^{E}, \nabla^{C}, R)$ corresponding to $Lie(\D_\G)$ in terms of the data defining $D_\G$.
\end{remark}

\subsection{The Van Est theorem for representations up to homotopy}
Let us define $\VE_{rep}: C(\G, \E)^{p} \to \Omega(\g, \E)^p$ by $\VE_{rep}:= ev \circ VE_{1\mbox{-}\hom} \circ \Psi$. Diagramatically,
\begin{equation}
\begin{xy}
(0,35)*+{C(G, \E)^k}="A"; (40,35)*+{C_{1\mbox{-}\hom}^{\infty}(\V^{(k+1)})}="B"; 
(0,20)*+{\Omega(\g, \E)^k}="C"; (40,20)*++{\CE_{1\mbox{-}\hom}^{k+1}(\v)}="D";
{\ar@{->}^{\Psi} "A"; "B"}
{\ar@{->}^{\VE_{1\mbox{-}\hom}}"B";"D"}
{\ar@{<-}^{ev}"C";"D"}
{\ar@{->}_{\VE_{rep}}"A";"C"}
\end{xy}
\end{equation}
It is clear from the previous discussion that $\VE_{rep}$ is a chain map.

\begin{theorem}\label{thm:rep}
The Van Est map $\VE_{rep}: H^\bullet(\G, \E) \to H^\bullet(\g, \E)$ is a right module homomorphism over $\VE_\G: H^\bullet(\G) \to H^\bullet(\g)$. Moreover, if $\G$ is source $p_0$-connected, then the induced map in cohomology $\VE_{rep}: H^p(\G, \E) \to H^p(\g, \E)$ is an isomorphism for $-1 \leq p \leq p_0-1$ and it is injective for $p=p_0$.
\end{theorem}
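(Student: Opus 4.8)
The plan is to deduce Theorem~\ref{thm:rep} directly from the main theorem (Theorem~\ref{main}) applied to $1$-homogeneous cochains, together with the three identifications already established: Lemma~\ref{lma:VBG_cohom} on the groupoid side, Lemma~\ref{lemma:ev} (and the definition $\D_\g = \mathrm{ev}\circ(-d)\circ \mathrm{ev}^{-1}$) on the algebroid side, and the definition $\VE_{rep} := \mathrm{ev}\circ\VE_{1\mbox{-}\hom}\circ\Psi$ that makes the diagram commute. In other words, nothing new needs to be computed; one only has to assemble the pieces and keep track of the degree shift by one.

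First I would observe that by Theorem~\ref{main} with $k=1$, the map $\VE_{1\mbox{-}\hom}: H^{q}_{1\mbox{-}\hom}(\V) \to H^{q}_{1\mbox{-}\hom}(\v)$ is a module homomorphism over $\VE_\G$, and is an isomorphism for $q \leq p_0$ and injective for $q = p_0+1$, using that $\G$ source $p_0$-connected forces $\V\toto C^*$ to be source $p_0$-connected (each source fiber of $\V$ is an affine bundle over the corresponding source fiber of $\G$). Next, by Lemma~\ref{lma:VBG_cohom}, $\Psi: H^{p}(\G,\E) \xrightarrow{\sim} H^{p+1}_{1\mbox{-}\hom}(\V)$ is an isomorphism of right $H^\bullet(\G)$-modules, intertwining $-\D_\G$ with $\delta$; and by Lemma~\ref{lemma:ev} together with the definition of $\D_\g$, the map $\mathrm{ev}: H^{p+1}_{1\mbox{-}\hom}(\v) \xrightarrow{\sim} H^{p}(\g,\E)$ is an isomorphism of right $H^\bullet(\g)$-modules intertwining $-d$ with $\D_\g$. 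Composing, $\VE_{rep}$ on $H^p(\G,\E)$ is conjugate (via these isomorphisms) to $\VE_{1\mbox{-}\hom}$ on $H^{p+1}_{1\mbox{-}\hom}(\V)$. Since conjugation by module isomorphisms preserves being a module homomorphism, $\VE_{rep}$ is a right module homomorphism over $\VE_\G$; and since conjugation preserves injectivity/surjectivity, $\VE_{rep}$ is an isomorphism precisely when $\VE_{1\mbox{-}\hom}$ is in degree $p+1$, i.e.\ when $p+1 \leq p_0$, equivalently $-1\leq p \leq p_0-1$ (the lower bound $p\geq -1$ being simply the range where $\Omega(\g,\E)^p$ and $C(\G,\E)^p$ are nonzero), and injective when $p+1 = p_0+1$, i.e.\ $p=p_0$. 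This is exactly the claimed statement.

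I do not expect a genuine obstacle here: all the analytic and computational content has been pushed into Theorem~\ref{main}, Lemma~\ref{lma:VBG_cohom}, and Lemma~\ref{lemma:ev}. The only points requiring a small amount of care are (i) bookkeeping the degree shift, so that the ``$p\leq p_0$'' range for $\V$-cochains becomes the ``$p\leq p_0-1$'' range for $\E$-cochains, and (ii) checking that the module structures match up under $\Psi$ and $\mathrm{ev}$ — but this is already asserted in Lemmas~\ref{lma:VBG_cohom} and~\ref{lemma:ev} and in the compatibility of $\Psi$ and $\mathrm{ev}$ with the cup/wedge products (which in turn follows from $\VE_\G$ being an algebra map, Theorem~\ref{vanest_thm}). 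So the proof is essentially a three-line diagram-chase invoking the preceding results, and I would write it as such.
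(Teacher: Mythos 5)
Your proposal is correct and follows exactly the paper's own argument: the paper's proof is precisely "a straightforward consequence of Theorem \ref{main}, Lemmas \ref{lma:VBG_cohom} and \ref{lemma:ev}," with the same remark about the degree shift coming from comparing $C(\G,\E)^k$ with $C^{\infty}_{1\mbox{-}\hom}(B_{k+1}\V)$. Your version just spells out the diagram-chase in slightly more detail.
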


\begin{proof}
This is a straightforward consequence of Theorem \ref{main}, Lemmas \ref{lma:VBG_cohom} and \ref{lemma:ev}. Notice the shift in grading for which one has isomorphisms. This arises because one has to apply Thm. \ref{main} to $C_{1\mbox{-}\hom}^{\infty}(B_{k+1}\V) \to CE_{1\mbox{-}\hom}^{k+1}(\v)$ in order to analyse $C(\G,\E)^k \to \Omega^k(\g, \E)$.
\end{proof}

The fact that the above cohomology groups are isomorphic was also proven in \cite{AS} using different techniques (in the more general setting of representations on arbitrarily graded vector bundles). Notice that, from our perspective, it just arises as a refinement of the usual Van Est map for $\V$ for $1$-homogeneous cochains.
% %
% 
% 
 \br \emph{(Formulas for $\VE_{rep}$)}
For $u \in \Gamma(\g)$, define $R_u: C^p(\G, \E) \to C^{p-1}(\G, \E)$ by:
$$
(R_u \muC)(g_1, \dots, g_{p}) = \left.\frac{d}{d\epsilon}\right|_{\epsilon=0} \Delta^{C}_{\phi_\epsilon^u(\tar(g_1))^{-1}} \muC(\phi_\epsilon^u(\tar(g_1)), g_1, \dots, g_p),
$$
where $\phi^u_\epsilon: M \to \G$ is the flow of the right-invariant vector field $\overrightarrow{u}$ and the definition $R_u\muE$ is analogous. Note that our conventions are different from \cite{AS}. One can now check the identities:  
\begin{align*}
R_{\chi_u} \Psi(\mu) & = \Psi(R_u \muC, 0), \,\, R_{\Upsilon_\eta}\Psi(\mu) = q^*\<\mu_E, \eta\>\\
R_{\chi_v}R_{\Upsilon_\eta}\Psi(\mu)  & = q^*\<R_v\mu_E, \eta\>, \, R_{\Upsilon_\eta} R_{\chi_v} \Psi(\mu)= 0,
\end{align*}
where $q: B_{\bullet}\V \to B_{\bullet} \G$ is the projection map.  
Using these identities, it is now straightforward to check that $VE_{rep}(\mu) = (\hat{\mu}_E, \hat{\mu}_{C}) \in \Gamma(\Lambda^p \g^*\otimes {E}) \oplus \Gamma(\Lambda^{p+1} \g^*\otimes {C})$ is given by
\begin{align*}
\hat{\mu}_E(u_1, \dots, u_p) & = (-1)^p\sum_{\sigma \in S_p} sgn(\sigma) R_{u_{\sigma(1)}} \dots R_{u_{\sigma(p)}} \mu_0,\\
\hat{\mu}_{C}(u_1, \dots, u_{p+1}) & =  \sum_{\sigma \in S_{p+1}} sgn(\sigma) R_{u_{\sigma(1)}} \dots R_{u_{\sigma(p+1)}} \muC.
\end{align*}
\er

%\br \emph{(Vanishing of deformation cohomology)}\label{rmk:vanishII}\marginpar{** CHECK} Recall from Remark \ref{rmk:vanishing1} that, when $\G$ is proper, then the cohomology of $C^\bullet_{VB}(\V)$ is trivial in degrees $p\geq 2$. Hence, in the split case, we get for the underlying representation cohomology that $H^k(\G,\E)=0$ for  $k\geq 1$ because of the isomorphism $\Psi$ (which shifts degree by one). In the case when $\G$ is also $2$-connected, Theorem \ref{thm:rep} implies that $H^1(\g,\E)=0$ at the algebroid representation level. These arguments can be applied, in particular, to the \emph{adjoint representations}, i.e. the representations up to homotopy arising by splitting the tangent VB-groupoid $T\G\rightrightarrows TM$ and VB-algebroid $T\g \to TM$ with the aid of a connection, see \cite{GM08, GM10}. In this case, we obtain a simple alternative proof of a conjecture of Crainic and Moerdijk on deformations of Lie brackets (Conjecture $1$ in \cite{CrMo}). (See also \cite{AS} where this conjecture is proven using alternative methods.)
%\er

\section{Differential forms with values in a representation}\label{sec:forms}
In this Section, we study differential forms on a Lie groupoid $\G$ with values in a representation $C \to M$. These objects were introduced in \cite{CSS} together with their infinitesimal counterparts, the Spencer operators. We here  provide a Van Est theorem for them as an application of our main result. The key point is the idea of seeing forms as homogeneous functions.

\subsection{The Van Est theorem for differential forms with coefficients}
We start this section by formally defining the ingredients entering the Van Est theorem for forms with coefficients (Theorem \ref{diff_vanest} below) without any reference to the VB-groupoids and algebroids. Later, we shall show how VB-groupoids and algebroids provide a useful framework to interpret many of the definitions and to give a proof of Theorem \ref{diff_vanest}.

Let $\G \toto M$ be a Lie groupoid, $C \to M$ be a representation of $\G$ and consider the map $\tar_p: B_p\G \to M$, $\tar_p(g_1, \dots, g_p) = \tar(g_1)$. When no confusion arises, we shall omit the reference to $p$ and simply denote $\tar_p$ by $\tar$. The space of $q$-differential forms on $\G$ with coefficients in $C$ is the complex $\Omega^q(B_\bullet\G, \tar^*C)$. It carries a differential $\delta: \Omega^q(B_{p-1}\G, \tar^*C) \to \Omega^{q}(B_{p}\G, \tar^*C)$ defined by
\begin{align*}
\delta \omega|_{(g_1,\dots, g_p)} & = \Delta_{g_1} \circ \partial_0^*\omega + \sum_{i=1}^{p} (-1)^{i}\partial_i^*\omega, \,\,\, \text{ for } p \geq 2 \\
\delta\omega|_g  & = \tar^*\omega -  \Delta_g \circ \sour^* \omega, \,\, \text{ for } p=1.
\end{align*}
It is straightforward to check that $\delta^2=0$. 

Note that, for $\omega \in \Omega^q(\G, \tar^*C)$,
\begin{align*}
 \,\,\delta\omega|_{(g_1,g_2)}  = \Delta_{g_1} \circ \pr_1^*\omega - m^*\omega + \pr_2^*\omega
\end{align*}
where $\pr_i(g_1,g_2) = g_i$, for $i=1,2$. In this case, a form $\omega \in \Omega^q(\G, \tar^*C)$ which satisfies $\delta \omega =0$ is called \textit{multiplicative} (see \cite{CSS}). Note that $\Omega^{q}(B_\bullet\G, \tar^*C)$ is a right dg-module for $C^\bullet(\G)$ with the module structure defined as usual by:
$$
(\omega \star f)|_{(g_1,\dots, g_{p+p'})} = \omega|_{(g_1,\dots, g_p)} f(g_{p+1},\dots, g_{p+p'}), \,\,\, \omega \in \Omega^q(B_p\G, \tar^*C),\,\, f \in C^{p'}(\G). 
$$

\begin{remark}
In the case of trivial coefficients (i.e. when $C$ is the trivial line bundle),  the de Rham differential turns $\Omega^q(B_p\G, t^*C) = \Omega^q(B_p\G)$ into a double complex known as the Bott-Shulman double complex associated to $\G$ (see \cite{AC} and references therein). In the remaining of this paper, we shall focus on the cohomology of $\delta$ alone and leave the investigation of compatible double complex structures (corresponding to 'multiplicative linear flat connections')  for future work.% the existence of an analogous additional differential $\mathrm{d}: \Omega^q(B_p\G, t^*C) \to \Omega^{q+1}(B_p\G, t^*C)$ turning $\Omega^{\bullet}(B_\bullet \G, t^*C)$ into a double complex and thus providing a version of a 'multiplicative linear flat connection' associated to $C$.
%\comment{More comments? Maybe a few words regarding what one should expect...flat connection with some compatibility with multiplication.} 
\end{remark}

Let $\g \to M$ be the Lie algebroid of $\G$. Similarly to \cite{AC}, we define the Weil complex $W^{p,q}(\g, C)$ to be the space of sequences $c=(c_0,c_1, \dots)$, where each
$$
c_k: \underbrace{\Gamma(\g) \times \dots \times \Gamma(\g)}_{(p-k)-times} \rightarrow \Omega^{q-k}(M, S^k\g^* \otimes C)
$$
is a $\R$-linear skew-symmetric map whose failure at being $C^{\infty}(M)$-linear is controlled by
\begin{equation}\label{leibniz}
c_k(f u_1, \dots, u_{p-k}|\cdot) = f c_k(u_1, \dots, u_{p-k}| \cdot) + df \wedge c_{k+1}(u_2,\dots, u_{p-k}|u_{1},\cdot), \,\, \forall \, f \in C^{\infty}(M).
\end{equation}
For each $q$, the complex $W^{\bullet, q}(\g, C)$ carries a differential $\mathrm{d}_W: W^{p,q}(\g, C) \to W^{p+1,q}(\g, C)$ which we now define. First, note that $\Omega^i(M, S^j\g^*\otimes C)$ is a module for the Lie algebra $\Gamma(\g)$. Indeed, for $\alpha \in \Omega^i(M), \, P \in \Gamma(S^j\g^*\otimes C)$,
$$
u \cdot (\alpha \otimes P)  = (\Lie_{\rho(u)}\alpha) \otimes P + \alpha \otimes (u \cdot P), \,\, u \in \Gamma(\g)
$$
defines an action of $\Gamma(\g)$ on $\Omega^i(M, S^j\g^*\otimes C)$, where
$$
(u \cdot P)(v_1, \dots, v_k) = \nabla_{u} P(v_1, \dots, v_k) - \sum_{i=1}^{k} P(v_1, \dots, [u, v_i], \dots, v_k),
$$
and $\nabla: \Gamma(\g)\times \Gamma(C) \to \Gamma(C)$ is the $\g$-connection giving the representation $C$. Now, $\mathrm{d}_W$ is defined by
\begin{align}
\label{def:d_W} \hspace{-30pt}\mathrm{d}_W(c)_k (u_1, \dots, u_{p-k+1}|v_1, \dots, v_k) & = (-1)^k\left(d_{CE}(c_k)(u_1, \dots, u_{p-k+1}| v_1, \dots, v_k)\vphantom{\sum_{j=1}^k}\right.\\
\nonumber & \hspace{-30pt} \left.- \sum_{j=1}^k\,i_{\rho(v_j)} c_{k-1} (u_1, \dots, u_{p-k+1}| v_1, \dots, \hat{v_j}, \dots, v_k)\right)
\end{align}
where $d_{CE}$ is Chevalley-Eilenberg differential on the complex $C^{\bullet}(\Gamma(\g), \Omega^{q-k}(M, S^k\g^*\otimes C))$.
\comment{The signs in the differential here changed because of the order convention.}
There is a right $\Gamma(\Lambda^\bullet \g^*)$-module structure on $W^{\bullet,q}(\g, C)$. It is defined, for $\beta \in \Gamma(\wedge^{p'} \g^*)$ and $c \in W^{p,q}(\g, C)$ by
$$
(c \wedge \beta)_k (u_1,\dots, u_{p+p'-k}|\cdot) = \!\!\!\!\sum_{\sigma\in S(p-k,p')} \!\!\!\!\!sgn(\sigma) c_k(u_{\sigma(1)}, \dots, u_{\sigma(p-k)})\,\beta(u_{\sigma(p-k+1)}, \dots, u_{\sigma(p+p'-k)}),
$$
where $S(p-k,p')$ is the space of $(p-k,p')$-unshuffles. 

\begin{proposition}\label{Weil:dg}
$W^{\bullet,q}(\g,C)$ is a right dg-module for $\Gamma(\Lambda^\bullet \g^*)$.
\end{proposition}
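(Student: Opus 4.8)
The plan is to verify the two defining properties of a right dg-module separately: first that $\mathrm{d}_W$ squares to zero, and second that it satisfies the graded Leibniz rule with respect to the $\Gamma(\Lambda^\bullet\g^*)$-action. Before either, I would record one preliminary point that the statement implicitly requires: that $\mathrm{d}_W$ actually lands in $W^{p+1,q}(\g,C)$, i.e. that $\mathrm{d}_W(c)$ again satisfies the Leibniz-type failure-of-$C^\infty(M)$-linearity condition \eqref{leibniz}. This is where the structure of formula \eqref{def:d_W} is designed to work: the Chevalley--Eilenberg term $d_{CE}(c_k)$ produces, upon inserting $fu_1$, both an $f\,d_{CE}(c_k)$ piece and correction terms involving $df$ and the action of $\Gamma(\g)$ on $\Omega^{q-k}(M,S^k\g^*\otimes C)$; the interior-product correction terms $i_{\rho(v_j)}c_{k-1}$ similarly produce $df$-contributions through \eqref{leibniz} applied to $c_{k-1}$. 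The sign $(-1)^k$ and the index bookkeeping are arranged precisely so these $df$-pieces reassemble into $df\wedge \mathrm{d}_W(c)_{k+1}$. I would check this by a direct but careful expansion, isolating the coefficient of $df\wedge(\cdot)$.

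For $\mathrm{d}_W^2=0$, the idea is to decompose $\mathrm{d}_W = D_{CE} + \Theta$, where $D_{CE}$ is the (signed) Chevalley--Eilenberg differential of the $\Gamma(\g)$-module $\Omega^{q-k}(M,S^k\g^*\otimes C)$ acting componentwise, and $\Theta(c)_k = -(-1)^k\sum_j i_{\rho(v_j)}c_{k-1}(\cdots|\cdots\hat{v_j}\cdots)$ is the ``contraction'' part lowering the $S^\bullet\g^*$-degree by one. Then $\mathrm{d}_W^2 = D_{CE}^2 + (D_{CE}\Theta + \Theta D_{CE}) + \Theta^2$. One has $D_{CE}^2=0$ since it is a genuine Lie-algebra-cohomology differential (here one uses that $\Omega^i(M,S^j\g^*\otimes C)$ is a $\Gamma(\g)$-module, as spelled out in the excerpt). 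One has $\Theta^2=0$ essentially for symmetry reasons: $\Theta^2$ involves $i_{\rho(v_i)}i_{\rho(v_j)}$ which is symmetric in $i,j$ while it is summed against a term skew in the role of the two contracted arguments, so it cancels in pairs — though one must be slightly careful because the $v$-arguments sit in a symmetric slot, not a skew one, so the cancellation really comes from the skew-symmetrization built into $d_{CE}$ combined with the sign $(-1)^k$. The cross-term $D_{CE}\Theta + \Theta D_{CE} = 0$ is the compatibility between the $\Gamma(\g)$-action and the contractions $i_{\rho(v)}$; concretely this reduces to the Cartan-type identity $\Lie_{\rho(u)}\,i_{\rho(v)} - i_{\rho(v)}\,\Lie_{\rho(u)} = i_{\rho([u,v])}$ on forms together with the fact that $\rho$ is a Lie algebra morphism. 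I would assemble these three vanishings with attention to the $(-1)^k$ signs.

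For the Leibniz rule $\mathrm{d}_W(c\wedge\beta) = \mathrm{d}_W(c)\wedge\beta + (-1)^p c\wedge d\beta$ (with $d$ the Chevalley--Eilenberg differential on $\Gamma(\Lambda^\bullet\g^*)$, i.e. $d = d_{CE}$ with trivial coefficients, matching $\CE^\bullet(\g)$ from the excerpt), I would again split into the $D_{CE}$-part and the $\Theta$-part. For the $D_{CE}$-part this is just the standard derivation property of Lie algebra cohomology differentials with respect to the cup/wedge product $W^{p,q}\times\Gamma(\Lambda^{p'}\g^*)\to W^{p+p',q}$, which is unshuffle-based and hence formally identical to the cup product appearing in \eqref{cup_prod}; the only subtlety is keeping track of the sign $(-1)^p$ coming from moving the differential past the degree-$p$ factor $c$. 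For the $\Theta$-part one notes that $\beta\in\Gamma(\Lambda^{p'}\g^*)$ has no $S^\bullet\g^*$-component (it lives in $k=0$), so $\Theta$ acts trivially on the $\beta$-slot and the contraction terms of $\mathrm{d}_W(c\wedge\beta)$ match those of $\mathrm{d}_W(c)\wedge\beta$ directly after redistributing the unshuffles.

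The main obstacle I anticipate is purely combinatorial rather than conceptual: managing the interplay between the skew-symmetrization in the $u_i$-arguments, the symmetrization in the $v_j$-arguments, the unshuffle sums defining the module structure, and the family of signs $(-1)^k$, $sgn(\sigma)$, $(-1)^p$ simultaneously — in particular verifying the cross-term vanishing $D_{CE}\Theta+\Theta D_{CE}=0$ and the $df$-coefficient matching for well-definedness, both of which require expanding two nested alternating sums and recognizing a telescoping/pairing cancellation. I would organize this by fixing the output component index $k$ and the output arguments, expanding both sides fully, and matching term by term according to which original component ($c_{k-1}$, $c_k$, or $c_{k+1}$) each term descends from.
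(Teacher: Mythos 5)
Your plan is a legitimate direct verification, but it is not the route the paper takes. The paper proves Proposition \ref{Weil:dg} as a corollary of Proposition \ref{prop:evaluation} (proved in the Appendix as Proposition \ref{prop:evaluation2}): one constructs the evaluation isomorphism $ev:\CE^{\bullet}_{\ext}(\bA)\to W^{\bullet,q}(\g,C)$ and checks that it intertwines the Chevalley--Eilenberg differential $d$ of the VB-algebroid $\bA$ with $\mathrm{d}_W$ and is a morphism of right $\Gamma(\Lambda^\bullet\g^*)$-modules. Since $\Gamma_{\ext}(\bM,\Lambda^\bullet\bA^*)$ is manifestly a dg-submodule of $\CE^\bullet(\bA)$ (it is closed under $d$ and under wedging with $0$-homogeneous forms), the statements $\mathrm{d}_W^2=0$, well-definedness, and the graded Leibniz rule are all inherited at once through $ev$; this also explains where the signs $(-1)^k$ and $(-1)^{k(k-1)/2}$ come from (they are forced by the ordering convention in $ev$), whereas your approach must simply verify them. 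What your approach buys is independence from the VB-algebroid machinery: $W^{\bullet,q}(\g,C)$ becomes a dg-module by fiat, with the three verifications you list. Your decomposition $\mathrm{d}_W=D_{CE}+\Theta$ is sound, and the mechanisms you invoke (the Cartan identity $[\Lie_{\rho(u)},i_{\rho(v)}]=i_{\rho([u,v])}$ together with the bracket terms in the $\Gamma(\g)$-action on $\Omega^{q-k}(M,S^k\g^*\otimes C)$ for the cross term; the Leibniz rule for $d_{CE}$ on cup products, with $(-1)^p=(-1)^k(-1)^{p-k}$, for the module compatibility) are the right ones.

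Two small corrections to your outline. First, the vanishing of $\Theta^2$ has nothing to do with the skew-symmetrization inside $d_{CE}$: it follows purely from the anticommutativity $i_{\rho(v_i)}i_{\rho(v_j)}=-i_{\rho(v_j)}i_{\rho(v_i)}$ summed against the double sum over ordered pairs $(i,j)$, $i\neq j$, whose remaining factor $c_{k-2}(\cdots|\cdots\hat{v_i}\cdots\hat{v_j}\cdots)$ is symmetric under exchanging $i$ and $j$. Second, besides checking that $\mathrm{d}_W(c)$ again satisfies \eqref{leibniz}, you must also check that the product $c\wedge\beta$ itself satisfies \eqref{leibniz} (i.e.\ that the unshuffle formula really lands in $W^{p+p',q}(\g,C)$); this is a short computation of the same flavor, but it is part of what ``right dg-module'' asserts and should not be omitted.
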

%\comment{New proposition!}

This result will follow from a evaluation isomorphism similar to \eqref{evmap} (see Proposition \ref{prop:evaluation} below) between $W^{\bullet, q}(\g,C)$ and another right dg-module for $\Gamma(\Lambda^\bullet \g^*)$. It is important to remark that all the signs appearing in the above formula for $\mathrm{d}_W$, as well as in formula \eqref{c_omega} below, are natural consequences of a simple ordering convention in the definition of this evaluation isomorphism.  
\comment{Remark about our sign conventions.}

\begin{remark}
\noindent 
\begin{itemize}
\item $W^{0,q}(\g, C)=\Omega^q(M,C)$. In this case, for $c \in W^{0,q}(\g,C)$, $\mathrm{d}_W(c)=(\mathrm{d}_W(c)_0,\mathrm{d}_W(c)_1)$ where $\mathrm{d}_W(c)_0: \Gamma(\g) \rightarrow \Omega^q(M,C)$, $\mathrm{d}_W(c)_1 \in \Omega^{q-1}(M, \g^*\otimes C)$ are given by
$$
\mathrm{d}_W(c)_0(u) = u \cdot c, \, \mathrm{d}_W(c)_1(v) = i_{\rho(v)} c.
$$
\item For $W^{1,q}(\g,C)$, its elements are $c=(c_0,c_1)$, where $c_0: \Gamma(g) \rightarrow \Omega^q(M, C)$, $c_1 \in \Omega^{q-1}(M, \g^* \otimes C) \cong Hom(\g, \Lambda^{q-1}T^*M\otimes C)$. In this case, 
\begin{align*}
\mathrm{d}_W(c)_0(u_1,u_2) & = u_1 \cdot c_0(u_2) - u_2 \cdot c_0(u_1) - c_0([u_1,u_2])\\
\mathrm{d}_W(c)_1(u|v) & = i_{\rho(v)}c_0(u) - u \cdot c_1(v) + c_1([u,v])\\
\mathrm{d}_W(c)_2(v_1, v_2) & = - i_{\rho(v_1)} c_1(v_2)  - i_{\rho(v_2)} c_1(v_2).
\end{align*}
\end{itemize}
Note that, in the case $p=1$, the equation $\mathrm{d}(c) = 0$ is equivalent to $(c_0,c_1)$ being a $C$-valued Spencer operator on $\g$ \cite{CSS} and, thus, in particular, to $(c_0,c_1)$ being an infinitesimally multiplicative form \cite{AC} when $C=\R$, with the trivial representation.
\end{remark}

\paragraph{The Van Est map.}
Given $u \in \Gamma(\g)$, let $\phi^u_\epsilon: \G \to \G$ be the flow of the right-invariant vector field $\overrightarrow{u}$. The flow of the corresponding vector field $B_pu \in \mathfrak{X}(B_p\G)$ is given by
$$
\psi^u_{\epsilon}(g_1, \dots, g_p) = (\phi^u_\epsilon(g_1), g_2, \dots, g_p).
$$
Define operators $R_u: \Omega^q(B_p\G, \tar^*C) \rightarrow \Omega^q(B_{p-1}\G, \tar^*C)$, $J_u: \Omega^q(B_p\G, \tar^*C) \rightarrow \Omega^{q-1}(B_{p-1}\G, \tar^*C)$ by
\begin{align}
R_u\omega|_{(g_1, \dots, g_{p-1})} & = s_0^*\left(\left.\frac{d}{d\epsilon}\right|_{\epsilon=0} \Delta_{\phi^u_{\epsilon}(\tar(g_1)))^{-1}} \circ \psi^{u \, *}_{\epsilon} \omega \right)\nonumber \\
J_u \omega   & = s_0^*\,i_{B_pu} \omega.\label{eq:RJ}
\end{align}

The Van Est map $VE_\Omega: \Omega^q(B_p\G, \tar^*C) \to W^{p,q}(\g, C)$,
$
VE_\Omega(\omega) = (c_0(\omega), c_1(\omega), \,\dots),
$
has each $c_k(\omega)$ given by
\begin{equation}\label{c_omega}
c_k(\omega)(u_1, \dots, u_{p-k}|v_1, \dots, v_k) = (-1)^{\frac{k(k-1)}{2}}\sum_{\sigma\in S(p)} sgn(\sigma) (-1)^{\epsilon(\sigma,k)} D_{\sigma(1)} \dots D_{\sigma(p)}\omega,
\end{equation}
where 
\begin{equation}\label{def:D}
D_{j} =
\begin{cases}
J_{v_{j}}, & \text{if }\,\, j \in \{1, \dots, k\},\\
R_{u_{j-k}}, & \text{if } \,\,j \in \{k+1, \dots, p\}
\end{cases}
\end{equation}
and 
\begin{equation*}
\epsilon(\sigma,k) = \# \{(i,j) \in \{1, \dots, k\}\times \{1, \dots, k\} \,\, |  \,\, i < j \text{ and } \sigma^{-1}(i) > \sigma^{-1}(j)\}.
\end{equation*}

\begin{theorem}\label{diff_vanest}
$VE_\Omega$ is a chain map and its induced map on cohomology $VE_\Omega: H^{\bullet}(\Omega^{q}(B_\bullet\G, \tar^*C)) \rightarrow H^\bullet(W^{\bullet,q}(\g,C))$ is a right module homomorphism over $VE_\G: H^\bullet(\G) \to H^\bullet(\g)$. Moreover, if $\G$ is source $p_0$-connected, then
$$
VE_\Omega: H^{p}(\Omega^{q}(B_\bullet\G, \tar^*C)) \rightarrow H^p(W^{\bullet,q}(\g,C))
$$
is an isomorphism for $p \leq p_0$ and it is injective for $p=p_0+1$, for each fixed $q$.
\end{theorem}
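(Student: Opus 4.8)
The plan is to realize Theorem \ref{diff_vanest} as an instance of the main result, Theorem \ref{main}, applied to a carefully chosen VB-groupoid whose $k$-homogeneous cochains model differential $q$-forms with coefficients, exactly as suggested in the introduction (``forms in $\Lambda^k V^*$ are $k$-homogeneous functions on $V^k$''). Concretely, starting from the representation $C \to M$ of $\G$, one builds a VB-groupoid $\V \toto E$ whose fiber incorporates both the tangent-like directions (to produce forms on $B_p\G$) and the $C^*$-directions (to produce $C$-valued coefficients), in such a way that $\Omega^q(B_p\G, \tar^*C)$ is identified with a space of $(q)$-homogeneous, or more precisely suitably-graded-homogeneous, cochains on $B_\bullet\V$. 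A natural candidate is a VB-groupoid built from $T\G \oplus \tar^*C^*$ (or an appropriate bundle over $TM \oplus C^*$), relying on the fact, recalled in the excerpt, that both $T\G\rightrightarrows TM$ and $\tar^*C^*\rightrightarrows C^*$ are VB-groupoids, and that fiberwise $k$-homogeneous functions on a vector bundle $V$ are sections of $S^kV^*$.

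The key steps, in order, would be: (1) Construct the relevant VB-groupoid $\V \toto E$ over $\G$ and its Lie algebroid $\v \to E$, and identify the homogeneous-cochain complexes. Here one must keep track of a bidegree — homogeneity degree $q$ in the ``form'' directions and degree $1$ in the $C^*$-direction — so strictly one works with the part of $C^\bullet_{k\mbox{-}\hom}(\V)$ that is additionally linear in the coefficient slot (mirroring the passage from $C^\bullet_{1\mbox{-}\hom}$ to $C^\bullet_{VB}$ in Section \ref{sec:ruth}). (2) Prove a cochain isomorphism $\Omega^q(B_\bullet\G, \tar^*C) \cong$ (the appropriate homogeneous subcomplex of $C^\bullet(\V)$), intertwining $\delta$ on forms with the VB-groupoid differential; this is the analogue of Theorem 5.6 of \cite{GM10} used in Lemma \ref{lma:VBG_cohom}, and will likely require an intermediate ``VB-type'' normalization lemma in the spirit of Lemma \ref{lma:vbgc0} to discard the extra degeneracy-type conditions without changing cohomology. (3) Prove the infinitesimal counterpart: an evaluation isomorphism $\CE^\bullet_{k\mbox{-}\hom}(\v) \cong W^{\bullet,q}(\g,C)$ (this is the ``Proposition \ref{prop:evaluation}'' alluded to in the text), which simultaneously yields Proposition \ref{Weil:dg}. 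The Leibniz-type failure of $C^\infty(M)$-linearity in \eqref{leibniz} and the component structure $(c_0,c_1,\dots)$ should fall out of decomposing a homogeneous form on $\v$ along the splitting of $\v$ into base $\g$-directions and fiber $E^*$/$C^*$-directions. (4) Identify the Van Est map: show that $VE_\Omega$ equals $ev \circ \VE_{k\mbox{-}\hom} \circ \Psi$ under the two isomorphisms above, by computing $\VE_{k\mbox{-}\hom}$ on the relevant generators and matching it with \eqref{c_omega}; the operators $R_u, J_u$ of \eqref{eq:RJ} should correspond to the operators $R_\chi$ of \eqref{vanest} for the linear sections $\chi$ of $\v$ covering $u$ (one ``$R$'' per base direction, one ``$J$'' per fiber/contraction direction), and the sign factors $(-1)^{\epsilon(\sigma,k)}$, $(-1)^{k(k-1)/2}$ should emerge from the chosen ordering convention in the evaluation isomorphism. (5) Conclude: apply Theorem \ref{main} with the relevant $k$, transport the isomorphism/injectivity ranges through steps (2)--(4), and note that the module structures over $C^\bullet(\G)$, $\Gamma(\Lambda^\bullet\g^*)$ match up, giving the stated module-homomorphism property over $VE_\G$.

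The main obstacle, I expect, is step (3) together with the bookkeeping tying it to step (4): pinning down the precise VB-algebroid $\v$ and the evaluation isomorphism so that the somewhat intricate Weil complex $W^{p,q}(\g,C)$ — with its sequences $(c_k)$, its mixed wedge/symmetric structure $S^k\g^*\otimes C$, the twisted Leibniz rule \eqref{leibniz}, and the differential \eqref{def:d_W} involving both $d_{CE}$ and contractions $i_{\rho(v_j)}$ — is reproduced exactly, including all signs. Getting the differential $\mathrm d_W$ to match the Chevalley-Eilenberg differential of $\v$ pushed through $ev$ is where the real computational weight lies; the contraction terms $i_{\rho(v_j)}c_{k-1}$ are the signature of the anchor of $\v$ acting in the $C^*$-directions, and verifying this, along with the compatibility with the $\Gamma(\Lambda^\bullet\g^*)$-module structures, is delicate. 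This is presumably why the authors relegate the bulk of it to the Appendix; in the main text one would state the VB-groupoid model, the two evaluation isomorphisms as propositions, and then deduce Theorem \ref{diff_vanest} formally from Theorem \ref{main} in a few lines, exactly parallel to the proof of Theorem \ref{thm:rep}.
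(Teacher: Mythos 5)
Your overall architecture is the paper's: the VB-groupoid is the $q$-fold fiber product $\bG_q=\times_\G^q T\G\times_\G \tar^*C^*\rightrightarrows \times_M^q TM\times_M C^*$ (the Whitney sum you gesture at), the map $\frakf$ identifies $\Omega^q(B_p\G,\tar^*C)$ with the $(q{+}1)$-homogeneous, skew-symmetric, ``simple'' functions $C^\infty_\ext(B_p\bG)$, the evaluation isomorphism $ev:\CE^\bullet_\ext(\bA)\to W^{\bullet,q}(\g,C)$ is exactly your step (3) and is indeed the computational core relegated to the Appendix, and Lemma \ref{tech_lemma} is your step (4), with $R_u,J_u$ realized by the Van Est operators $R_\chi$ for the tangent-lift sections $\T u$ and core-type sections $\Z_i v$ of $\bA$, the signs coming from the ordering convention, precisely as you predict.

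The one place your plan deviates, and where it would go wrong if taken literally, is the mechanism for passing to the subcomplex. You propose a normalization lemma in the spirit of Lemma \ref{lma:vbgc0}, i.e.\ showing that the inclusion of the ``extra-conditions'' subcomplex into the full homogeneous complex does not change cohomology. That cannot work here: $C^\infty_\ext(B_\bullet\bG)$ is a \emph{proper direct summand} of $C^\infty_{(q+1)\mbox{-}\hom}(B_\bullet\bG)$ (the other summands correspond to the non-alternating and non-simple symmetry types of $S^{q+1}$ of the fiber), so the inclusion is not a quasi-isomorphism and applying Theorem \ref{main} with $k=q+1$ alone does not suffice. The paper's actual mechanism is that the permutation maps $\sigma_\G$, the zero-insertion maps $0_i^\G$, and the scalings $h^\G_\l$ are all groupoid morphisms whose Lie functors are the corresponding maps on $\bA$; hence by Lemma \ref{commut_vannest} the composite projection $P_\ext=P_{\sk}\circ P_{\spl}\circ P_{(q+1)\mbox{-}\hom}$ commutes with $\delta$, with $d$, and with $\VE$, and one invokes the Homological Lemma directly with $P_{\ext,\G}$, $P_{\ext,\g}$ (not just the homogeneity projection). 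This is cleaner than the Lemma \ref{lma:vbgc0} route and is in fact forced, since no analogue of that lemma holds for these extra conditions; the rest of your outline, including the module-structure bookkeeping and the shift-free degree count, goes through as stated.
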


%\paragraph{Some remarks concerning the trivial coefficients case.}
%When $C$ is the trivial line bundle, Theorem \ref{diff_vanest} recovers Theorem 5.1 of reference \cite{AC} up to some sign conventions. In this case, the space $\Omega^\bullet(B_\bullet\G)$ is a bigraded algebra with the cup product \cite{dupont}
%$$
%\omega \cup \omega ' = \pr_1 \omega \wedge \pr_2 \omega ', \,\,\, \omega \in \Omega^q(B_p\G), \, \omega ' \in \Omega^{q'}(B_{p'}\G),
%$$  
%where $\pr_1: B_{p+p'}\G \to B_p\G$ (resp. $\pr_2:B_{p+p'}\G \to B_{p'}\G$) is the map $(g_1, \dots, g_{p+p'}) \mapsto (g_1, \dots g_p)$ (resp. $(g_1, \dots, g_{p+p'}) \mapsto (g_{p+1}, \dots g_{p+p'})$) 
%and $W^{\bullet,\bullet}(\g)$ is an bigraded algebra with the product defined in \cite{AC}.

In the remainder of the paper, we will prove Theorem \ref{diff_vanest} by showing how it can be framed as a Van Est result for a class of VB-groupoids. Notice that the above theorem recovers Theorem 5.1 of reference \cite{AC} (up to some sign conventions) when $C=M\times \mathbb{R}$ with the trivial representation. It is interesting to note that, even in this particular case, our proof is independent of the one given in \cite{AC}.

\subsection{Forms as functions}\label{ex:tensor}
The key idea in the proof of \ref{diff_vanest} is that differential forms can be seen as homogeneous functions on a appropriate space. In this subsection, we shall ellaborate on this classical viewpoint.

%The space of sections of tensor products and the exterior algebra of vector bundles can be seen as subspaces of homogeneous functions. 
Let $V_1, \dots, V_{q+1}$ be vector bundles over $B$ and consider the fiber-product $\prod_{j=1}^{q+1} V_j = V_1\times_B \dots \times_B V_{q+1}$ with the natural vector bundle structure over $B$ (the Whitney sum $V_1 \oplus \dots \oplus V_{q+1} \to B$). 

\paragraph{Simple functions.}
For $i=1, \dots q+1$, let $0_i: \prod_{j \neq i} V_j \to \prod_{j} V_j$ be the inclusion which puts a zero in the $i$-th coordinate. A function $f \in \C(\prod_{j} V_j)$ is said to be \textit{simple} if 
$$
0_i^*f=0, \,\forall \, i=1,\dots, q+1.
$$

For a subset $I \subset \{1, \dots, q+1\}$, denote by $|I|$ its cardinality and by $0_I: \prod_{j \notin I} V_j \to \prod_{j} V_j$ the inclusion which puts a zero in the entries indicated by the elements of $I$. Define $P_{(l)}: \C(\prod_j V_j) \to \C(\prod_jV_j)$, $l=-1,0,1, \dots, q$, by
\begin{equation}\label{proj:simple}
\begin{array}{rl}
P_{(-1)}(f) &  \hspace{-5pt}=  f\\
P_{(l)}(f)  &  \hspace{-5pt}=  \displaystyle P_{(l-1)}(f) - \sum_{|I|=q+1-l} 0_I^*P_{(l-1)}(f),\,\, \text{for} \,\,l=0, \dots, q.
\end{array}
\end{equation}
Each $P_{(l)}$, $l=0,\dots, q$, is a projection onto the space of functions of $\prod_j V_j$ which vanishes whenever $(q+1-l)$ entries are zero. In particular, $P_{\mathrm{spl}}: = P_{(q)}$ is a  projection onto the space of simple functions.

\paragraph{Multilinearity and skew-symmetry.}
The map
$$
\begin{array}{ccl}
\Gamma(B, V_1^*\otimes \dots \otimes V_{q+1}^*) & \longrightarrow & C^{\infty}(\prod_{j=1}^{q+1} V_j)\\
\mu_1 \otimes \dots \otimes \mu_{q+1} & \longmapsto &  (\ell_{\mu_1}\circ \pr_1) \cdots(\ell_{\mu_{q+1}} \circ \pr_{q+1}), \\ 
\end{array}
$$
is a monomorphism of $C^{\infty}(B)$-modules, where $\pr_i: \prod_{j=1}^{q+1} V_j \mapsto V_i$ is the projection onto the $i$-th summand. It follows from Taylor's Theorem that its image is the space of simple $q+1$-homogeneous functions.

We shall be mainly interested in the case $V_1=\dots=V_q=V$ and $V_{q+1}= W^*$ and we shall denote the $q$-fold fiber product $V\times_B \dots \times_B V$ by $\times_B^q V$. A function $f \in \C(\times_B^q V  \times_B W^*)$ is said to be \textit{skew-symmetric} if
$$
f(v_{\sigma(1)}, \dots, v_{\sigma(q)}, \xi) = sgn(\sigma) f(v_1, \dots, v_q, \xi), \forall \, v_i \in  V, \, \xi \in W^*, \, \sigma \in S_q.
$$

The map $P_{\sk}: \C(\times_B^q V \times_B W^*) \to \C(\times_B^q V \times_B W^*)$ defined by
\begin{equation}\label{proj:skew}
P_\sk(f) = \frac{1}{q!} \sum_{\sigma \in S_q} sgn(\sigma) f \circ \sigma,
\end{equation}
is a projection onto the space of skew-symmetric functions, where $S_q$ is the symmetric group and $\sigma: \times_B^q V \times_B W^* \to \times_B^q V \times_B W^*$ is the permutation of the first $q$ entries belonging to $V$ according to $\sigma$. Let us define

\begin{equation}\label{diff_iso}
\begin{array}{rcl}
\mathfrak{F}:\hspace{20pt}\Gamma(B, \Lambda^q V^* \otimes W) & \longrightarrow & C^\infty(\times_B^q V\times W^*) \vspace{5pt}\\
 \omega = (\mu_1 \wedge \dots \wedge \mu_q) \otimes \xi & \longmapsto & q! \,P_{\sk}\left(\,(\ell_{\mu_1}\circ \pr_1) \cdots (\ell_{\mu_q}\circ \pr_q) \, (\ell_{\xi}\circ \pr_{q+1})\right)\\
  & & 
\end{array}
\end{equation}
It is straightforward to check that $\f$ is a monomorphism of $C^\infty(B)$-modules whose image is the space of simple, skew-symmetric $q+1$-homogeneous functions. We shall denote the image of $\f$ by $C^\infty_\ext(\times_B^q V\times W^*)$. The projections $P_{\sk}, P_\spl$ and $P_{q+1\mbox{-}\hom}$ commute with each other, and so
\begin{equation}\label{proj:ext}
P_{\ext}:= P_\sk \circ P_\spl \circ P_{q+1\mbox{-}\hom} : \C(\times_B^q V \times_B W^*) \to C^\infty_{\ext}(\times_B^q V \times_B W^*) 
\end{equation}
is a projection onto $C^{\infty}_{\ext}(\times_B^q V \times_B W^*)$.

\begin{example}
For $V= B \times \R^n$, let $\{\theta_1, \dots, \theta_m\}$ be a local frame for $W$ and $\{e^1, \dots, e^n\}$ be a global frame for $V^*$. A point $p \in \times_B^q V\times_B W^*$, $q \leq n$,  has coordinates 
$$
p=(x,\underline{y_1}, \dots, \underline{y_q}, \xi_1, \dots, \xi_m), \,\, x \in B, \, \underline{y_j} = (y_{1,j},\dots, y_{n,j}) \in \R^n,  \xi_l \in \R.
$$
For a function $f \in C^\infty(\times_B^q V \times_B W^*)$, $P_{\ext}f = \frac{1}{q!}\f(\omega_f)$,  where $\omega_f \in \Gamma(B,\wedge^q V^* \otimes W)$ is given by
$$
\omega_f(p) =  \sum_{1 \leq k_1 < \dots < k_q \leq n} \sum_{i=1}^m \sum_{\sigma \in S_q} sgn(\sigma) \frac{\partial^{q+1}f}{\partial y_{k_{\sigma(1)}, 1}\dots {\partial y_{k_{\sigma(q)}, q}} \,\partial \xi_i}(x,0) \, e^{k_1} \wedge \dots \wedge e^{k_q} \otimes \theta_i(x)
$$
\end{example}

\subsection{The VB-groupoid behind the curtains.}
We here define the VB-groupoid whose differentiable cochain complex contain the complex of differential forms with coefficients. Later on, we show how the Weil complex is embedded in the Chevalley-Eilenberg complex of its Lie algebroid.  

\paragraph{Differential forms with coefficients.}
Let $T\G \toto TM$ be the tangent groupoid, obtained by taking the derivative of all the structure maps defining $\G$.  Let us introduce the \vbg $\bG_q \toto \bM_q$ defined by
\begin{equation}\label{bG}
\begin{xy}
(-5,17)*+{\bG_q = \underbrace{T\G \times_\G \dots \times_\G T\G}_{q-times}\times_\G \, \tar^*C^*} ="A"; (40,20)*+{\G} = "B";
(-5,-3)*+{\bM_q = \underbrace{TM \times_M \dots \times_M TM}_{q-times}\times_M C^*} = "C"; (40,0)*+{M} = "D";
{\ar@{->} (22,20)*{}; "B"}
{\ar@{->} (24,0)*{}; "D"}
{\ar@{->} (-5,11)*{}; (-5,3)*{}}
{\ar@{->} (-3,11)*{}; (-3,3)*{}}
{\ar@{->} (37,17)*{}; (37,3)*{}}
{\ar@{->} (34,17)*{}; (34,3)*{}}
\end{xy}
\end{equation}
where the structure maps are defined\footnote{There is a more general fact playing a role here: Whitney sums of VB-groupoids yield VB-groupoids (see \cite{BC}).} componentwise and $\tar^*C^* \toto C^*$ is the action groupoid corresponding to the right action of $\G$ (see Example \ref{ex:repG}) on $C^*$ obtained by taking adjoints. We shall frequently omit the subscript $q$ when no confusion arises. The $q$-fold fiber products on \eqref{bG} shall also be denoted as $\times^q_\G T\G \times_{\G} \tar^*C^*$ and $\times^q_M TM \times_M C^*$.

\begin{lemma}The space of $p$-composable arrows $B_p\bG$ is isomorphic as a vector bundle over $B_p\G$ to the $q$-fold fiber product $T B_p\G \times_{B_p\G} \dots \times_{B_p\G} TB_p\G \times_{B_p\G} \tar^*C^*$. More concisely,
\begin{equation}\label{re_arrag}
B_p\bG = B_p(\times_{\G}^q T\G \times_{\G} \tar^*C^*) \cong (\times^q_{B_p\G} T B_p\G) \times_{B_p\G} \tar^*C^*.
\end{equation}
\end{lemma}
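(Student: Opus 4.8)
The plan is to identify the composable $p$-tuples in $\bG_q = \times_\G^q T\G \times_\G \tar^*C^*$ with the claimed fiber product, and the key observation is that everything is built componentwise, so we can treat each factor separately and then reassemble. First I would recall that for a fiber product of Lie groupoids (over a common base groupoid $\G$), the nerve commutes with fiber products, i.e. $B_p(\H_1 \times_\G \H_2) \cong B_p\H_1 \times_{B_p\G} B_p\H_2$, because a $p$-tuple of composable arrows in the fiber product is exactly a compatible pair of $p$-tuples of composable arrows projecting to the same $p$-tuple in $\G$. Applying this $(q+1)$ times reduces the problem to computing $B_p(T\G)$ and $B_p(\tar^*C^*)$ individually.

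For the tangent factor, the point is that $B_p(T\G) \cong T(B_p\G)$ as vector bundles over $B_p\G$. This follows because the nerve functor is built from the structure maps of $\G$ via pullbacks/fiber products, and the tangent functor $T$ commutes with transversal fiber products and with the relevant submersions; concretely, a composable $p$-tuple of arrows in $T\G \toto TM$ is a $p$-tuple $(X_1,\dots,X_p)$ with $X_i \in T_{g_i}\G$ and $Ts(X_i) = Tt(X_{i+1})$, which is precisely a tangent vector to $B_p\G \subset \G^p$ at $(g_1,\dots,g_p)$ (using that $B_p\G$ is cut out of $\G^p$ by the transversal equations $s(g_i) = t(g_{i+1})$, so $T(B_p\G)$ is cut out by the corresponding linearized equations). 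For the coefficient factor, Example \ref{ex:repG} already records that $B_p(\tar^* C^*) = \tar_p^* C^*$ as vector bundles over $B_p\G$.

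Combining these, $B_p\bG_q \cong (\times_{B_p\G}^q T(B_p\G)) \times_{B_p\G} \tar_p^* C^*$, which is exactly \eqref{re_arrag}. Finally I would check that this identification is one of vector bundles over $B_p\G$, not merely of manifolds: by Lemma \ref{p_vector}, $B_p\bG_q$ is a vector bundle over $B_p\G$ whose homogeneous structure is $B_p h_\l^\G = h_\l^{B_p\G}$, and this is manifestly componentwise scalar multiplication on $T(B_p\G)^{\oplus q} \oplus \tar_p^*C^*$, so the bijection intertwines the $\R_+$-actions and is therefore a vector bundle isomorphism by the characterization in \cite{GR}.

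The main obstacle I expect is the bookkeeping in showing $B_p(T\G) \cong T(B_p\G)$ cleanly — one must argue that the tangent functor commutes with the iterated fiber products defining $B_p\G$ and that the various maps ($s$, $t$, $m$) are submersions so transversality holds; this is routine but needs care with the base-point/transversality conditions. Once that is in hand, the commutation of the nerve with fiber products over a fixed base groupoid and the invocation of Example \ref{ex:repG} make the rest immediate.
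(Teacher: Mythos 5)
Your proposal is correct and amounts to essentially the same argument as the paper: the paper's proof just writes down the explicit rearrangement map $(\underline{U}^{(1)},\dots,\underline{U}^{(p)}) \mapsto ((U_1^{(1)},\dots,U_1^{(p)}),\dots,(g_1,\dots,g_p,\xi_1))$, which is exactly the composite of your componentwise identifications ($B_p$ commuting with fiber products over $\G$, $B_p(T\G)\cong T(B_p\G)$, and $B_p(\tar^*C^*)\cong\tar_p^*C^*$). Your extra care with transversality and the $\R_+$-action is sound but the paper leaves it implicit.
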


%\begin{proof}
The proof consists in simply defining the isomorphism:
$$
B_p\bG \ni (\underline{U}^{(1)}, \dots, \underline{U}^{(p)}) \mapsto ((U_1^{(1)}, \dots, U_1^{(p)}), \dots, (U_q^{(1)}, \dots, U_q^{(p)}), (g_1, \dots, g_p, \xi_1)),
$$
 where each $\underline{U}^{(i)}=(U_1^{(i)}, \dots, U_q^{(i)}, (g_i,\xi_i)) \in \bG$.
%\end{proof}

One important consequence of the isomorphism  \eqref{re_arrag} is that the space of differential forms $\Omega^q(B_p\G, \tar^*C)$ can be identified with a subspace of $C^{\infty}(B_p\bG)$, which we shall denote by $C^{\infty}_\ext(B_p\bG)$.
It is the image of the map \ref{diff_iso}:
\begin{equation}
\mathfrak{F}: \Omega^q(B_p\G, \tar^*C) \to C^{\infty}((\times^q_{B_p\G} T B_p\G) \times_{B_p\G} \tar^*C^*) \cong C^\infty(B_p\bG).
\end{equation}

In order to characterize $C^{\infty}_\ext(B_p\bG)$ more explicitly, note that, given a permutation $\sigma \in S_q$, the permutation map $\sigma_{\G}: \times^q_{\G} T\G \times_{\G} \tar^*C \to \times^q_{\G} T\G \times_{\G} \tar^*C$ is a groupoid morphism and, under the isomorphism \eqref{re_arrag},
\begin{equation}\label{perm_relation}
B_p \sigma_\G \cong \sigma_{B_p\G},
\end{equation}
for the corresponding permutation map $\sigma_{B_p\G}: \times^q_{B_p\G} TB_p\G \times_{B_p\G} \tar^*C \to \times^q_{B_p\G} TB_p\G \times_{\G} \tar^*C$. Similarly, the zero maps $0^{\G}_i: \bG_{q-1} \to \bG_q$, $i=1, \dots, q$ and $0^\G_{q+1}: \times_\G^q T\G \to \bG_q$ are groupoid morphisms and 
\begin{equation}\label{zero_relation}
B_p 0_i^{\G}  \cong 0^{B_p\G}_i, \,\, \forall \, i=1, \dots, q+1.\\
\end{equation}
Hence, 
$$
C^{\infty}_\ext(B_p\bG) =  \{f \in C^\infty_{(q+1)\mbox{-}\hom}(B_p\bG) \,\, | \,\, (B_p\sigma_\G)^*f = sgn(\sigma) f,\,\, (B_p0_i^\G)^*f = 0, \, \forall \sigma \in S_q, \, i=1, \dots, q+1\}.
$$
%
%It is now straightforward to check that under the multiplication of functions, $C^{\infty}_\ext(B_\bullet\bG)$ is a right $C^\bullet(\G) \cong C^\bullet_{0\mbox{-}\hom}(\bG)$ module.
%i.e.
%$$
%\mathfrak{F}_{\omega \star f} = \mathfrak{F}_{\omega} \,\,\pi_{B_{p'}\bG}^*f, \,\, f \in C^{p'}(\G), 
%$$
%where $\pi_{B_{p'}\bG}: B_{p'}\bG \to B_{p'}\G$ is the projection.
Note that the projection \eqref{proj:ext} gives here, under the isomorphism \eqref{re_arrag}, a projection $P_{\ext,\G}: C^{\infty}(B_p\bG) \to C^{\infty}_\ext(B_p\bG)$. 

\begin{proposition}
The projection $P_{\ext, \G}$ satisfies
$$
P_{\ext,\G} \circ \delta = \delta \circ P_{\ext, \G}
$$
In particular, $C^{\infty}_\ext(B_\bullet\bG)$ is a subcomplex.
\end{proposition}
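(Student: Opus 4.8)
The plan is to exhibit $P_{\ext,\G}$ as a composition of three pairwise-commuting projections, each of which separately commutes with the nerve differential $\delta=\sum_{i=0}^{p}(-1)^i\partial_i^*$ of $\bG_q$, and then compose. Indeed, under the identification \eqref{re_arrag} the projection \eqref{proj:ext} reads
$$
P_{\ext,\G}\;=\;P_{\sk,\G}\circ P_{\spl,\G}\circ P^{\G,p}_{(q+1)\mbox{-}\hom},
$$
where $P^{\G,p}_{(q+1)\mbox{-}\hom}$ is the homogeneity projection of the vector bundle $B_p\bG\to B_p\G$, and $P_{\sk,\G}$, $P_{\spl,\G}$ are the $\bG$-counterparts of \eqref{proj:skew} and of $P_{\spl}=P_{(q)}$ from \eqref{proj:simple}. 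Since these three factors commute with one another, it suffices to prove that each commutes with $\delta$.

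For the homogeneity factor this is immediate: $\bG_q\toto\bM_q$ is a VB-groupoid over $\G\toto M$ (being the Whitney sum of the VB-groupoids $\times_\G^qT\G$ and $\tar^*C^*$, cf. the footnote above), so the earlier Proposition on homogeneous groupoid cochains gives $P^{\G,p+1}_{(q+1)\mbox{-}\hom}\circ\delta=\delta\circ P^{\G,p}_{(q+1)\mbox{-}\hom}$ verbatim. For the skew-symmetrization factor, recall from \eqref{perm_relation} that $B_p\sigma_\G\cong\sigma_{B_p\G}$ and that each permutation map $\sigma_\G$, $\sigma\in S_q$, is a Lie groupoid morphism of $\bG_q$; by functoriality of the nerve the maps $B_\bullet\sigma_\G$ intertwine the face maps $\partial_i$, hence the pullbacks $(\sigma_{B_p\G})^*$ commute with $\delta$, and averaging shows that $P_{\sk,\G}=\frac{1}{q!}\sum_{\sigma\in S_q}sgn(\sigma)\,(\sigma_{B_p\G})^*$ does too.

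For the simple-functions factor I would argue by induction along the recursion \eqref{proj:simple}. Each operator occurring there, $f\mapsto 0_I^*\big(P_{(l-1),\G}f\big)$ (reinstating the implicit pullback along the componentwise projection $\pr\colon\bG_q\to\bG_{q-|I|}$ that forgets the entries indexed by $I$), is the composition $(B_p\pr)^*\circ(B_p0_I^\G)^*$ of pullbacks along groupoid morphisms: the inclusions $0_I^\G\colon\bG_{q-|I|}\to\bG_q$ (see \eqref{zero_relation}, noting each $0_I^\G$ is a composite of the morphisms $0_i^\G$) and the projection $\pr$. By naturality of the nerve such a composition commutes with $\delta$, so if $P_{(l-1),\G}$ commutes with $\delta$ then so does $P_{(l),\G}$; in particular $P_{\spl,\G}=P_{(q),\G}$ commutes with $\delta$. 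Combining the three steps yields $P_{\ext,\G}\circ\delta=\delta\circ P_{\ext,\G}$, whence $C^\infty_\ext(B_\bullet\bG)=P_{\ext,\G}\big(\C(B_\bullet\bG)\big)$ is a subcomplex of $(\C(B_\bullet\bG),\delta)$.

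The only step requiring a little care is the last one: one must consistently read the operators in \eqref{proj:simple} as pullbacks along honest (groupoid) self-maps of $\bG_q$ — zeroing-out followed by the appropriate re-inclusion — rather than as maps relating the different $\bG$'s, so that the naturality of the nerve can be invoked uniformly. Beyond this bookkeeping there is no genuine obstacle; in particular the twist by $\Delta_{g_1}$ in the differential for $C$-valued forms is already built into the action groupoid structure of $\tar^*C^*\toto C^*$ and so needs no separate treatment.
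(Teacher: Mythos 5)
Your proof is correct and follows essentially the same route as the paper's: the paper's (much terser) argument likewise reduces everything to the naturality relation $(B_{p+1}\phi)^*\delta f=\delta\,(B_p\phi)^*f$ for groupoid morphisms $\phi$, applied via \eqref{perm_relation}, \eqref{zero_relation} and the homogeneity structure. Your only addition is to spell out the bookkeeping the paper leaves implicit --- in particular that the operators in the recursion \eqref{proj:simple} must be read as pullbacks along groupoid \emph{self-maps} of $\bG_q$ (zeroing-out composed with re-inclusion) --- which is a correct and worthwhile clarification but not a different argument.
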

\comment{Commutation of the projection and differential.}
\begin{proof}
The result follows directly from \eqref{perm_relation}, \eqref{zero_relation} and the fact that
$$
(B_{p+1}\phi)^*\delta f = \delta \, (B_{p}\phi)^*f,
$$
for an arbitrary groupoid morphism $\phi: \H_1 \to \H_2$ and $f \in C^p(\H_2)$.
\end{proof}
%The next Proposition stablishes that $C^\bullet_\ext(\bG)$ is a subcomplex of the cochain complex of $\bG$.

In the following, we shall denote by $C^\bullet_\ext(\bG)$ and $H^\bullet_\ext(\bG)$ the complex $(C^{\infty}_\ext(B_{\bullet}\bG), \delta)$ and its cohomology, respectively.
\begin{proposition}\label{commutation}
The map $\frakf: \Omega^q(B_p\G, \tar^*C) \to C^{p}_\ext(\bG)$ is a dg-module isomorphism.
\end{proposition}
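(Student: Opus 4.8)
The plan is to check that $\frakf$ is a bijection onto $C^\bullet_\ext(\bG)$ and that it intertwines the differentials. The first part is essentially already done: by construction of the map $\f$ in \eqref{diff_iso}, together with the identification \eqref{re_arrag} of $B_p\bG$ with $(\times^q_{B_p\G} TB_p\G)\times_{B_p\G}\tar^*C^*$, the image of $\frakf$ on each level $p$ is exactly $C^\infty_\ext(B_p\bG)$, the space of $(q{+}1)$-homogeneous, skew-symmetric, simple functions, and $\f$ is injective; this gives a graded isomorphism of $C^\infty(B_p\G)$-modules. That it is a morphism of right $C^\bullet(\G)$-modules is a direct unwinding of definitions, comparing the module structure on $\Omega^q(B_\bullet\G,\tar^*C)$ with the cup product \eqref{cup_prod} on $C^\bullet(\bG)$; the only point to note is that a product of a $(q{+}1)$-homogeneous simple skew-symmetric function with a $0$-homogeneous one is again of the same type, so the module structure restricts to $C^\bullet_\ext(\bG)$.

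The substance is the compatibility $\frakf\circ\delta = \delta\circ\frakf$, i.e. that the simplicial differential \eqref{G_diff} on $C^\bullet_\ext(\bG)$ corresponds under $\frakf$ to the differential $\delta$ on $\Omega^q(B_\bullet\G,\tar^*C)$ defined via the $\partial_i^*$ and the twist by $\Delta_{g_1}$. First I would observe that each face map $\partial_i\colon B_p\bG\to B_{p-1}\bG$ is a VB-groupoid morphism (Lemma \ref{p_vector}), hence its pullback is a vector-bundle morphism of the fiber-product bundles; concretely, for $1\le i\le p$ the face map $\partial_i$ of $\bG_q$ is built componentwise from the tangent-groupoid face maps and the action-groupoid face maps, and under \eqref{re_arrag} it becomes $\partial_i$ of $TB_p\G$ in each of the $q$ tangent slots together with $\partial_i$ of $\tar^*C^*$ in the last slot. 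The tangent face maps $T\partial_i$ pull back linear functions $\ell_{\mu}\circ\pr_j$ to $\ell_{\partial_i^*\mu}\circ\pr_j$, so pulling back the generator $\frakf(\omega)$ with $\omega=(\mu_1\wedge\cdots\wedge\mu_q)\otimes\xi$ yields $\frakf(\partial_i^*\omega)$ for $i\ge1$. The face map $\partial_0$, however, drops the first arrow $g_1$, and on the $C^*$-slot it is $\tar_\V$ of the action groupoid $\tar^*C^*\toto C^*$, whose source/target involve the dual action $\Delta_g^*$; tracing this through the pairing $\langle\xi,\cdot\rangle=\ell_\xi\circ\pr_{q+1}$ is exactly what produces the twist $\Delta_{g_1}\circ\partial_0^*$ in the definition of $\delta$ on forms. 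I would make this precise by evaluating $(B_p\partial_0)^*\frakf(\omega)$ on a point $(\underline U^{(1)},\dots,\underline U^{(p)})$ and matching it termwise with $\frakf(\Delta_{g_1}\circ\partial_0^*\omega)$, using $m_\V((g_1,\xi_1),(h,\xi_2))=(g_1h,\xi_1)$ and $\sour_\V(g,\xi)=\Delta_g^*\xi$ from Example \ref{ex:repG}. Summing with signs $(-1)^i$ then gives the claim; the $p=1$ case with its explicit $\tar^*\omega-\Delta_g\circ\sour^*\omega$ is checked separately in the same way from $\partial_0=\tar_\V$, $\partial_1=\sour_\V$.

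The main obstacle I anticipate is bookkeeping the action-groupoid contraction correctly: the $C^*$-factor sits inside $\bG_q$ with a \emph{right} adjoint action (Example \ref{ex:repG}), so the $\partial_0$-pullback on that slot is $\xi\mapsto\Delta_{g_1}^*\xi$, and one must verify that dualizing this pairing with $C$-valued forms reproduces precisely the left action $\Delta_{g_1}$ appearing in $\delta\omega|_{(g_1,\dots,g_p)}$, with no stray sign or transpose. Once the single identity $(B_p\partial_0)^*\circ\frakf = \frakf\circ(\Delta_{g_1}\circ\partial_0^*)$ is established, everything else is formal, and the proposition follows; in particular combining it with the normalization condition on cochains shows $\frakf$ descends to the isomorphism $H^\bullet_\ext(\bG)\cong H^\bullet(\Omega^q(B_\bullet\G,\tar^*C))$, though that cohomological statement is not part of the present proposition.
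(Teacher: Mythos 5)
Your proposal is correct and follows essentially the same route as the paper: the paper's proof likewise reduces everything to the identities $\bpartial_i^*\frakf_\omega=\frakf_{\partial_i^*\omega}$ for $i\geq 1$ and $\bpartial_0^*\frakf_\omega=\frakf_{\Delta_{g_1}\circ\,\partial_0^*\omega}$ (plus the analogous compatibility with degeneracy maps), fibrewise over $B_p\G$, with the $\Delta_{g_1}$-twist arising exactly as you describe from the source map $\sour_\V(g,\xi)=\Delta_g^*\xi$ of the action groupoid $\tar^*C^*$. Your write-up actually spells out more of the verification of the $\partial_0$ identity than the paper does.
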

\begin{proof}
Let $\partial_i: B_{p+1}\G \to B_{p} \G$, $\bpartial_i: B_{p+1}\bG \to B_p\bG$, $i=0,\dots, p+1$, be the face maps and $s_j: B_{p-1}\G \to B_{p}\G$, $\bs_j: B_{p-1}\bG \to B_p\bG$, $j=0, \dots, p-1$, be the degenaracy maps for $\G$ and $\bG$, respectively. The result follows from the fact that
\begin{align*}
\bpartial^*_0 \frakf_\omega & = \frakf_{g_1 \cdot \partial_0^*\omega}, \,\,\, \bpartial^*_i \frakf_\omega = \frakf_{\partial_i^*\omega},\\
\bs^*_j \frakf_\omega & = \frakf_{s_j^*\omega},\,\,\, \forall \,\omega \in \Omega^{q}(B_p \G, \tar^*C),
\end{align*}
when restricted to the fiber over $(g_1,\dots, g_p) \in B_p\G$.
\end{proof}

%In the following, we shall denote the cohomology of the complex $C^{\infty}_\ext(B_{\bullet}\bG)$ simply by $H^{\bullet}_{\ext}(\bG)$. It is isomorphic to $H(\Omega^q(B_{\bullet}\G,\tar^*C))$ via the dg-isomorphism $\frakf$ \eqref{diff_iso}.

\paragraph{The Weil complex.}
The Lie algebroid $\bA_q \to \bM$ of the Lie groupoid \eqref{bG} $\bG_q \toto \bM_q$ is the $q$-fold fiber-product\footnote{As with VB-groupoids, Whitney sums of VB-algebroids yield VB-algebroids. Moreover, Whitney sums are preserved by the Lie functor (see \cite{BC}).}  $\times_\g^q T\g \times_\g \pi^*C^* \to \times_M^q TM \times_M C^*$, where $\pi: \g \to M$ denotes the projection map of the Lie algebroid of $\G$. 

\begin{definition}
Let $\alpha \in \Gamma(\bM, \Lambda^\bullet \bA^*)$. We say that $\alpha$ is skew-symmetric with respect to $\bA \to \g$ if
\begin{equation}
\sigma_\g^*\alpha  = sgn(\sigma) \alpha, \,\, \forall \, \sigma \in S_q
\end{equation}
where $\sigma_\g: \bA_q \to \bA_q$ permutes the $q$-coordinates on $\times_\g^q T\g $ according to $\sigma$. Similarly, $\alpha$ is multi-linear with respect to $\bA \to \g$ if
\begin{align}
h_\l^{\g \, *} \alpha & = \l^{q+1} \alpha\\
(0_{i}^\g)^* \alpha  & = 0, \,\, \forall \, i = 1, \dots, q+1
\end{align}
where $\h_\l^\g: \bA_q \to \bA_q$ is the homogeneous structure of the vector bundle $\bA_q \to \g$, $0^\g_i: \bA_{q-1} \to \bA_q$ and $0^\g_{q+1}: \times_{\g}^q T\g \to \bA_q$, $i=1,\dots, q$, are the zero maps.
\end{definition}

Let $\Gamma_{\ext}(\bM, \Lambda^{p} \bA_q^{*})$
 be the subspace of $\Gamma(\bM, \Lambda^{p} \bA_q^{*})$ of skew-symmetric multi-linear forms with respect to $\bA \to \g$. In particular, $\Gamma_{\ext}(\bM, \Lambda^{p} \bA_q^{*}) \subset \Gamma_{(q+1)\mbox{-}\hom}(\bM, \Lambda^{p}\bA_q)$. In the following, we shall frequently omit the reference to $q$ on the Lie algebroid $\bA_q$. There exists a projection $P_{\ext, \g}: \Gamma(\bM, \Lambda^p \bA) \to \Gamma_{\ext}(\bM, \Lambda^{p} \bA^{*})$ obtained exactly as \eqref{proj:ext} composing the projection $P^{\g,p}_{(q+1)\mbox{-}\hom}$ \eqref{proj:hom2} with the ones constructed from the zero maps $0^\g_i$ and permutations $\sigma^\g$ exactly as in \eqref{proj:simple} and \eqref{proj:skew}, respectively. 

\begin{proposition}
The projection $P_{\ext,\g}$ satisfies
$$
P_{\ext,\g} \circ d = d \circ P_{\ext,\g}.
$$
In particular, $\Gamma_{\ext}(\bM, \Lambda^{\bullet} \bA^{*})$ is a subcomplex of $\CE^\bullet(\bA)$.
\end{proposition}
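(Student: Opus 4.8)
The plan is to mirror the proof of the corresponding statement for $P_{\ext,\G}$, replacing each auxiliary groupoid morphism by its image under the Lie functor. By construction $P_{\ext,\g}$ is the composition of three operators on $\Gamma(\bM,\Lambda^\bullet\bA^*)$: the homogeneity projection $P^{\g,\bullet}_{(q+1)\mbox{-}\hom}$ of the vector bundle $\bA_q\to\g$; a ``simple'' projection $P^\g_{\spl}$ assembled as in \eqref{proj:simple} from the zero maps $0^\g_i$ (together with the fiberwise projections forgetting coordinates); and a skew-symmetrization $P^\g_{\sk}$ assembled as in \eqref{proj:skew} from the permutations $\sigma_\g$. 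Since a composition of operators that each commute with $d$ again commutes with $d$, it suffices to treat the three factors separately.

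For the first factor, note that $\bA_q\to\g$ is itself a VB-algebroid: it is the Whitney sum of the VB-algebroids $T\g\to\g$ and $\pi^*C^*\to\g$ over $\g$ (the latter being the infinitesimal counterpart of the VB-groupoid $\tar^*C^*\to\G$ of Example \ref{ex:repG}, cf. Example \ref{example:rep}), and Whitney sums of VB-algebroids are again VB-algebroids, the Lie functor commuting with such fiber products so that this $\bA_q$ is indeed $Lie(\bG_q)$. Applying the Proposition after \eqref{proj:hom2} to $\bA_q\to\g$ with $k=q+1$ then gives $P^{\g,p+1}_{(q+1)\mbox{-}\hom}\circ d=d\circ P^{\g,p}_{(q+1)\mbox{-}\hom}$.

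For the other two factors, the key point is that all the maps entering $P^\g_\spl$ and $P^\g_\sk$ — the permutations $\sigma_\g\colon\bA_q\to\bA_q$, the zero inclusions $0^\g_i\colon\bA_{q-1}\to\bA_q$ and $0^\g_{q+1}\colon\times^q_\g T\g\to\bA_q$, and the coordinate-forgetting projections $\bA_q\to\bA_{q-1}$ — are Lie algebroid morphisms. Indeed, each is the image under $Lie$ of the corresponding groupoid morphism ($\sigma_\G$, $0^\G_i$, and the projections of the Whitney sum defining $\bG_q$), all already identified as Lie groupoid morphisms in the discussion preceding \eqref{perm_relation} and \eqref{zero_relation}; alternatively one checks directly that permuting equal summands, inserting zeros, and dropping summands in a fiber product of Lie algebroids are Lie algebroid morphisms. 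For any Lie algebroid morphism $\psi$ the pullback on Chevalley--Eilenberg cochains satisfies $\psi^*\circ d=d\circ\psi^*$ (the same fact invoked in the proof of the Proposition after \eqref{proj:hom2}); since $P^\g_\spl$ and $P^\g_\sk$ are $\R$-linear combinations of such pullbacks (the ``$(0^\g_I)^*$'' in \eqref{proj:simple} read as pullback along the retraction that sets the $I$-coordinates to zero), both commute with $d$.

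Composing the three factors yields $P_{\ext,\g}\circ d=d\circ P_{\ext,\g}$, and the final assertion is then immediate: if $\alpha=P_{\ext,\g}\alpha$ then $d\alpha=d\,P_{\ext,\g}\alpha=P_{\ext,\g}\,d\alpha$, so $d\alpha$ again lies in $\Gamma_{\ext}(\bM,\Lambda^\bullet\bA^*)$. I do not expect a genuine obstacle here; the only delicate part is the bookkeeping of keeping track of which auxiliary map lives between which of the Lie algebroids $\bA_q$, $\bA_{q-1}$, $\times^q_\g T\g$ and of verifying that each is a Lie algebroid morphism. Once that is settled, the statement reduces to the standard compatibility of Lie algebroid pullbacks with $d$, together with differentiating $(h^\g_\l)^*\circ d=d\circ(h^\g_\l)^*$ at $\l=0$.
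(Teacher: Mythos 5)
Your argument is correct and follows essentially the same route as the paper: the paper likewise observes that $h_\l^\g$, $0_i^\g$ and $\sigma_\g$ are all Lie algebroid morphisms (being $Lie(\h_\l^\G)$, $Lie(0_i^\G)$, $Lie(\sigma_\G)$), so that the pullbacks assembling $P_{\ext,\g}$ each commute with the Chevalley--Eilenberg differential. Your extra care with the coordinate-forgetting projections implicit in \eqref{proj:simple} is a reasonable piece of bookkeeping that the paper leaves tacit, but it does not change the argument.
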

\comment{Commutation of the projection and the differential.}
\begin{proof}
The result follows from the fact that the maps $h_\l^\g, 0_i^\g, \sigma_\g$ are all Lie algebroid morphisms. In fact, 
\begin{equation}\label{Lie_relations}
h_\l^\g = Lie(\h_\l^\G), \, 0_i^\g = Lie(0_i^\G), \, \sigma_\g = Lie(\sigma_\G),
\end{equation}
for the corresponding maps $\h_\l^\G, 0_i^\G, \sigma_\G$ on the Lie groupoid $\bG$.
\end{proof}

In the following, we shall denote by $\CE_\ext^\bullet(\bA)$ and $H^\bullet_\ext(\bA)$ the complex $(\Gamma_{\ext}(\bM, \Lambda^{\bullet} \bA^{*}), d)$ and its cohomology, respectively. Note that $\CE_{\ext}^\bullet(\bA)$ is a right $dg$-module for $\Gamma(\Lambda^\bullet \g) \cong \Gamma_{0\mbox{-}\hom}(\bM, \Lambda^\bullet\bA^*)$ by considering the wedge product.

\begin{proposition}\label{prop:evaluation}
There exists a right  $\Gamma(\Lambda^\bullet\g^*)$-module isomorphism $ev: \CE_{\ext}^\bullet(\bA) \to W^{\bullet,q}(\g, C)$ satisfying
$$
ev \circ d = \mathrm{d}_W \circ ev.
$$
\end{proposition}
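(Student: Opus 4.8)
The plan is to construct $ev$ as an ``evaluation on lifted sections'' map, in direct analogy with the evaluation isomorphisms of Example~\ref{example:rep} and Lemma~\ref{lemma:ev}, and then to verify the three assertions (right-module map, isomorphism, chain map) essentially on a generating set.

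\emph{Step 1: generators of $\Gamma(\bM_q,\bA_q)$.} Writing $\bA_q=\times^q_\g T\g\times_\g\pi^*C^*$, each tangent-prolongation factor $T\g\to TM$ supplies, for $u\in\Gamma(\g)$ and each slot $i\in\{1,\dots,q\}$, a \emph{complete lift} $\widehat u^{\,i}$ and a \emph{core lift} $\check u^{\,i}$, and together with sections covering $\g$ through the $\pi^*C^*$-factor (like the $\chi_u$ of Example~\ref{example:rep}) these generate $\Gamma(\bM_q,\bA_q)$ over $C^\infty(\bM_q)$. All of them are linear for $\bA_q\to\g$ (homogeneous of degree $0$ or $1$), and I would record: (i) $[\widehat u^{\,i},\widehat w^{\,i}]=\widehat{[u,w]}^{\,i}$, with brackets across distinct slots vanishing; (ii) the brackets of a complete lift with a core lift, and with a $\chi$-section, together with the anchors of these sections, which reproduce the connection $\nabla$ and the contractions $i_{\rho(\cdot)}$; (iii) $\widehat{fu}^{\,i}$ equals $f$ (pulled back) times $\widehat u^{\,i}$ plus $df$ (pulled back) times $\check u^{\,i}$. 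These are extensions of the standard lift calculus for Lie algebroids of \cite{Mac-Xu,Mac-Xu3}, and I would collect them in the Appendix.

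\emph{Step 2: the map $ev$, and that it is a module isomorphism.} For $\alpha\in\CE^p_\ext(\bA_q)$ I set $ev(\alpha)=(c_0(\alpha),c_1(\alpha),\dots)$, where $c_k(\alpha)(u_1,\dots,u_{p-k}\mid v_1,\dots,v_k)\in\Omega^{q-k}(M,S^k\g^*\otimes C)$ is obtained by evaluating $\alpha$ on the $\chi$-sections of $u_1,\dots,u_{p-k}$ together with the core lifts of $v_1,\dots,v_k$ placed in a fixed block of $k$ of the $q$ slots, reading the residual linear dependence of $\alpha$ on the $q-k$ remaining slots (through $T\pi$) as a $(q-k)$-form on $M$, pairing the $\pi^*C^*$-slot to get the $C$-coefficient, and polarizing in the $v_j$'s to produce the $S^k\g^*$-dependence. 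The homogeneity, simplicity and skew-symmetry packaged by $P_{\ext,\g}$ are precisely what make $ev(\alpha)_k$ well defined, $\R$-multilinear and skew in the $u$'s, while the Leibniz defect \eqref{leibniz} follows at once from (iii). Bijectivity I would prove in local frames as in Lemma~\ref{lemma:ev}: expanding $\alpha$ in a frame of $\bA_q^*$ adapted to the decomposition $\times^q_\g T\g\times_\g\pi^*C^*$ and applying Taylor's theorem, the coefficient functions of $\alpha$ are in bijection with those of the $c_k$. Compatibility of $ev$ with the right $\Gamma(\Lambda^\bullet\g^*)$-module structures, under $\Gamma(\Lambda^\bullet\g^*)\cong\Gamma_{0\mbox{-}\hom}(\bM,\Lambda^\bullet\bA^*)$, is then a short check on generators.

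\emph{Step 3: the chain-map identity, and the main obstacle.} To show $ev\circ d=\mathrm{d}_W\circ ev$ I would feed the generating sections into Koszul's formula for the Chevalley--Eilenberg differential $d$ of $\bA_q$ and use relations (i)--(ii): the terms involving only $\chi$-sections and complete lifts reproduce the $d_{CE}(c_k)$ summand of \eqref{def:d_W}, while the terms in which a bracket of a complete lift (or $\chi$-section) with another argument produces a core lift give exactly the correction $-\sum_j i_{\rho(v_j)}c_{k-1}$. The hard part is not the brackets but the combinatorics of the signs: $\mathrm{d}_W$ and the Van Est formula \eqref{c_omega} carry the factors $(-1)^k$, $(-1)^{k(k-1)/2}$ and $(-1)^{\epsilon(\sigma,k)}$, and the assertion --- emphasized in the discussion preceding this proposition --- is that all of these are forced once one fixes the order in which the slots, the core lifts and the $\pi^*C^*$-factor enter $\alpha$ in the definition of $ev$. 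Making this ordering convention precise and tracking the Koszul signs through the $TM$-contractions and the polarization is the delicate point; everything else is routine given Step~1.
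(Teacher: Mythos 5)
Your overall strategy is the same as the paper's: define $ev$ by evaluating $\alpha$ on a generating family of lifted sections of $\bA_q\to\bM_q$, verify the Weil-complex axioms from the $C^\infty$-linearity defect of those sections, and obtain the chain-map identity from Koszul's formula together with the bracket and anchor relations of the lifts. Two points in your sketch are, however, not just loose ends but places where the argument as written would go wrong. First, there is no ``complete lift in slot $i$'' as a section of $\bA_q\to\bM_q$: a section covering a nonzero $u\in\Gamma(\g)$ must place a vector over $u(m)$ in \emph{every} tangent slot, so the correct generators are the diagonal tangent lift $\T u=(Tu,\dots,Tu,\chi_u)$ together with the per-slot core lifts $\Z_i v$, which cover the zero section of $\g$ and hence can be concentrated in a single slot. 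Consequently your item (iii) must read $\T(fu)=(f\circ\pi)\cdot\T u+\sum_{j=1}^{q}(\ell_{df}\circ\pr_j)\cdot\Z_j u$, with the sum over \emph{all} slots; it is precisely this sum, reassembled using the skew-symmetry of $\alpha$ and the resulting cyclic-permutation signs, that produces the single defect term $df\wedge c_{k+1}(u_2,\dots|u_1,\cdot)$ in \eqref{leibniz}. A per-slot Leibniz rule would not give this. Relatedly, ``evaluating $\alpha$ on the $\chi$-sections of the $u_i$'' is not defined unless you mean the full lifts $\T u_i$.

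Second, in Step 3 you attribute the correction $-\sum_j i_{\rho(v_j)}c_{k-1}$ to brackets that produce core lifts. That is backwards: the brackets $[\T u_i,\Z_j v_j]=\Z_j[u_i,v_j]$ feed the $[u_i,v_j]$-terms hidden inside $d_{CE}(c_k)$ through the module structure $u\cdot P$, while the contraction term comes from the \emph{anchor} terms of Koszul's formula applied to the core-lift arguments, since $\rho(\Z_j v)=(\rho(v)^{\vl,q}_{(j)},0)$ and the Lie derivative of $\f_\omega$ along a vertical lift is $\f_{i_{\rho(v)}\omega}$ composed with the projection forgetting that slot. Getting this attribution right is not cosmetic, because the factor $(-1)^k$ in \eqref{def:d_W} must be matched against the Koszul signs $(-1)^{j+1}$ and $(-1)^{i+k+1}$ of the two families of arguments term by term; this sign bookkeeping, which you explicitly defer, is in fact the bulk of the proof and cannot be omitted. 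The remaining ingredients you list (well-definedness of $c_k(\alpha)$ from homogeneity, simplicity and skew-symmetry; symmetry in the $v$'s; bijectivity in local frames; compatibility with the right module structures) do match the paper's argument.
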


We refer to the Appendix (see Proposition \ref{prop:evaluation2}) for a proof. It is important to note that Proposition \ref{prop:evaluation} implies that $W^{\bullet,q}(\g,C)$ is a right dg-module for $\Gamma(\wedge^\bullet\g^*)$ as stated in Proposition \ref{Weil:dg}. It is also worth noting that $ev$ is a map defined similarly to \eqref{evmap} (i.e. it evaluates an element $\alpha \in \Gamma_{\ext}(\bM, \Lambda^{p} \bA^{*})$ on a set of generators of $\Gamma(\bM,\bA)$ to give the sequence $(c_0, c_1, \dots) \in W^{p,q}(\g, C)$).  

\begin{remark} 
An alternative characterization of $\Gamma_{\ext}(\bM, \Lambda^{p} \bA^{*})$ can be given by seeing vector bundles as Lie groupoids (with multiplication given by addition on the fibers). Denote $\bA^{(p)} = \times_\bM^p \bA$ and $\g^{(p)} = \times_M^p\g$.  One has $\bA^{(p)} = B_p \bA$ and  $\g^{(p)}=B_p\g$. . In particular, the isomorphism \eqref{re_arrag} implies that
\begin{equation}\label{Ap}
\bA^{(p)} \cong \times_{\g^{(p)}}^q T\g^{(p)} \times_{\g^{(p)}} \pi^*C^*,
\end{equation}
as vector bundles over $\g^{(p)}$, where $\pi: \g^{(p)} \to M$ is defined (following the previous convention for $t: \G^{(p)} \to M$) as $\pi(u_1, \dots, u_p) = \pi(u_1)$. Hence, $\Omega^q(\g^{(p)}, \pi^*C)$, the space of differential forms on $\g^{(p)}$ with values on $C$, can be embedded as a subspace of $\C(\bA^{(p)})$ via \eqref{diff_iso}. Similarly, $\Gamma(\bM, \Lambda^p \bA)$ can also be embedded as a subspace of $\C(\bA^{(p)})$. One can now check that
$$
\Gamma_{\ext}(\bM, \Lambda^{p} \bA^{*}) = \Gamma(\bM, \Lambda^p \bA) \cap \Omega^q(\g^{(p)}, \pi^*C).
$$
We would like to mention that in the case where $C=\R$, with the trivial representation, Li-Bland and Meinrenken \cite{L-BM} gave a similar characterization of the Weil algebra as a subspace of differential forms on $\g$. In this context, the case $p=1$ was already studied by Bursztyn-Cabrera-Ortiz in \cite{BC, BCO}.
\end{remark}

\subsection{Proof of the Van Est Theorem for differential forms with coefficients}
Let $\VE: (C^{\bullet}(\bG), \delta_{\bG}) \to (\Gamma(\bM, \Lambda^{\bullet} \bA^*), d)$ be the Van Est map \eqref{vanest} for the groupoid $\bG \toto \bM$. 
\begin{proposition}
$$
\VE \circ P_{\ext}^{\G,p}  = P_{\ext}^{\g,p} \circ \VE.
$$ 
In particular, $\VE(C^{\infty}_{\ext}(B_p\bG)) \subset \Gamma_{\ext}(\bM, \Lambda^{p} \bA^{*})$.
\end{proposition}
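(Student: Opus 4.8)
The plan is to exploit that $P_{\ext}$ is assembled, on both the groupoid and the algebroid side, from three mutually commuting projections (see \eqref{proj:ext}): the $(q+1)$-homogeneity projection $P_{(q+1)\mbox{-}\hom}$, the ``simple'' projection $P_{\spl}$ of \eqref{proj:simple}, and the skew-symmetrization $P_{\sk}$ of \eqref{proj:skew}. I would show that $\VE$ intertwines each of these three separately and then compose.

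For the homogeneity projection nothing new is needed: Proposition \ref{hom_vanest}, applied with $k=q+1$, already gives $\VE\circ P^{\G,p}_{(q+1)\mbox{-}\hom}=P^{\g,p}_{(q+1)\mbox{-}\hom}\circ\VE$. For $P_{\spl}$ and $P_{\sk}$ the key observation is that each is, by construction, a finite $\Z$- (resp.\ $\mathbb{Q}$-)linear combination of pullback operators of the form $(B_p\phi)^{*}$, where $\phi$ ranges over Lie groupoid morphisms of $\bG$: for $P_{\sk}$ these are the permutation automorphisms $\sigma_{\G}$, and for $P_{\spl}$ they are the identity together with the endomorphisms $0^{\G}_{I}\circ\pr^{\G}_{I}$ that replace the tangent coordinates indexed by $I$ with zeros; here one uses \eqref{perm_relation} and \eqref{zero_relation} to identify $B_p\sigma_{\G}$ and $B_p 0^{\G}_{i}$ with the permutation and zero maps on $B_p\bG$ entering \eqref{proj:skew} and \eqref{proj:simple}. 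Since the structure maps of $\bG$ are defined componentwise (see \eqref{bG}), all these $\phi$ really are groupoid morphisms, so the naturality Lemma \ref{commut_vannest} applies term by term: $\VE\big((B_p\phi)^{*}f\big)=Lie(\phi)^{*}\VE(f)$. By \eqref{Lie_relations} and functoriality of the Lie functor, $Lie(\sigma_{\G})=\sigma_{\g}$, $Lie(0^{\G}_{i})=0^{\g}_{i}$ and $Lie(\pr^{\G}_{i})=\pr^{\g}_{i}$, and the algebroid-side projections $P_{\spl,\g}$, $P_{\sk,\g}$ are built out of exactly the same linear combinations of the maps $0^{\g}_{I}\circ\pr^{\g}_{I}$ and $\sigma_{\g}$. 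Hence $\VE\circ P_{\spl,\G}=P_{\spl,\g}\circ\VE$ and $\VE\circ P_{\sk,\G}=P_{\sk,\g}\circ\VE$, and composing the three identities in the order $P_{\ext}=P_{\sk}\circ P_{\spl}\circ P_{(q+1)\mbox{-}\hom}$ yields $\VE\circ P^{\G,p}_{\ext}=P^{\g,p}_{\ext}\circ\VE$. The ``in particular'' assertion is then immediate: $\Gamma_{\ext}(\bM,\Lambda^{p}\bA^{*})=P^{\g,p}_{\ext}\big(\Gamma(\bM,\Lambda^{p}\bA^{*})\big)$, so for $f\in C^{\infty}_{\ext}(B_p\bG)$ one gets $\VE(f)=\VE(P^{\G,p}_{\ext}f)=P^{\g,p}_{\ext}\VE(f)\in\Gamma_{\ext}(\bM,\Lambda^{p}\bA^{*})$.

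The one step deserving care — and where a reader might hesitate — is recognizing that the operators $0^{*}_{I}$ entering \eqref{proj:simple} are pullbacks along \emph{groupoid} endomorphisms of $\bG$, namely $0^{\G}_{I}\circ\pr^{\G}_{I}$, and not merely along smooth maps of manifolds; granting this (which is again a consequence of the componentwise definition of the structure maps of $\bG$ together with \eqref{perm_relation}--\eqref{zero_relation}), Lemma \ref{commut_vannest} does all the work, and there is no genuine analytic obstacle: the argument is pure bookkeeping of the three projections and the functoriality relations \eqref{Lie_relations}.
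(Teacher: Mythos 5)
Your argument is correct and is essentially the paper's own proof, just spelled out in more detail: the paper likewise invokes Proposition \ref{hom_vanest} for the homogeneity projection and then handles $P_{\spl}$ and $P_{\sk}$ by applying the naturality Lemma \ref{commut_vannest} together with the relations \eqref{perm_relation}, \eqref{zero_relation} and \eqref{Lie_relations}. Your explicit observation that the operators in \eqref{proj:simple} are pullbacks along the groupoid endomorphisms $0^{\G}_{I}\circ\pr^{\G}_{I}$ is exactly the point the paper leaves implicit.
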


\begin{proof}
From Proposition \eqref{hom_vanest}, one already has that $\VE$ satisfies $\VE\circ P_{q+1\mbox{-}\hom}^{\G,p} = P_{q+1\mbox{-}\hom}^{\g,p} \circ \VE$. So, it remains to show that $\VE$ commutes with the projections associated to the skew-symmetry and the simplicity properties. But this follows from Lemma \ref{commut_vannest} together with the relations \eqref{perm_relation}, \eqref{zero_relation} and \eqref{Lie_relations}.
\end{proof}

Let $\VE_{\ext}: C^{\infty}_\ext(B_p\bG) \to \Gamma_{\ext}(\bM, \Lambda^{p} \bA^{*})$ be the restriction of the Van Est map. 

\begin{lemma}\label{tech_lemma}
The following diagram 
\begin{equation}
\begin{xy}
(0,35)*+{\Omega^q(B_p\G, \tar^*C)}="A"; (40,35)*+{C_\ext^{\infty}(B_p\bG)}="B"; 
(0,20)*+{W^{p,q}(\g,C)}="C"; (40,20)*++{{\Gamma_{\ext}(\bM, \Lambda^{p} \bA^{*})}}="D";
{\ar@{->}^{\f} "A"; "B"}
{\ar@{->}^{\VE_{\ext}}"B";"D"}
{\ar@{->}^{\mathrm{ev}}"D";"C"}
{\ar@{->}_{\VE_{\Omega}}"A";"C"}
\end{xy}
\end{equation}
commutes.
\end{lemma}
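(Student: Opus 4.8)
The plan is to reduce the statement to a computation on a generating family of sections of $\bA$ and then invoke two lift identities. Note first that the three maps occurring in the square are already under control: $\f$ is the dg-module isomorphism of Proposition \ref{commutation}, $\mathrm{ev}$ is the one of Proposition \ref{prop:evaluation}, and $\VE_\ext$ is just the restriction of the Van Est map of $\bG\toto\bM$ to the $\ext$-subcomplexes (well defined by the Proposition just established). Hence the claimed commutativity amounts to an equality of two linear maps $\Omega^q(B_p\G,\tar^*C)\to W^{p,q}(\g,C)$. Since an element of $W^{p,q}(\g,C)$ is a sequence $(c_0,c_1,\dots)$ determined by its values on sections of $\g$, and since --- exactly as for the map of \eqref{evmap} --- the $k$-th component of $\mathrm{ev}\big(\VE_\ext(\f_\omega)\big)$ is obtained by evaluating $\VE(\f_\omega)\in\Gamma_\ext(\bM,\Lambda^p\bA^*)$ on $(p-k)$ tangent (complete) lifts $u_1^{T},\dots,u_{p-k}^{T}$ of sections of $\g$ together with $k$ vertical lifts $v_1^{\uparrow},\dots,v_k^{\uparrow}$ into the $q$ copies of $T\g$ (the same generating family used in the Appendix in the proof of Proposition \ref{prop:evaluation2}), it suffices to compute $\VE(\f_\omega)$ on such a $p$-tuple of sections of $\bA$ and to identify the result with $c_k(\omega)(u_1,\dots,u_{p-k}|v_1,\dots,v_k)$ as given by \eqref{c_omega}.

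The computation rests on two lift identities, to be proved in the Appendix by extending to the nerve $B_\bullet\G$ the classical lift formulas for vector fields on Lie groupoids \cite{Mac-Xu,Mac-Xu3}. First I would show that the right-invariant vector field on $\bG$ attached to a complete lift $u^{T}$ (resp.\ a vertical lift $u^{\uparrow}$) placed in the $j$-th $T\G$-slot is the complete (resp.\ vertical) lift of $\overrightarrow{u}$ in that slot; under the isomorphism $B_p\bG\cong(\times^q_{B_p\G}TB_p\G)\times_{B_p\G}\tar^*C^*$ this means that the vector field $B_p(u^{T})$ of \eqref{Bp_vector} is the complete lift of $B_pu$ and $B_p(u^{\uparrow})$ its vertical lift. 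Second, reading forms as functions via $\f$ (see \eqref{diff_iso} and Section \ref{ex:tensor}), I would check that $R_{u^{T}}=s_0^*\circ\Lie_{B_p(u^{T})}$ restricted to $\f\big(\Omega^q(B_p\G,\tar^*C)\big)$ reproduces the form operator $R_u$ --- the twist by $\Delta$ appearing in the definition of $R_u$ is produced automatically, being built into the action-groupoid factor $\tar^*C^*\toto C^*$ through Example \ref{ex:repG} --- so that $R_{u^{T}}\circ\f=\f\circ R_u$; and that $R_{u^{\uparrow}}=s_0^*\circ\Lie_{B_p(u^{\uparrow})}$ acts on simple $(q+1)$-homogeneous functions as the contraction $i_{B_pu}$ (the ``vertical lift $=$ interior product'' principle), so that $R_{u^{\uparrow}}\circ\f=\pm\,\f\circ J_u$.

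Granting these, I would iterate the two commutation relations: for any word $\chi_{\sigma(1)},\dots,\chi_{\sigma(p)}$ assembled from the chosen complete and vertical lifts one obtains
\[
R_{\chi_{\sigma(1)}}\cdots R_{\chi_{\sigma(p)}}\,\f_\omega \;=\; \pm\;\f\big(D_{\sigma(1)}\cdots D_{\sigma(p)}\,\omega\big),
\]
where $D_i$ is $J_{v_i}$ or $R_{u_{i-k}}$ precisely as in \eqref{def:D}. Summing over $\sigma\in S_p$ with signs $\mathrm{sgn}(\sigma)$ as in \eqref{vanest} then yields exactly \eqref{c_omega}: the overall factor $(-1)^{k(k-1)/2}$ and the permutation-dependent sign $(-1)^{\epsilon(\sigma,k)}$ are the reordering signs of the $k$ odd (anticommuting) operators $J_{v_j}$, combined with the ordering convention fixed in the definition of $\mathrm{ev}$. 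One must also observe that the terms in which a vertical-lift operator is applied after its $T\G$-slot has already been ``used up'' drop out, consistently with the simplicity and skew-symmetry conditions cutting out $C^\infty_\ext$ and $W^{p,q}(\g,C)$, and with the passage from $q$ form-legs to $(q-k)$ legs plus $k$ symmetric $\g^*$-slots.

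I expect the main obstacle to be the second lift identity together with the attendant sign and symmetry bookkeeping: following the $\Delta$-twist through the action-groupoid factor, controlling the interaction of the $q$ distinct copies of $T\G$ with the skew-symmetrization, and pinning down all signs so that \eqref{c_omega} comes out verbatim. This is exactly why the paper relegates the explicit lift formulas and the verification of \eqref{c_omega} to the Appendix.
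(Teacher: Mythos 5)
Your proposal is correct and follows essentially the same route as the paper: reduce to the generating sections given by tangent lifts paired with $\chi_u$ and by vertical lifts $\Z_i v$, establish the two lift identities $\bs_0^*\Lie_{B_p(\T u)}\f_\omega=\f_{R_u\omega}$ and $\bs_0^*\Lie_{B_p(\Z_i v)}\f_\omega=(-1)^{i-1}\f_{J_v\omega}\circ\pr_{(i)}$ (the paper's Lemma \ref{lem:BpT}), and then iterate while tracking the projections and the reordering signs that produce $(-1)^{k(k-1)/2}$ and $(-1)^{\epsilon(\sigma,k)}$ in \eqref{c_omega}. The combinatorial bookkeeping you defer is exactly what the paper's Appendix carries out by induction on the position of the vertical-lift operators.
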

 
The proof of Lemma \ref{tech_lemma} consists of  a direct but technical verification that we postpone until Appendix \ref{weil}. 
Finally, we are ready to prove our Theorem \ref{diff_vanest}. 

\medskip

\begin{proof}(\textbf{of Theorem \ref{diff_vanest}})
As $\mathrm{ev}$ and $\f$ are dg-module isomorphisms, it remains to show that $\VE_{\ext}$ induces isomorphisms on the cohomology $H^{p}(C^{\infty}_\ext(B_\bullet\bG)) \to H^{p}(\Gamma_{\ext}(\bM, \Lambda^{\bullet} \bA^{*}))$ for $p \leq p_0$ and a mononomorphism for $p=p_0+1$. Since the ordinary Van Est map $\VE_\bG$ for $\bG$ satisfyies the above, the theorem then follows from the Homological Lemma by means of the underlying projections exactly as in the proof of Theorem \ref{main}.
\end{proof}

\comment{Remark about the other module structure:}
\begin{remark}
The space $\Omega^\bullet(B_\bullet\G, \tar^*C)$ is a bigraded right module for the bigraded algebra $\Omega^\bullet(B_{\bullet}\G)$ with the cup product \cite{dupont}. The multiplication is given by
$$
\omega \cup \eta = (-1)^{qp'}\pr^*\omega \wedge \pr'^*\eta,\,\, \omega \in \Omega^q(B_p\G, \tar^*C), \, \eta \in \Omega^{q'}(B_{p'}\G),
$$  
where $\pr: B_{p+p'}\G \to B_{p}\G$ (resp. $\pr':B_{p+p'}\G \to B_{p'}\G$) is the projection onto the first $p$ arrows (resp. last $p'$ arrows). It is interesting to note that such module structure can also be described within the VB-groupoid context. Indeed, by considering the projections $\widetilde{\pr}: \bG^{q+q'} \to \bG^{q}$  and $\widetilde{\pr'}: \bG^{q+q'} \to \times^{q'}_\G T\G$, one can check that $\f_{\omega \cup \eta} \in C^{\infty}(B_{p+p'}\bG^{q+q'})$ can be obtained from $(B_{p}\widetilde{\pr})^*\f_\omega \in C^{\infty}(B_{p}\bG^{q+q'})$ and  $(B_{p'}\widetilde{\pr'})^*\f_{\eta} \in  C^{\infty}(B_{p'}\bG^{q+q'})$ by skew-symmetrizing their cup product
$$
(B_{p}\widetilde{\pr})^*\f_\omega \star (B_{p'}\widetilde{\pr'})^*\f_{\eta} \in C^{\infty}(B_{p+p'}\bG^{q+q'}).
$$
Similarly, one can define a bigraded module structure on $W^{\bullet,\bullet}(\g, C)$ for the Weil algebra $W^{\bullet,\bullet}(\g)$ \cite{AC} using the wedge product for their models as subcomplexes of the Chevalley-Eilenberg complexes. These bigraded module structures should be useful for studying ``multiplicative linear flat'' connections on $C$.  
\end{remark}

\appendix

%\section{VB-groupoid cohomology}\label{app:ruth}
%In this Appendix, we provide a proof of Lemma \ref{lma:vbgc0}. We shall be refering to conditions \ref{cond1} and \ref{cond2} defining the subcomplex $C^\bullet_{VB}(\V) \subset C^\bullet_{1\mbox{-}\hom}(\V)$ as defined in Section \ref{sec:vbcom}.

\section{Formulas for the evaluation map}\label{app:formulas}
In this Appendix we turn to the proof of Lemma \ref{tech_lemma} relating the formula for $\VE_\Omega$ with the standard Van Est map for $\bG$ and $\bA$. In the process, we also give a detailed description (c.f. eq. \eqref{eq:evdef} below) of the map $ev:\Gamma_{\ext}(\bM, \Lambda^{p} \bA_q^{*}) \to W^{p,q}(\g,C)$ making use of special sections of $\bA_q$.

\subsection{Special sections}
Let $TB \rightarrow B$ be the tangent bundle of $B$. Given a vector field $X \in \mathfrak{X}(B)$, let $X^T, X^\vl \in \mathfrak{X}(TB)$ be its \textit{tangent} and \textit{vertical lift}\footnote{The flow at time $\epsilon$ of $X^T$ (resp. $X^\vl$) is the derivative of the flow at time $\epsilon$ of $X$ (resp. translation by $\epsilon X$)}. respectively. Define $X^{T,q}, \, X^{\vl,q}_{(j)}$, $j=1, \dots, q$ vector fields on the manifold $\times_B^q TB$ as follows
\begin{align}
X^{T,q}(v_1, \dots, v_q) & = (X^T(v_1), \dots, X^T(v_q))\\
X^{\vl,q}_{(j)}(v_1, \dots, v_q) & = (0_{v_1}, \dots, X^\vl(v_j), \dots, 0_{v_q}).
\end{align}

Let now $\G \toto M$ be a Lie groupoid with Lie algebroid $\pi: \g \to M$. For a representation $C \to M$ of $\G$, consider the Lie groupoid \eqref{bG}, $\times_\G^q T\G \times_\G \times \tar^*C^* \toto \times_M^q TM \times_M C^*$, with corresponding Lie algebroid $\times_\g^q T\g \times_\g \pi^*C^*$. For a section $u: M \to \g$, let $Tu: TM \to T\g$ be its derivative and $\chi_u : C^* \to \pi^*C^*=C^*\times_M \g$ the section defined by \eqref{linear_C}. The expressions
\begin{align*}
\T u(x_1, \dots, x_q,\xi) & = (Tu(x_1), \dots, Tu(x_q), \chi_u(\xi))\\
\Z_iu(x_1, \dots, x_q, \xi) & = (T0(x_1), \dots , T0(x_i) + \left.\frac{d}{d\epsilon}\right|_{\epsilon=0} \hspace{-15pt}(\epsilon u(m))\,, \dots, T0(x_q), 0_\xi),
\end{align*}
for $i=1, \dots, q$, $x_1, \dots, x_q \in T_mM$, $\xi  \in C^*_m$ and $m \in M$, define sections of the Lie algebroid $\bA=\times^q_\g T\g \times_\g \pi^*C^* \to \bM=\times^q_M TM \times_M C^*$. It is known that $\T u$ and $\Z_iu$, $i=1, \dots, q$, generate $\Gamma(\bM, \bA)$ as a $C^{\infty}(\bM)$ module \footnote{This follows from a general result regarding \textit{core} and \textit{linear sections} of double vector bundles (see \cite{Mac-doubles})}.

\begin{lemma}\label{lem:BpT}
As vector fields on $B_p( \times_\G^q T\G \times_\G \tar^*C^*) \cong \times_{B_p\G}^q TB_p\G \times_{B_p\G} \tar^*C^*$, the following identities hold
\begin{align}
\label{lift1} B_p(\T u) &= ((B_pu)^{T,q}, X_u) \\
\label{lift2} B_p(\Z_iv) &= ((B_pv)^{\vl,q}_{(i)}, 0),
\end{align}
where $X_u \in \mathfrak{X}(\tar^*C^*)$ is the vector field whose time-$\epsilon$ flow is given by
$$
(g_1,\dots, g_p, \xi) \mapsto (\psi_u^\epsilon(g_1,\dots, g_p), \Delta^*_{\phi_u^\epsilon(\tar(g_1))^{-1}}(\xi)).
$$
In particular, for $\omega \in \Omega^q(B_p\G, \tar^*C)$,
\begin{align}
\label{Lie} \bs_0^* \Lie_{B_p (\T u)}\f_{\omega} & = \f_{R_u \omega}\\
\label{contraction} \bs_0^* \Lie_{B_p (\Z_i u)}\f_{\omega} & = (-1)^{i-1} \f_{J_u \omega} \circ \pr_{(i)}^{p-1,q},
\end{align}
where $R_u$ and $J_u$ were defined in eq. \eqref{eq:RJ}, $\pr_{(i)}^{\cdot,q}: \times_{B_\cdot\G}^q TB_\cdot\G\times_{B_\cdot\G} \tar^*C^*\to \times_{B_\cdot \G}^{q-1} TB_\cdot\G\times_{B_\cdot\G} \tar^*C^*$ is the projection which forgets the $i$-th component and $\bs_0$ is the first degeneracy map for $\bG$.
\end{lemma}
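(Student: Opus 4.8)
The statement comprises two structural lift identities, \eqref{lift1}--\eqref{lift2}, which compute $B_p$ of the special sections $\T u$ and $\Z_iv$ of $\bA_q$, together with two Lie-derivative identities, \eqref{Lie}--\eqref{contraction}, which are then deduced from the first pair by unwinding the definitions \eqref{eq:RJ} of $R_u, J_u$ and \eqref{diff_iso} of $\f$. The plan is to treat the two groups in that order.

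For \eqref{lift1}--\eqref{lift2}, recall that for any section $\chi$ of the Lie algebroid of a Lie groupoid $\mathcal{H}$ the vector field $B_p\chi$ is, by \eqref{Bp_vector}, the right-invariant vector field $\overrightarrow{\chi}$ placed in the first arrow; and that $\bG_q = \times_\G^q T\G \times_\G \tar^*C^*$ has componentwise structure maps, so that a section $(s_1,\dots,s_q,s_{q+1})$ has right-invariant vector field $(\overrightarrow{s_1},\dots,\overrightarrow{s_q},\overrightarrow{s_{q+1}})$ computed separately in each component groupoid ($T\G$ for the first $q$, $\tar^*C^*$ for the last). On the $T\G$-components I would invoke the standard lift theory for tangent groupoids (\cite{Mac-Xu, Mac-Xu3}): the section $Tu$ of the Lie algebroid $T\g \to TM$ of $T\G \toto TM$ has right-invariant vector field the tangent lift $(\overrightarrow{u})^{T}$, while the ``core'' section $x \mapsto T0(x) + \frac{d}{d\epsilon}|_0(\epsilon v(m))$ has right-invariant vector field the vertical lift $(\overrightarrow{v})^{\vl}$. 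On the $\tar^*C^*$-component, the right-invariant vector field of $\chi_u$ in the action groupoid $\tar^*C^*$ is the one whose flow is $(g,\xi) \mapsto (\phi^u_\epsilon(g), \Delta^*_{\phi^u_\epsilon(\tar(g))^{-1}}\xi)$, which follows from the fact that right translation in an action groupoid is implemented by the $\G$-action. Placing these at the first arrow and transporting through \eqref{re_arrag}---whose specialization to $q=1$ and trivial $C$ is precisely the canonical identification $B_p(T\G) \cong T(B_p\G)$, carrying $B_p(Tu)$ to $(B_pu)^{T}$ and $B_p$ of the core section to $(B_pu)^{\vl}$---produces exactly the right-hand sides of \eqref{lift1} and \eqref{lift2}, the last component being $X_u$ for $\T u$ and $0$ for $\Z_iv$ (since the $\tar^*C^*$-entry of $\Z_iv$ is the zero section $0_\xi$).

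Given the lifts, \eqref{Lie} follows from a flow computation. By \eqref{lift1} the flow of $B_p(\T u)$ is $T\psi^u_\epsilon$ on each tangent factor and $(g_\bullet,\xi) \mapsto (\psi^u_\epsilon(g_\bullet), \Delta^*_{\phi^u_\epsilon(\tar(g_1))^{-1}}\xi)$ on the $\tar^*C^*$-factor; pulling back $\f_\omega$ and using $\f_\omega(v_1,\dots,v_q,\xi) = \langle \xi, \omega(v_1,\dots,v_q)\rangle$ together with the adjoint relation $\langle \Delta^*\xi, c\rangle = \langle \xi, \Delta c\rangle$ gives exactly $\f_{\Delta_{\phi^u_\epsilon(\tar(g_1))^{-1}}\circ\psi^{u\,*}_\epsilon\omega}$ as the pullback. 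Differentiating at $\epsilon=0$ (which commutes with the $\R$-linear map $\f$) yields $\Lie_{B_p(\T u)}\f_\omega = \f_{\frac{d}{d\epsilon}|_0(\Delta_{\phi^u_\epsilon(\tar(g_1))^{-1}}\circ\psi^{u\,*}_\epsilon\omega)}$, and applying $\bs_0^*$ together with the relation $\bs_0^*\f_\omega = \f_{s_0^*\omega}$ (established in the proof of Proposition \ref{commutation}) gives $\f_{R_u\omega}$ by \eqref{eq:RJ}. For \eqref{contraction}: by \eqref{lift2} the field $B_p(\Z_iu)$ acts only on the $i$-th tangent factor, via the vertical lift of $B_pu$, and trivially on the remaining tangent factors and on $\tar^*C^*$; since $\f_\omega$ is fibrewise-linear in each tangent argument, $\Lie_{(B_pu)^{\vl}_{(i)}}\f_\omega$ is obtained by substituting $B_pu$ into the $i$-th slot of $\omega$, hence is independent of the $i$-th variable. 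Writing $\omega(\dots,B_pu,\dots) = (-1)^{i-1}(i_{B_pu}\omega)(\dots\widehat{\phantom{v}}\dots)$ identifies this with $(-1)^{i-1}\,\f_{i_{B_pu}\omega}\circ\pr^{p,q}_{(i)}$; applying $\bs_0^*$, commuting it past $\pr_{(i)}$ (these operations are independent), and again using $\bs_0^*\f_\omega = \f_{s_0^*\omega}$ with \eqref{eq:RJ} produces $(-1)^{i-1}\f_{J_u\omega}\circ\pr^{p-1,q}_{(i)}$.

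The main obstacle is the first group of identities: faithfully matching the componentwise right-invariant-vector-field description on $\bG_q$ with the tangent and vertical lifts of $B_pu$ on $B_p\G$ and with the field $X_u$ on $\tar^*C^*$, all transported through the reshuffling isomorphism \eqref{re_arrag}. Once \eqref{lift1}--\eqref{lift2} are in place the rest is bookkeeping: tracking the $\Delta^*$-twist on the $\tar^*C^*$-factor in the flow of $X_u$, and following the sign $(-1)^{i-1}$ through the skew-symmetrization implicit in the definition of $\f$.
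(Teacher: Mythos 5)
Your proposal is correct and follows essentially the same route as the paper: the structural identities \eqref{lift1}--\eqref{lift2} are obtained from the componentwise description of right-invariant vector fields on $\bG_q$, the known tangent/vertical lift properties of $Tu$ and $\Z u$ on $T\G$ from \cite{Mac-Xu}, the flow of $\overrightarrow{\chi_u}$ on the action groupoid, and the rearrangement isomorphism \eqref{re_arrag}; the Lie-derivative identities then follow by differentiating the flow pullback of $\f_\omega$ and applying $\bs_0^*$ via the commutation relations of Proposition \ref{commutation}. Your extra detail on the $(-1)^{i-1}$ sign in \eqref{contraction} fills in what the paper dismisses as ``follows similarly.''
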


\begin{proof}
For $u \in \Gamma(\g)$, consider the sections $\Z u, Tu$ of $T\g \to TM$, where
$$
\Z u (x) =T0(x) + \left.\frac{d}{d\epsilon}\right|_{\epsilon=0}(\epsilon u(m)), \,\, x \in T_mM.
$$
One has that
$\overrightarrow{T u} = \overrightarrow{u}^{T}$,
$\overrightarrow{\Z u} = \overrightarrow{u}^{\vl}$,
as vector fields on $T\G \toto TM$ (see \cite{Mac-Xu}). Also, the flow of the right invariant vector field $\overrightarrow{\chi_u} \in \mathfrak{X}(\tar^*C^*)$ is given by
$$
(g,\xi) \mapsto (\phi_u^\epsilon(g), \phi_u^\epsilon(\tar(g))^{-1}\cdot \xi).
$$
The identities \eqref{lift1}, \eqref{lift2} now follow from analysing the flows together with the rearragement isomorphism \eqref{re_arrag}. Hence, for $\omega \in \Omega^q(B_p\G, \tar^*C)$, 
\begin{align*}
(\Lie_{B_p(\T u)} \f_\omega)|_{(\overline{U}_1, \dots, \overline{U}_q, (g_1, \dots, g_p, \xi))} & = \left.\frac{d}{d\epsilon}\right|_{\epsilon=0}  \f_\omega(T\psi_u^\epsilon(\overline{U}_1), \dots, T\psi_u^\epsilon(\overline{U}_q), (\psi_u^\epsilon(g_1,\dots, g_p), \phi_u^\epsilon(\tar(g_1))^{-1}\cdot \xi))\\
& = \<\xi, \phi(\tar(g_1))^{-1}\cdot (\psi_u^\epsilon)^*\omega(\overline{U}_1, \dots, \overline{U}_q)\>
\end{align*}
Now, \eqref{Lie} follows from the commutation relations on Proposition \ref{commutation}. The identity \eqref{contraction} follows similarly.
\end{proof}

\subsection{The evaluation map}\label{weil}
We shall now describe the chain isomorphism $ev: \Gamma_{\ext}(\bM, \Lambda^{p} \bA^{*}) \to W^{p,q}(\g,C)$. First, for $\alpha \in \Gamma_{\ext}(\bM, \Lambda^{p} \bA^{*}) \subset \Gamma(\bM, \Lambda^p\bA^*)$, define 
$$
\tilde{c}_k(\alpha): (\times^{p-k} \Gamma(\g))\times ( \times^k \Gamma(\g) )\rightarrow C^{\infty}(\times_M^{q} TM \times_M C^*)
$$
as:
$$
\tilde{c}_k(\alpha)(u_1, \dots, u_{p-k}|v_1, \dots, v_k)= \alpha(\Z_{1} v_1, \dots, \Z_{k} v_k, \T u_1, \dots, \T u_{p-k} \vphantom{\frac{1}{1}}).
$$

\begin{lemma}
There exists a map $c_k(\alpha): \times^p \Gamma(\g) \to \Omega^{q-k}(M, C)$ such that 
\begin{equation}\label{c_alpha}
\tilde{c}_k(\alpha) =\f_{c_k(\alpha)} \circ  \pr_{[1,k]}, \,\,\, \forall \,\alpha \in \Gamma_\ext(\bM,\Lambda^p\bA^*_q),
\end{equation}
where $\pr_{[1,k]}: \times_M^q TM\times_M C^* \to \times_M^{q-k} TM\times_M C^*$ is the projection which forgets the first $k$ entries.
\end{lemma}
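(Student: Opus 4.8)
The plan is to show that $\tilde{c}_k(\alpha)(u_1,\dots,u_{p-k}\,|\,v_1,\dots,v_k)$, a priori merely an element of $\C(\bM)$, (i) does not depend on the first $k$ tangent entries of $\bM=\times_M^qTM\times_MC^*$, so that it descends along $\pr_{[1,k]}$, and (ii) that the descended function is simple, skew-symmetric in its remaining $q-k$ tangent entries and $(q-k+1)$-homogeneous, hence lies in the image of $\f$; one then defines $c_k(\alpha)(u\,|\,v)$ to be $\f^{-1}$ of that function. Throughout I use that $\alpha$, being a form for $\bA\to\bM$, is automatically $\C(\bM)$-multilinear and skew in its entries, and that $\alpha\in\Gamma_\ext(\bM,\Lambda^p\bA^*)$ means precisely $h_\l^{\g\,*}\alpha=\l^{q+1}\alpha$, $(0_i^\g)^*\alpha=0$ for $i=1,\dots,q+1$, and $\sigma_\g^*\alpha=sgn(\sigma)\,\alpha$ for $\sigma\in S_q$.

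First I would record how the special sections transform under the structure maps $h_\l^\g,0_i^\g,\sigma_\g$ of $\bA$ and their base maps on $\bM$ (respectively the fibrewise scaling $h_\l^{\bM}$, the zero insertions $0_i^{\bM}$, and the permutations $\sigma_{\bM}$ of the factors of $\bM$). From the formulas defining $\T u$ and $\Z_iv$, together with the fibrewise linearity of $Tu,T0\colon TM\to T\g$ and of $\chi_u$: the section $\T u$ is \emph{linear}, so $h_\l^\g\circ\T u=\T u\circ h_\l^{\bM}$ and $\sigma_\g\circ\T u=\T u\circ\sigma_{\bM}$, and $\T u$ restricted to the locus where the $j$-th tangent entry (resp. the $C^*$-entry) of $\bM$ vanishes factors through $0_j^\g$ (resp. $0_{q+1}^\g$); for $i\le k$ the section $\Z_iv$ is of \emph{core} type, satisfying $\Z_iv\circ h_\l^{\bM}=h_\l^\g\circ\Z_i(\l^{-1}v)$ with $\Z_i(cv)=c\cdot\Z_iv$ for constant $c$ (scaling in $\bA\to\bM$), while $\sigma_\g\circ\Z_iv=\Z_iv\circ\sigma_{\bM}$ for any $\sigma$ fixing $i$, and $\Z_iv$ factors through $0_j^\g$ on the locus where the $j$-th tangent entry ($j\ne i$) vanishes, and through $0_{q+1}^\g$ on the locus $\xi=0$. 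This is a direct but slightly tedious check with the several compatible vector bundle structures on $\bA=\times_\g^qT\g\times_\g\pi^*C^*$.

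From these relations the three needed properties of $\tilde{c}_k(\alpha)(u\,|\,v)$ follow formally. \emph{Skew-symmetry}: for $\sigma\in S_q$ fixing $\{1,\dots,k\}$ one has $\tilde{c}_k(\alpha)(u\,|\,v)\circ\sigma_{\bM}=(\sigma_\g^*\alpha)$ evaluated on the special sections $=sgn(\sigma)\,\tilde{c}_k(\alpha)(u\,|\,v)$. \emph{Simplicity}: if the $j$-th tangent entry ($j\ge k+1$) or the $C^*$-entry of $\bM$ is zero, then all $p$ special sections factor through $0_j^\g$, resp. $0_{q+1}^\g$, so $\tilde{c}_k(\alpha)(u\,|\,v)$ vanishes there. \emph{Homogeneity}: applying $\Z_iv\circ h_\l^{\bM}=h_\l^\g\circ\Z_i(\l^{-1}v)$ and $\T u\circ h_\l^{\bM}=h_\l^\g\circ\T u$, then $h_\l^{\g\,*}\alpha=\l^{q+1}\alpha$, and finally extracting the $k$ constant factors $\l^{-1}$ from the $k$ core-type entries by $\C(\bM)$-linearity of $\alpha$, yields $h_\l^{\bM\,*}\tilde{c}_k(\alpha)(u\,|\,v)=\l^{\,q-k+1}\tilde{c}_k(\alpha)(u\,|\,v)$.

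To finish, I would work in a local frame: homogeneity says $\tilde{c}_k(\alpha)(u\,|\,v)$ is a homogeneous polynomial of degree $q-k+1$ in the fibre coordinates of $\bM$, and simplicity forces every monomial to carry a factor from each of the $q-k+1$ coordinate blocks corresponding to the tangent entries $k+1,\dots,q$ and to $C^*$; by the degree count, each monomial then has degree exactly one in each of those blocks and degree zero in the first $k$ tangent blocks. Hence $\tilde{c}_k(\alpha)(u\,|\,v)=g\circ\pr_{[1,k]}$ for a unique $g\in\C(\times_M^{q-k}TM\times_MC^*)$ which is simple, $(q-k+1)$-homogeneous, and (by the skew-symmetry above) skew in its $q-k$ tangent entries; by the Taylor-type description of the image of $\f$ recalled after \eqref{diff_iso}, $g=\f_{c_k(\alpha)(u\,|\,v)}$ for a unique $c_k(\alpha)(u\,|\,v)\in\Omega^{q-k}(M,C)$, giving \eqref{c_alpha}, and $\R$-multilinearity and smoothness of $c_k(\alpha)$ in $(u\,|\,v)$ are inherited from $\tilde{c}_k(\alpha)$. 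I expect the only real obstacle to be the bookkeeping in the second paragraph — isolating the linear behaviour of $\T u$ from the core behaviour of $\Z_iv$ relative to the appropriate bundle structures on $\bA$; everything after that is formal.
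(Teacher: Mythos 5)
Your proposal is correct and follows essentially the same route as the paper: both rest on how the special sections $\T u$ and $\Z_i v$ intertwine $h_\l^{\g}$, $0_i^{\g}$ and $\sigma_\g$ with the corresponding maps on $\bM$, on the defining properties of $\Gamma_\ext$, and on the polynomial/Taylor characterization of the image of $\f$. The only (harmless) difference is bookkeeping order — the paper first obtains the descent along $\pr_{[1,k]}$ from bilinearity over the two vector bundle structures and then checks homogeneity, simplicity and skew-symmetry of the descended function, whereas you check these on all of $\bM$ first and extract the descent from the degree count.
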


\begin{proof}
The multilinearity with respect to the both vector bundle structures, $\bA \to \bM$ and $\bA \to \g$, implies that
$
\tilde{c}_k(\alpha)(u_1, \dots, u_{p-k}|v_1, \dots, v_k) = F_\alpha \circ \pr_{[1,k]},
$
where $F_\alpha \in C^{\infty}(\times^{q-k}TM\times_M C^*)$ is given by
$$
F_\alpha(y_1, \dots, y_{q-k}, \xi) = \tilde{c}_k(\alpha)(u_1, \dots, u_{p-k}|v_1, \dots, v_k)(\underbrace{0_m, \dots, 0_m}_{k-times}, y_{1}, \dots, y_{q-k}, \xi).
$$
We now have to check that $F_\alpha \in C_{\ext}^\infty(\times^{q-k}TM\times_M C^*)$, i.e. $F$ is $q-k+1$-homogeneous, simple and skew-symmetric.
The homogeneity of $F_\alpha$ follows from the homogeneity of $\alpha$ together with the linearity of the sections $\T u$ and the properties of the section $\Z_jv$: 
$$
\Z_j(v)|_{(0_m, \dots, 0_m, \l y_1, \dots, \l y_{q-k},\l \xi)} = h_\l^\g\left(\Z_j(\frac{1}{\l} v)|_{(0_m,\dots, 0_m, y_1, \dots, y_{q-k}, \xi)}\right), \Z_j(\l v) = \l \cdot \Z_j(v), \,\, \l > 0,
$$
where $\cdot$ stands for the multiplication for $\bA \to \bM$.
%\begin{align*}
%F(\lambda y_1, \dots, \lambda y_{q-k}, \lambda \xi)\circ \pr_{[k+1,q]} & = (h_{\l}^{\g \, *} \alpha)(\Z_1(\frac{1}{\lambda} v_1), \dots, \Z_k(\frac{1}{\lambda} v_k), \T u_1, \dots, \T u_{p-k})|_{(y_1, \dots, y_{q-k},0_m, \dots, 0_m, \xi)}\\
%& = \lambda^{q+1} \alpha(\frac{1}{\lambda} \cdot \Z_1 v_1, \dots, \frac{1}{\l} \cdot \Z_kv_k, \T u_1, \dots, \T u_{p-k})|_{(y_1, \dots, y_{q-k},0_m, \dots, 0_m, \xi)}\\
%& = \lambda^{q-k+1} F(y_1, \dots, y_{q-k}, \xi) \circ \pr_{[k+1,q]}.
%\end{align*}
The simplicity of $F_\alpha$ follows from the identity
\begin{align*}
(F_\alpha \circ 0_i) \circ \pr_{[1,k]} = ((0_{k+i}^\g)^*\alpha)( \Z_1v_1, \dots, \Z_kv_k, \T u_1, \dots, \T u_{p-k}) = 0,
\end{align*} 
for $i=1, \dots, q-k+1$.
Finally, let $\sigma \in S_{q-k} \subset S_q$, seen as the subgroup acting as the identity on $\{1,\dots, k\}$. One can check that
\begin{align*}
(F_\alpha  \circ \sigma_M)\circ \pr_{[1,k]} & = (\sigma_\g^* \alpha)(\Z_1v_1, \dots, \Z_kv_k, \T u_1, \dots, \T u_{p-k})\\
& = sgn(\sigma) \,\alpha(\Z_1v_1, \dots, \Z_kv_k, \T u_1, \dots, \T u_{p-k})\\
& = sgn (\sigma) F_\alpha \circ \pr_{[1,k]}.
\end{align*}
This shows that $F_\alpha \in C^{\infty}_\ext(\times^{q-k} TM\times_M C^*)$ and, therefore, there exists $c_k(\alpha): \times^{p} \Gamma(\g) \to \Omega^{q-k}(M, C)$ such that $F_\alpha= \f_{c_k(\alpha)(u_1, \dots, u_{p-k}| v_1, \dots, v_k)}$.
\end{proof}

Our aim is to prove that 
\begin{equation} \label{eq:evdef}
 ev(\alpha)= (c_0(\alpha), c_1(\alpha), \dots)
\end{equation}
defines a map from $\Gamma_\ext(\bM, \Lambda^p\bA^*)$ into $W^{p,q}(\g, C)$. First note that the sequence $(c_0(\alpha), c_1(\alpha), \dots)$ completely determines $\alpha \in \Gamma_{\ext}(\bM, \Lambda^{p} \bA^{*})$. Indeed, as $\Gamma(\bM, \bA)$ is generated as $C^{\infty}(\bM)$-module by sections of the type $\T u$, $\Z_i v$, any element of $\Gamma(\bM, \Lambda^p\bA^*)$ is determined by its values on these sections. Now, one can check that, for $\alpha \in \Gamma_{\ext}(\bM, \Lambda^{p} \bA^{*})$,
\begin{equation}\label{eq1}
i_{\Z_j v}i_{\Z_j w} \, \alpha =  0, \,\, \text{for} \, j=1, \dots, q
\end{equation}
and, for a permutation $\sigma \in S_q$,
\begin{equation}\label{eq2}
 \alpha(\Z_{\sigma(1)} v_1, \dots, \Z_{\sigma(k)} v_k, \T u_1, \dots, \T u_{p-k}) = sgn(\sigma) \,\sigma_M^*(\alpha(\Z_1 v_1, \dots, \Z_k v_k, \T u_1, \dots, \T u_{p-k})).
\end{equation}
Hence, to recover $\alpha$ from its values on the sections $\T u, \Z_i v$, it suffices to know the values of $\alpha$ encoded on the sequence $(c_0(\alpha), c_1(\alpha), \dots)$. The next result gives the desired proof of Proposition \ref{prop:evaluation}.

\begin{proposition}\label{prop:evaluation2}
Given $\alpha \in \Gamma_{\ext}(\bM, \Lambda^{p} \bA^{*})$, one has that: 
\begin{enumerate}
\item $c_k(\alpha)$ is skew-simmetric on the $u$'s entries.
\item $c_k(\alpha)$ is symmetric on the $v$'s entries.
\item Given $f \in C^{\infty}(M)$,
\begin{align*}
c_k(\alpha)(u_1, \dots,  u_{p-k}| v_1, \dots, f v_k) & = f c_k(\alpha)(u_1, \dots, u_{p-k}| v_1, \dots, v_k)\\
c_k(\alpha)(f u_1, \dots, u_{p-k}| v_1, \dots, v_k) & = f c_k(\alpha)(u_1, \dots, u_{p-k}| v_1, \dots, v_k)\\
& \hspace{-50pt} +  df \wedge c_{k+1}(\alpha)(u_2,\dots, u_{p-k}| v_1, \dots, v_k, u_{1})
\end{align*}
\end{enumerate}
In particular, each $c_k$ can be seen as a $\R$-linear skew-symmetric map $c_k: \times^{p-k} \Gamma(\g) \to \Omega^{q-k}(M, S^{k} \g^* \otimes C)$. Moreover, the map $ev: \Gamma_{\ext}(\bM, \Lambda^{p} \bA^{*}) \to W^{p,q}(\g,C)$ defined by \eqref{eq:evdef} is a right $\Gamma(\Lambda^\bullet \g^*)$-module isomorphism satisfying
$$
ev \circ \mathrm{d}_\ext = \mathrm{d}_W \circ ev.
$$
\end{proposition}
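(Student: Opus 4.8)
The plan is to prove the five assertions in turn, treating (1)--(3) as the algebraic core and then deducing that $ev$ lands in $W^{p,q}(\g,C)$, is an isomorphism, and intertwines the differentials. Throughout I would use the two identities recorded just before the statement: the vanishing \eqref{eq1}, which expresses that $\alpha$ is linear along the $i$-th $T\g$-factor of $\bA\to\g$ (so a double contraction with sections that are vertical in the same factor dies), and the reordering identity \eqref{eq2}, which follows from $\sigma_\g=Lie(\sigma_\G)$ permuting the $T\g$-factors together with $\sigma_\g^*\alpha=sgn(\sigma)\alpha$ and the skew-symmetry of $\alpha$. It is convenient to fix, for each $k$, the locus $Z_k:=\{(x_1,\dots,x_q,\xi)\in\bM:x_1=\dots=x_k=0\}$, on which $c_k(\alpha)$ is by definition computed and on which $\sigma_M$ acts trivially for every $\sigma\in S_k$. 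Skew-symmetry of $c_k(\alpha)$ in $u_1,\dots,u_{p-k}$ is then immediate, since it only permutes the last $p-k$ arguments $\T u_i$ of the alternating form $\alpha$. For symmetry in $v_1,\dots,v_k$ I would transpose two vertical slots of $\alpha$ using its skew-symmetry, rewrite the result by \eqref{eq2} applied to the corresponding transposition in $S_k$, and restrict to $Z_k$, where the resulting $\sigma_M^*$ disappears; this exchanges $v_i\leftrightarrow v_j$ without sign. The $C^\infty(M)$-linearity in the last slot $v_k$ comes from the fact that $\Z_k(fv)$ and $f\cdot\Z_k(v)$ (scalar multiplication in $\bA\to\bM$) differ on $Z_k$ only by terms vertical in slots $j\neq k$, which either vanish against $\Z_j v_j$ by \eqref{eq1} (for $j\le k$) or vanish on $Z_k$ (for $j>k$).

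The essential computation is the Leibniz failure in a $u$-slot, and this is the step I expect to be the main obstacle. The input is the elementary lift formula
\[
T(fu)(x)=f(m)\cdot Tu(x)+\langle df_m,x\rangle\,(u(m))^{\vl},\qquad x\in T_mM,
\]
which says that along $\bM$ the section $\T(fu_1)$ equals $f\cdot\T u_1$ (scalar multiplication in $\bA\to\bM$) plus a correction whose $j$-th $T\g$-component is the function $\langle df,\cdot\rangle$ on the $j$-th $TM$-factor times a vertical-$u_1$ term. Substituting into $\alpha(\Z_1v_1,\dots,\Z_kv_k,\T(fu_1),\T u_2,\dots)$, the first term yields $f\,c_k(\alpha)(u_1,\dots\mid v_1,\dots)$; the correction terms with $1\le j\le k$ kill $\alpha$ by \eqref{eq1}; and the correction terms with $k+1\le j\le q$, after restriction to $Z_k$ and a use of \eqref{eq2} to move the new vertical argument into the $(k{+}1)$-st slot, assemble exactly into $df\wedge c_{k+1}(\alpha)(u_2,\dots\mid v_1,\dots,v_k,u_1)$ --- the weight $\langle df,\cdot\rangle$ in a free $TM$-slot becoming a wedge with $df$ once one passes from functions on $\times^q_{B}TB\times_B C^*$ back to $\Lambda$-valued forms via $\f$. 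Together with (1)--(2) and the $v$-linearity, this shows $c_k(\alpha)\in\Omega^{q-k}(M,S^k\g^*\otimes C)$ and that $ev(\alpha)=(c_0(\alpha),c_1(\alpha),\dots)$ satisfies \eqref{leibniz}, i.e.\ $ev(\alpha)\in W^{p,q}(\g,C)$.

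For bijectivity I would argue that $\Gamma(\bM,\bA)$ is generated over $C^\infty(\bM)$ by the sections $\T u$ and $\Z_i v$, and that by \eqref{eq1}--\eqref{eq2} the value of any $\alpha\in\Gamma_\ext(\bM,\Lambda^p\bA^*)$ on such a tuple is determined by the sequence $(c_0(\alpha),c_1(\alpha),\dots)$, which gives injectivity; conversely, given $c=(c_0,c_1,\dots)\in W^{p,q}(\g,C)$, the same formulas define a $C^\infty(\bM)$-multilinear skew-symmetric $\alpha$ on these generators, and the constraints \eqref{leibniz} are precisely what is needed for consistency, the homogeneity, simplicity and skew-symmetry of $\alpha$ with respect to $\bA\to\g$ being checked directly from the construction; hence $\alpha\in\Gamma_\ext$ and $ev(\alpha)=c$. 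The $\Gamma(\Lambda^\bullet\g^*)$-module statement follows by noting that $\Gamma(\Lambda^\bullet\g^*)\cong\Gamma_{0\text{-}\hom}(\bM,\Lambda^\bullet\bA^*)$ consists of forms pulled back from $\g$, which pair trivially with the $\Z_i$-directions, so expanding $ev(\alpha\wedge\beta)_k$ over $(p-k,p')$-shuffles of the $\T u$-arguments reproduces the module structure on $W^{\bullet,q}(\g,C)$.

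Finally, to prove $ev\circ\mathrm{d}_\ext=\mathrm{d}_W\circ ev$ I would evaluate the Koszul formula for the Chevalley--Eilenberg differential $d$ of $\bA$ on the tuple $\Z_1v_1,\dots,\Z_kv_k,\T u_1,\dots,\T u_{p-k+1}$, using the bracket and anchor relations for the generators: $[\T u,\T u']=\T[u,u']$ and $\rho_\bA(\T u)$ is the tangent-type lift of $\rho(u)$ (reproducing the $\Gamma(\g)$-action on $\Omega^\bullet(M,S^k\g^*\otimes C)$ that enters \eqref{def:d_W}); $[\Z_i v,\T u']=\Z_i[v,u']$ modulo terms killed by \eqref{eq1}; $[\Z_i v,\Z_j w]=0$; and $\rho_\bA(\Z_j v)$ is the vertical lift of $\rho(v)$ in the $j$-th $TM$-factor, acting on forms by the contraction $i_{\rho(v)}$ in that slot. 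The $\T$-bracket and $\rho_\bA(\T u)$-terms then reconstitute $d_{CE}(c_k)$, while the $\rho_\bA(\Z_j v)$-terms give $-\sum_j i_{\rho(v_j)}c_{k-1}$, and the factor $(-1)^k$ in \eqref{def:d_W} is the sign of commuting the $k$ vertical arguments past the alternating sum of the differential --- the ``ordering convention'' alluded to in the text, which is likewise the source of the signs $(-1)^{k(k-1)/2}$ and $(-1)^{\epsilon(\sigma,k)}$ in \eqref{c_omega}. I expect the bulk of the work, and the only real danger, to lie in getting these signs and the $df\wedge c_{k+1}$ term in (3) right; everything else is bookkeeping with the generating sections and the two identities \eqref{eq1}--\eqref{eq2}.
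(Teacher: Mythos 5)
Your proposal is correct and follows essentially the same route as the paper's proof: it rests on the same generators $\T u$, $\Z_i v$, the same two identities \eqref{eq1}--\eqref{eq2}, the same lift formulas $\Z_i(fv)=(f\circ\pi)\cdot\Z_i v$ and $\T(fu)=(f\circ\pi)\cdot\T u+\sum_j(\ell_{df}\circ\pr_j)\cdot\Z_j u$ for the Leibniz term, and the same bracket/anchor relations ($[\T u,\T u']=\T[u,u']$, $[\T u,\Z_j v]=\Z_j[u,v]$, vertical anchors acting by contraction) in the Koszul-formula computation of $ev\circ d=\mathrm{d}_W\circ ev$. The only content you defer --- the explicit sign bookkeeping --- is exactly what the paper's proof carries out in detail, so there is no gap in the approach.
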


\begin{proof}
\paragraph{1.} This follows directly from the skew-symmetry of $\alpha$ with respect to $\bA \to \bM$.
\paragraph{2.} Let $\sigma \in S_k \subset S_{q}$, seen as the subgroup acting as the identity on $\{k+1, \dots, q\}$. From \eqref{eq2} and the skew-symmetry of $\alpha$ with respect to $\bA \to \bM$,
\begin{align*}
\alpha(\Z_{1}v_{\sigma(1)}, \dots, \Z_{k}v_{\sigma(k)}, \T u_1, \dots, \T u_{p-k})& = sgn(\sigma)\, \alpha(\Z_{\sigma(1)}v_{\sigma(1)}, \dots, \Z_{\sigma(k)}v_{\sigma(k)}, \T u_1, \dots, \T u_{p-k})\\
& = (sgn(\sigma))^2 \, \alpha(\Z_{1}v_{1}, \dots, \Z_{k}v_{k}, \T u_1, \dots, \T u_{p-k}).
\end{align*}
In the second equality we have used the fact that $\alpha(\T u_1, \dots, \T u_{p-k}, \Z_{\sigma(1)}v_{\sigma(1)}, \dots, \Z_{\sigma(k)}v_{\sigma(k)}) \in C^{\infty}(\times^q TM \times_M C^*)$ does not depend on the first  $k$ coordinates.
\paragraph{3.}
One can check that 
\begin{align*}
\Z_i(fv) & = (f \circ \pi) \cdot \Z_iv\\
\T(fu) & = (f \circ \pi) \cdot \T u + \sum_{j=1}^q (\ell_{df} \circ \pr_{j}) \cdot \Z_j u
\end{align*}
where all the sums and scalar multiplications are with respect to $\pi: \bA \to \bM$, $\pr_{j}: \times_M^q TM \times_M C^*\to TM$ is the projection onto the $j$-th factor and $\ell_{df} \in C^{\infty}(TM)$ is the linear function corresponding to $df \in \Omega^1(M)$. To simplify notation, we identify $\Omega^{q-k}(M, C)$ with its image on $C^{\infty}(\times^{q-k}TM \times_M C^*)$ under $\f$ in the following. The first equation of (3) is now straightforward to check. As for the second, it follows from \eqref{eq1} and \eqref{eq2} that
\begin{align*}
c_k(\alpha)(fu_1, \dots, u_{p-k}| v_1, \dots, v_k) \circ \pr_{[1,k]}& =  (f \circ \pi) \,\alpha(\Z_1v_1, \dots, \Z_k v_k, \T u_1,\dots, \T u_{p-k}) \\ &  \hspace{-110pt} + \sum_{j=k+1}^{q} (\ell_{df} \circ \pr_{j}) \, \alpha(\Z_1 v_1, \dots, \Z_kv_k, \Z_j u_{1}, \T u_2, \dots, \T u_{p-k})\\
& = (f \circ \pi) c_k(\alpha)(u_1, \dots, u_{p-k}| v_1, \dots, v_k) \circ \pr_{[1,k]} \\
& \hspace{-113pt} + \underbrace{\sum_{j=k+1}^q  (-1)^{j-k-1} (\ell_{df} \circ \pr_{j})\,\alpha( \Z_1 v_1, \dots, \Z_kv_k, \Z_{k+1}u_{1}, \T u_2, \dots, \T u_{p-k}) \circ \sigma^j_M}_{(*)}
\end{align*}
where $\sigma^j \in S_q$ is the cycle $(j \,\, j-1 \,\, \dots k+2 \,\, k+1)$, for $k+1 \leq j \leq q$, which has sign equal to $(-1)^{j-k-1}$. It is now straightforward to check that $(*)$ equals  $df \wedge c_{k+1}(u_2, \dots, u_{p-k}| v_1, \dots, v_k, u_{1})\circ \pr_{[1,k]}$.

It remains to prove that $ev$ is a dg-module isomorphism. Let us first prove that $ev$ commutes with the multiplication. Let $\beta \in \Gamma(\Lambda^{p'}\g^*) \cong \Gamma_{0\mbox{-}\hom}(\bM, \Lambda^{p'} \bA^*)$
and consider $ev(\alpha \wedge \beta) = (c_0(\alpha\wedge\beta), c_1(\alpha\wedge\beta), \dots)$. By definition,
\begin{align*}
c_k(\alpha \wedge \beta)(u_1,\dots, u_{p+p'-k}|v_1, \dots, v_k)\circ \pr_{[1,k]} & = (\alpha \wedge \beta) (\Z_1v_1, \dots, \Z_kv_k, \T {u_1},\dots, \T{u_{p+p'-k}})\\
& \hspace{-150pt}= \hspace{-10pt} \sum_{\sigma \in S(p-k,p')} \hspace{-10pt} sgn(\sigma)\, \alpha(\Z_1v_1, \dots, \Z_kv_k, \T u_{\sigma(1)}, \dots, \T u_{\sigma(p-k)}) \,\beta(\T u_{\sigma(p-k+1)}, \dots \T u_{\sigma(p+p'-k)}).
\end{align*}
where $S(p-k,p')$ is the space of $(p-k,p')$-unshuffles and the last equality follows from the fact that the contraction of $\beta$ with any section of type $\Z_\cdot v_\cdot$ is zero. The result now follows easily.

Finally, to prove that $ev$ intertwines the differential, consider
$$
ev(d\alpha)= (c_0(d\alpha), \dots, c_k(d\alpha), \dots),
$$
where
\begin{align*}
c_k(d\alpha)(u_1, \dots, u_{p+1-k}|v_1, \dots, v_k) \circ \pr_{[1,k]} & = d\alpha(\Z_1v_1, \dots, \Z_kv_k, \T u_1, \dots, \T u_{p+1-k})\\
 & \hspace{-100pt}= \underbrace{\sum_{j=1}^{k} (-1)^{j+1} \Lie_{\rho(\Z_j v_j)} \alpha(\Z_1 v_1, \dots, \widehat{\Z_j v_j}, \dots)}_{(A)} + \, \underbrace{\sum_{i=1}^{p-k+1} (-1)^{i+k+1}\Lie_{\rho(\T u_i)} \alpha(\Z_1 v_1, \dots, \widehat{\T u_i}, \dots)}_{(B)} \\
& \hspace{-80pt}+ \underbrace{\sum_{1 \leq i < j \leq p+1-k} \!\!\!(-1)^{i+j} 
\alpha([\T u_i, \T u_j], \dots, \widehat{Tu_i},\dots, \widehat{Tu_j}, \dots)}_{(C)}\\
&\hspace{-80pt} + \underbrace{\sum_{i=1}^{p+1-k} \sum_{j=1}^k (-1)^{j +(k+i)} \alpha([\Z_jv_j, \T u_i], \dots, \widehat{\Z_jv_j} , \dots, \widehat{\T u_i}, \dots)}_{(D)}.\\
\end{align*}
Notice that there are no terms containing $[\Z_{j_1}v_{j_1}, \Z_{j_2}v_{j_2}]$  since these brackets are all zero. To study the remaining terms, we shall use some properties of the tangent Lie algebroid $T \g \to TM$ (see \cite{Mac-Xu}) and the action algebroid $C^*\times_M \g \to C^*$.

\begin{itemize}
\item[(A)] From \eqref{eq2},
\begin{align*}
\alpha(\Z_1v_1, \dots, \widehat{\Z_jv_j}, \dots, \Z_kv_k, \T u_1, \dots, \T u_{p+k-1}) & \\
& \hspace{-100pt} = (-1)^{k-j} \sigma_M^*(\f_{c_{k-1}(\alpha)(u_1, \dots, u_{p+1-k}|v_1, \dots, v_{j-1}, v_{j+1}, \dots, v_k)} \circ \pr_{[1,k-1]})
\end{align*}
where $\sigma = (j \,\, k) (j \,\, k-1) \dots (j \, j+1) \in S_q$.
Now,
$$
\pr_{[1,k-1]} \circ \sigma_M(x_1,\dots, x_q, \xi) = (x_j, x_{k+1}, \dots, x_q, \xi) \,\, \text{ and } \,\,\rho(\Z_jv_j) = (\rho(v_j)^{\vl,q}_{(j)}, 0)
$$ 
and
$$
\Lie_{(X^{\vl,q}_{(1)},0)} \f_\omega = \f_{i_X\omega} \circ \pr_{(1)}, \,\,\, \forall \,  X \in \mathfrak{X}(M), \, \omega \in \Omega^q(M, C)
$$
where $\pr_{(1)}: \times^q TM \times_M C^* \to \times^{q-1} TM \times_M C^*$ is the projection which forgets the first component. These facts imply that
\begin{align*}
(A) = (-1)^{k+1}\sum_{j=1}^k \left( i_{\rho(v_j)}c_{k-1}(\alpha)(u_1, \dots, u_{p+1-k}|v_1, \dots, v_{j-1}, v_{j+1}, \dots, v_k)\right) \circ \pr_{[1,k]}
\end{align*}
\item[(B)] The fact that $\rho(\T u_i) = (\rho(u_i)^T, \rho(\chi_u))$, where $(\cdot)^T$ stands for tangent lift implies that, for $\omega \in \Omega^q(M, C)$,  
$
\Lie_{\rho(\T u_i)} \f_{\omega} = \f_{u_i\cdot \omega},
$
where $u \cdot (\beta \otimes \mu) = \Lie_{\rho(u)} \beta \otimes \mu + \beta \otimes \nabla_u \mu$,
$\beta \in \Omega^q(M)$, $\mu \in \Gamma(C)$ and $\nabla$ is the $\g$-connection on $C$ defining the representation of $\g$ on $C$. Hence, 
$$
(B) = (-1)^k\sum_{i=1}^{p-k+1} (-1)^{i+1} \left(u_i \cdot c_k(u_1, \dots, \widehat{u_i}, \dots, u_{p+1-k}| v_1, \dots, v_k)\right) \circ \pr_{[1,k]}
$$
\item[\noindent(C,D)] 
\noindent From the identities $[\T u_i, \T u_j] = \T[u_i,u_j]$ and $[\T u_i, \Z_jv_j] = \Z_j[u_i,v_j]$, it is straightforward to check that 
\begin{align*}
(C) & = (-1)^k\sum_{1\leq i_1 < i_j \leq p-k+1}(-1)^{i_1+i_2} \left(c_k(\alpha)([u_{i_1}, u_{i_2}],u_1, \dots, \widehat{u_{i_1}}, \dots, \widehat{u_{i_2}}, \dots, u_{p-k+1}| v_1, \dots, v_k)\right)\circ \pr_{[1,k]}\\
(D) & = (-1)^{k} \sum_{i=1}^{p-k+1}\sum_{j=1}^k (-1)^{i} \left(c_k(\alpha)(u_1, \dots, \widehat{u_i}, \dots, u_{p+1-k} | v_1, \dots, [u_i,v_j], \dots, v_k)\right) \circ \pr_{[1,k]}
\end{align*}
\end{itemize}
Hence,
$$
(A)+(B)+(C)+(D)  = \mathrm{d}_W(c(\alpha))_k(u_1, \dots, u_{p-k+1}|v_1,\dots, v_k) \circ \pr_{[1,k]}
\Longrightarrow 
c(d\alpha)_k = \mathrm{d}_W(c(\alpha))_k,
$$
as we wanted.
\end{proof}

\subsection{The proof of Lemma \ref{tech_lemma}}
\begin{namedthm*}{Lemma \ref{tech_lemma} rephrased}
Let $\omega \in \Omega^q(B_p\G, \tar^*C)$ and consider $\VE_\Omega(\omega)=(c_0(\omega), c_1(\omega), \dots)$ as defined in \eqref{c_omega}. Also, let $\alpha = \VE_{\ext}(\f_\omega) \in \Gamma_{\ext}(\bM, \Lambda^p\bA^*)$ and consider $ev(\alpha) = (c_0(\alpha), c_1(\alpha), \dots)$ defined by \eqref{c_alpha}. Then,
$$
c_k(\omega) = c_k(\alpha), \forall \,\, k \geq 0.
$$
\end{namedthm*}

\begin{proof}
From \eqref{vanest},
\begin{align*}
\f_{c_k(\alpha)(u_1, \dots, u_{p-k}| v_1, \dots, v_k)}\circ \pr_{[1,k]} & = \VE_{\ext}(\f_{\omega})(\Z_1 v_1, \dots, \Z_k v_k, \T u_1, \dots, \T u_{p-k})\\
 & = \sum_{\sigma \in S_p} sgn(\sigma) R_{\chi_{\sigma(1)} }\dots R_{\chi_{\sigma(p)}} \f_\omega,
\end{align*}
where $\chi_i = \Z_i v_i$ (resp. $\T u_{i-k}$), if $i \in \{1, \dots, k\}$ (resp. if $i \in \{k+1, \dots, p\}$). The main ingredients of the proof are the identities from Lemma \ref{lem:BpT}:
\begin{align*}
R_{\T u_i} \f_{\omega} & = \mathbbm{s}_0^* \Lie_{B_p \T u_i}\f_\omega 
 = \f_{R_{u_i} \omega}\\
R_{\Z_iv_i} \f_{\omega} & = (-1)^{i-1}\f_{J_{v_i} \omega} \circ \pr_{(i)}^{p-1,q},
\end{align*}
where $\pr_{(i)}^{\cdot,q}: \times_{B_\cdot\G}^q TB_\cdot\G\times_{B_\cdot\G} \tar^*C^*\to \times_{B_\cdot \G}^{q-1} TB_\cdot\G\times_{B_\cdot\G} \tar^*C^*$ is the projection which forgets the $i$-th component. In the rest of the proof, the difficulty lies in the combinatorics needed to count the number of $(-1)$'s appearing due to the presence of the sections $\Z_i v_i$.

%First, note that $\pr_i^{p,q} = B_p \pr_i^{1,q}$.
%Hence, the diagram below commutes
%$$
%\begin{CD}
%\times_{B_{p-1}\G}^q TB_{p-1}\G @> \pr_i^{p-1,q} >> \times_{B_{p-1}\G}^{q-1} TB_{p-1}\G \\
%@V \mathbbm{s}_0 VV      @VV \mathbbm{s}_0 V\\
%\times_{B_{p}\G}^q TB_{p}\G @> \pr_i^{p,q} >> \times_{B_{p}\G}^{q-1} TB_{p}\G.
%\end{CD}
%$$
%consider $X \in \mathfrak{X}(B_r\G)$. $d\pr_{(i)}^{r,s}(X^{T,s})  = X^{T,s-1} \circ \pr_{(i)}^{r,s}$ and
%\begin{align*}
%d\pr_{(i)}^{r,s}(X^{\vl, s}_{(j)}) & = 
%\begin{cases}
%X_{(j)}^{\vl, s-1} \circ \pr_{(i)}^{r,s} & \text{ if } i > j \vspace{5pt}\\
%X_{(j-1)}^{\vl, s-1} \circ \pr_{(i)}^{r,s}& \text{ if } i < j\\
%0 & \text{ if } i=j.
%\end{cases}
%\end{align*}
Let $0 \leq r \leq p$, $\, 1 \leq s \leq q$  and $1 \leq i,j \leq s$. For $\eta \in \Omega^{s-1}(B_r\G, \tar^*C)$, one can check that
$$
R_{\T u} (\frakf_{\eta} \circ \pr_{(i)}^{r,s})  = \frakf_{R_u \eta} \circ \pr_{(i)}^{r-1,s}
$$ and
\begin{align*}
R_{\Z_jv} (\frakf_{\eta} \circ \pr_{(i)}^{r,s}) & =
 \begin{cases}
(-1)^{j-1} \frakf_{J_v \eta} \circ \pr_{(j)}^{r-1,s-1} \circ \pr_{(i)}^{r-1,s} & \text{ if } i > j \vspace{5pt}\\
 -(-1)^{j-1} \frakf_{J_v \eta} \circ \pr_{(i)}^{r-1,s-1} \circ\pr_{(j)}^{r-1,s}& \text{ if } i < j\\
0 & \text{ if } i=j.
\end{cases}
\end{align*}
%Note that we have used the identity $\pr_{j-1}^{r-1,s-1}\circ \pr_i^{r-1,s} = \pr_i^{r-1,s-1} \circ \pr_j^{r-1,s}$, for $i<j$.
Let us now fix a permutation $\sigma \in S_q$. For $1 \leq l \leq k$,  let $j_l = \sigma^{-1}(l)$, for $l \geq 1$ and set $j_0=0$. Denote by $\tau$ the permutation of $\{0,1,\dots, k\}$ such that $j_{\tau(0)} < \dots < j_{\tau(k)}$.  One can now prove by induction that, for $j_{\tau(l)} \leq r < j_{\tau(l+1)}$,
\begin{equation}
R_{\chi_{\sigma(r+1)}} \dots R_{\chi_{\sigma(p)}} \f_\omega = \delta(k,l)  \left(\f_{D_{\sigma(r+1)} \dots D_{\sigma(p)} \omega} \circ \pr^{r, q-k+l}_{(i_1)} \circ \dots \circ \pr^{r, q}_{(i_{k-l)}} \right),
\end{equation}
where the $D_j$'s are the operators \eqref{def:D}, $\{i_1 < \dots < i_{k-l}\} = \{\tau(l+1), \dots, \tau(k)\}$ and 
$$
\delta(k,l) = (-1)^{k-l} (-1)^{\tau(l+1)+\dots + \tau(k)} (-1)^{N(\tau,l)}
$$ 
with
$$
N(\tau,l) = \#\{(i,j) \in \{l+1, \dots, k\} \times \{l+1, \dots, k\}\,\, | \,\, i < j \, \text{ and } \sigma^{-1}(i) > \sigma^{-1}(j)\}
$$
Note that, for $l=0$,
$$
\delta(k,0) = (-1)^{k}(-1)^{1+\dots + k} (-1)^{\epsilon(\sigma,k)} = (-1)^{\frac{k(k-1)}{2}}(-1)^{\epsilon(\sigma,k)}.
$$
In particular, when $r=0$, we have
\begin{align*}
\f_{c_k(\alpha)(u_1, \dots, u_{p-k}| v_1, \dots, v_k)}\circ \pr_{[1,k]}  & = \sum_{\sigma \in S_p} sgn(\sigma) R_{\chi_{\sigma(1)} }\dots R_{\chi_{\sigma(p)}} \f_\omega\\ 
& = (-1)^{\frac{k(k-1)}{2}} \sum_{\sigma \in S_p} sign(\sigma)(-1)^{\epsilon(\sigma,k)} \f_{D_{\sigma(1)} \dots D_{\sigma(p)} \omega} \circ \pr^{0, q-k}_{(1)} \circ \dots \circ \pr^{0, q}_{(k)} \\
& = \f_{c_k(\omega)(u_1, \dots, u_{p-k}| v_1, \dots, v_k)} \circ \pr_{[1,k]}
\end{align*}
This concludes the proof.

\end{proof}

\end{document}